\documentclass[a4paper,12pt]{article}
\usepackage{graphicx} 
\usepackage{listings}
\usepackage[english]{babel}
\usepackage{geometry}
\usepackage{enumitem} 

\usepackage[utf8]{inputenc}

\usepackage{amsmath,amsthm,amssymb,mathrsfs,titlesec,fancyhdr}
\usepackage{float} 
\usepackage{textcomp}
\usepackage{url}

\usepackage{xcolor, colortbl}
\definecolor{verdet}{rgb}{0.039, .607, .211}

\def\Xint#1{\mathchoice
{\XXint\displaystyle\textstyle{#1}}%
{\XXint\textstyle\scriptstyle{#1}}%
{\XXint\scriptstyle\scriptscriptstyle{#1}}%
{\XXint\scriptscriptstyle\scriptscriptstyle{#1}}%
\!\int}
\def\XXint#1#2#3{{\setbox0=\hbox{$#1{#2#3}{\int}$ }
\vcenter{\hbox{$#2#3$ }}\kern-.6\wd0}}
\def\dashint{\Xint-}

\theoremstyle{plain}
    \newtheorem{lema}{Lemma}[section]

    \newtheorem{coro}{Corollary}[section]
    \newtheorem{teo}{Theorem}[section]
    
\theoremstyle{definition}

\newcommand{\N}{\mathbb{N}}

\newcommand{\R}{\mathbb{R}}

\newcommand{\Capp}{\operatorname{Cap}}

\newcommand{\supp}{\operatorname{supp}}

\providecommand{\norm}[1]{\lVert#1\rVert}

\title{Failure of almost monotonicity of harmonic measure density ratios at
points of vanishing codimension-one density}

\author{Luis Lloret and Xavier Tolsa\thanks{L.L. and X.T. were supported by the European Research Council (ERC) under the European
Union’s Horizon 2020 research and innovation programme (grant agreement 101018680). Also partially
supported by MICIU (Spain) under the grant PID2024-160507NB-I00.}}

\begin{document}
\maketitle 
\begin{abstract}
In this paper, we study the behavior of the density ratios of harmonic measure at points with vanishing density. Given an arbitrary open set $\Omega\subset\R^{n+1}$ with harmonic measure $\omega$, we show that at $\omega$-almost every point $x\in\partial\Omega$ where $\liminf_{r\to0}\frac{\omega(B(x,r))}{r^n}=0$, the density ratio $\frac{\omega(B(x,r))}{r^n}$ is not almost monotone with respect to the radius $r$, and therefore exhibits arbitrarily large oscillations at small scales.
\end{abstract}

\maketitle

\section{Introduction}
This paper is devoted to the study of the metric and geometric properties of harmonic measure, a central object in potential theory and in the analysis of the Dirichlet problem. In the planar case, in 1988 Jones and Wolff \cite{JW} proved that for an arbitrary open set, the dimension of harmonic measure is at most one. That is, there exists a subset of the boundary with Hausdorff dimension at most one having full harmonic measure. Later on, this result was sharpened by Wolff \cite{W2}, who proved the existence of a set with $\sigma$-finite one-dimensional Hausdorff measure carrying full harmonic measure. 

The results of Jones-Wolff and Wolff play a fundamental role in the study of the dimension of harmonic measure in $\mathbb{R}^2$. 
For example, as a consequence of Wolff's theorem, the set of points with vanishing one-dimensional density has zero harmonic measure, that is
\begin{equation*}
\omega\left(\left\{x\in\mathbb{R}^2:\lim_{r\to0}\frac{\omega(B(x,r))}{r}=0\right\}\right)=0.
\end{equation*}
See Section \ref{sec4} below for the detailed argument.

The situation changes in higher dimensions. Bourgain \cite{B} proved that there exists a constant $c_{n+1}>0$, depending only on the dimension, such that
\[
\dim_{\mathcal{H}}\omega\leq n+1-c_{n+1}.
\]

On the other hand, in $\mathbb{R}^3$, Wolff \cite{W} constructed a domain whose harmonic measure is supported on a set of Hausdorff dimension strictly larger than $2$, showing that necessarily $c_3<1$ in this case. In particular, denoting 
\begin{equation}\label{eq:defi_Z}
    \Theta_{\omega}^n(x,r)=\frac{\omega(B(x,r))}{r^n},\qquad Z=\left\{x\in\mathbb{R}^{n+1}:\lim_{r\to0}\Theta_{\omega}^n(x,r)=0\right\},
\end{equation}
Wolff's example satisfies $\omega(Z)>0$. Despite their importance, the metric and geometric properties of such exceptional sets have received little attention, largely because very few explicit examples are known.

The main purpose of this paper is to establish the following result:

\begin{teo}\label{teo:main_result}
Let $\Omega\subset\mathbb{R}^{n+1}$ be an open set with compact boundary, let $p\in\Omega$, and let $\omega_\Omega^p$ be the harmonic measure in $\Omega$ with pole at $p$ and consider the set $Z$ defined in \eqref{eq:defi_Z} (with $\omega=\omega^p_\Omega$). Then
\begin{equation*}
\limsup_{R\to0}\sup_{0<r<R}\frac{\Theta_{\omega_\Omega^p}^n(x,r)}{\Theta_{\omega_\Omega^p}^n(x,R)}=\infty\qquad\text{for }\omega_\Omega^p\text{-almost every }x\in Z.
\end{equation*}
\end{teo}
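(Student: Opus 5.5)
We argue by contradiction. If the conclusion fails, there are a constant $C_0>1$, a radius $R_0>0$ and a set $E\subset Z$ with $\omega(E)>0$ (where $\omega=\omega^p_\Omega$) such that
\begin{equation*}
\Theta^n_\omega(x,r)\le C_0\,\Theta^n_\omega(x,R)\qquad\text{for all }x\in E,\ 0<r<R<R_0 .
\end{equation*}
This comes from a countable exhaustion over $C_0\in\N$ and $R_0=1/k$. Equivalently, $\omega(B(x,sR))\le C_0 s^n\,\omega(B(x,R))$ for $0<s<1$. By Lebesgue density we may also assume every $x\in E$ is an $\omega$-density point of $E$. The plan is to pass to tangent measures of $\omega$ at a typical point of $E$ and show that the almost monotonicity forces these tangents to be ``$n$-dimensional in a uniform way''. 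This should contradict $x\in Z$, combined with the known structure of tangents of harmonic measure.

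\textbf{Choice of scales.} Fix $x\in E$. Since $\Theta^n_\omega(x,r)\to0$, the function $m(r)=\inf_{r\le s\le R_0}\Theta^n_\omega(x,s)$ also tends to $0$. Hence there are $r_k\to0$ with $\Theta^n_\omega(x,r_k)\le 2\inf_{s\ge r_k}\Theta^n_\omega(x,s)$; these are ``record minimum'' scales. Consider $\mu_k=\omega(B(x,r_k))^{-1}\,T_{x,r_k}\#\omega$ and a weak limit $\mu$ along a subsequence.
\begin{itemize}
\item The choice of $r_k$ gives the lower bound $\mu(\bar B(0,s))\ge \tfrac12 s^n$ for $s\ge1$.
\item The hypothesis gives the upper bound $\mu(B(0,s))\le C_0 s^n$ for $s\le1$, and more generally the ratio inequality $\mu(B(0,s))/s^n\le C_0\,\mu(B(0,t))/t^n$ for all $s<t$.
\end{itemize}
Using Preiss' principle that tangents of tangents are tangents, together with the density-point property of $E$, the same ratio inequality should hold at $\mu$-a.e.\ $y\in\supp\mu$, not only at the origin. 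Re-selecting record-minimum scales at such $y$ then yields a tangent $\nu$ with two-sided bounds
\begin{equation*}
c\,s^n\le \nu(B(y,s))\le C s^n
\end{equation*}
at many points and scales, i.e.\ an essentially $n$-AD-regular measure.

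\textbf{Structure of the limit.} I would invoke the known blow-up theory for harmonic measure (Azzam--Mourgoglou--Tolsa--Volberg type arguments). At $\omega$-a.e.\ point, tangent measures are harmonic measures, possibly with pole at infinity, of a limiting open set $\Omega_\infty$, with Green function $u\ge0$ and $\nu=\Delta u$ in the distributional sense. With $n$-AD-regularity available, the rectifiability and flatness results for harmonic measure should force $\nu$ to be flat, namely $c\,\mathcal H^n|_L$ for a hyperplane $L$. Having flat tangent measures at $\omega$-a.e.\ point of $E$ should in turn imply that $\omega|_E$ is rectifiable, and then that $\omega|_E\ll\mathcal H^n$ with $0<\Theta^n_\omega(x)<\infty$ for $\omega$-a.e.\ $x\in E$. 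This contradicts $E\subset Z$, which requires $\Theta^n_\omega(x)=0$.

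\textbf{Main obstacle.} The main obstacle is the step from the pointwise, ratio-type control to a genuinely $n$-AD-regular tangent. The delicate points are the lower bound at small scales and the bounds at points other than the origin, together with the subsequent passage from flat tangents back to positive $n$-density of $\omega$ on $E$. If the second step is not available off the shelf, I would instead try a direct Green function estimate. For $y\in\Omega_\infty$ at distance about $r$ from the boundary, $\omega(B(x,r))\approx r^{n-1}u(y)$. I would use it to show that two-sided $n$-growth of $\nu$ forces linear growth of $u$, and that this linear growth, via a monotonicity formula of Alt--Caffarelli--Friedman type, prevents the densities from decaying to $0$.
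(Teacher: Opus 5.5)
Your argument breaks at the last step, where you say that flat tangent measures at $\omega$-a.e.\ point of $E$ force $\omega|_E$ to be rectifiable with $0<\Theta^n_\omega(x)<\infty$. The tangent-measure characterization of rectifiability (Preiss; see \cite[Ch.~16]{MAT}) assumes $0<\Theta^{n,*}(\omega,x)<\infty$, which is exactly what fails on $Z$. Without that assumption the implication is false. Take the natural probability measure $\mu$ on a planar snowflake curve built with angles $\theta_j\to0$ and $\sum_j\theta_j^2=\infty$. It has flat tangents at $\mu$-a.e.\ point. At the construction scales it satisfies $\Theta^1_\mu(x,\ell_k)\approx\prod_{j<k}\cos^2(\theta_j/2)$, which is almost increasing in the scale and tends to $0$. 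Having zero density, $\mu$ vanishes on every rectifiable set. So an almost monotone ratio, vanishing density and flat tangents can coexist. Moreover, your blow-ups are normalized by $\omega(B(x,r_k))^{-1}$, which erases the vanishing density from every tangent, so qualitative information about tangents alone cannot give the contradiction. You need a quantitative statement at one fixed scale, and that is what the paper uses. Theorem~\ref{teo:main1}, a variant of \cite[Th.~1.5]{T}, gives a rectifiable $\Gamma$ with $\omega(\Gamma\cap 2B)\ge\tau\,\omega(B)$ for a suitable ball $B$ near a density point of $E_{m,\beta}$. This contradicts $\omega(\Gamma\cap Z)=0$ directly, without ever passing through positive density.

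Your intermediate steps are also unsupported, and they leave out the ingredients that carry the paper's proof.
\begin{itemize}
\item Identifying tangents as $\Delta u$ for a limiting Green function requires $G^p_\Omega\lesssim\omega(8B)/r(B)^{n-1}$ near the boundary, i.e.\ a capacity lower bound that a general domain does not have. The paper's substitute is Lemma~\ref{lema:densidad_acotada_cdc_local}, which obtains it from almost monotonicity via Frostman's lemma on doubling cubes mostly contained in $E_{m,\beta}$.
\item AD-regularity of a one-phase harmonic measure does not force flatness (consider harmonic measure with pole at infinity above a smooth non-flat graph), and you cite no result that does.
\item The paper uses $x\in Z$ positively through the identity \eqref{eq:explicit_formula_riesz}, $\mathcal R\omega^p(x)=\frac{x-p}{|x-p|^{n+1}}$ for $\omega^p$-a.e.\ $x\in E_{m,\beta}$. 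Combined with $P_\gamma$-doubling, this gives the small-oscillation hypothesis c).
\item Your plan has no counterpart to the nondegeneracy hypothesis b): a ball $B_1$ centered in $B$ with $\delta_0r(B)\le r(B_1)\le\delta_1r(B)$ and $\Theta^n_\omega(B_1)\ge\alpha\,\Theta^n_\omega(B)$. The paper gets it by the touching-point argument in the modified domain $\tilde\Omega$, and this is the core of the proof.
\end{itemize}
A minor point: at your record-minimum scales nothing bounds $\mu_k(B(0,s))$ for $s>1$, so the blow-ups need not be locally finite. You should blow up along the doubling scales of Lemma~\ref{lema:bolas_P_doblantes_medida_armonica}. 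The record-minimum selection is redundant in any case, since your hypothesis already gives $\Theta^n_\omega(x,r)\le C_0\inf_{r\le s<R_0}\Theta^n_\omega(x,s)$.
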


Recall that given an open interval $I\subset\mathbb{R}$, a function $g:I\to\mathbb{R}$ is said to be \emph{almost increasing} if there exists a constant $C>0$ such that
\begin{equation*}
g(r)\le Cg(s)\qquad\text{whenever }r\leq s,\qquad r,s\in I.
\end{equation*}
Similarly, $g$ is \emph{almost decreasing} if
\begin{equation*}
g(s)\leq Cg(r)\qquad\text{whenever }r\leq s,\qquad r,s\in I,
\end{equation*}
and it is called \emph{almost monotone} if it is almost increasing or almost decreasing.

Theorem \ref{teo:main_result} shows that, for $\omega_\Omega^p$-almost every $x\in Z$, the density ratio function
\begin{equation*}
r\mapsto\Theta_{\omega_\Omega^p}^n(x,r)
\end{equation*}
fails to be almost monotone in any interval $(0,r_0)$ for any $r_0>0$. In particular, although the density tends to zero as $r\to0$, the density ratios exhibit oscillations of arbitrarily large amplitude at sufficiently small scales. We emphasize that this phenomenon cannot occur when harmonic measure is mutually absolutely continuous with respect to the $n$-dimensional Hausdorff measure. Indeed, if $E\subset \partial\Omega$ is such that $\omega|_E$ and the Hausdorff measure $\mathcal H^n|_E$ are mutually absolutely continuous, then $E$ is $n$-rectifiable by \cite{T1}. By the Radon–Nikodym theorem together with the density theorem for rectifiable sets, this implies that $ \lim_{r\to0}\Theta_{\omega}^n(x,r)$ exists and it is positive $\omega$-a.e.\ in $E$ (see \cite[Chapter 17]{MAT}). So the density ratio $\Theta_{\omega}^n(x,r)$ is both almost increasing and almost decreasing near $0$.
Thus, Theorem 1.1 can be interpreted as a converse, in a qualitative sense, to the asymptotic regularity enjoyed by harmonic measure on rectifiable pieces of the boundary.

The main new ingredient to prove Theorem \ref{teo:main_result} is a rectifiability criterion due to the second author of this paper \cite[Th.~1.5]{T}, which provides sufficient conditions for a Radon measure to charge a rectifiable set. Following \cite[Th.~3.3]{AMT}, in Theorem \ref{teo:main1} below, we first prove a variant of that rectifiability criterion which is more suitable for our purposes. This result constitutes the main technical tool in the proof of Theorem \ref{teo:main_result}.

The argument proceeds by contradiction. We assume that the density ratios are almost monotone on a subset of $Z$ of positive harmonic measure, and then we show that this assumption yields a ball for which the harmonic measure satisfies all the hypotheses of Theorem \ref{teo:main1}. Consequently, one obtains an $n$-rectifiable set carrying positive harmonic measure contained in $Z$. This contradicts the fact that harmonic measure vanishes on every $n$-rectifiable subset of $Z$, thereby proving Theorem \ref{teo:main_result}.

From Theorem \ref{teo:main_result} we get the following apparently stronger result.

\begin{coro}\label{coro1}
Let $\Omega\subset\mathbb{R}^{n+1}$ be an open set with compact boundary, let $p\in\Omega$, and let $\omega_\Omega^p$ be the harmonic measure in $\Omega$ with pole at $p$ and consider the set 
\begin{equation*}
Z_*=\left\{x\in\mathbb{R}^{n+1}:\liminf_{r\to0}\Theta_{\omega^p_\Omega}^n(x,r)=0\right\}.
\end{equation*}
 Then
\begin{equation*}
\limsup_{R\to0}\sup_{0<r<R}\frac{\Theta_{\omega_\Omega^p}^n(x,r)}{\Theta_{\omega_\Omega^p}^n(x,R)}=\infty\qquad\text{for }\omega_\Omega^p\text{-almost every }x\in Z_*.
\end{equation*}
\end{coro}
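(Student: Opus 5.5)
Write $\omega=\omega^p_\Omega$ and $\Theta(x,r)=\Theta^n_\omega(x,r)$. The plan is to split $Z_*$ into $Z$ and $Z_*\setminus Z$ and handle the two pieces separately. On $Z$, Theorem~\ref{teo:main_result} already gives the conclusion for $\omega$-almost every point. So it suffices to prove the conclusion at every point $x\in Z_*\setminus Z$, that is, at every $x$ with
\[
\liminf_{r\to0}\Theta(x,r)=0<\limsup_{r\to0}\Theta(x,r).
\]
At such points the claim is elementary and needs no property of harmonic measure at all.

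Fix $x$ with $\liminf_{r\to0}\Theta(x,r)=0$ and $L:=\limsup_{r\to0}\Theta(x,r)>0$, where $L$ may be $+\infty$. Choose a finite number $0<a<L$. Then there are arbitrarily small radii $r$ with $\Theta(x,r)>a$.

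Given $M>0$ and $\delta>0$, I would argue in three steps.
\begin{enumerate}
\item Pick $r_1<\delta$ with $\Theta(x,r_1)>a$.
\item Since the $\liminf$ vanishes, there are radii tending to $0$ along which $\Theta(x,\cdot)\to0$. I do not need to worry about the order in which these radii occur, because a sequence going to zero gives arbitrarily small values for all small radii in that sequence.
\item Rather than choosing $R<r_1$ with $\Theta(x,R)$ small, which would put the large value at the wrong scale, I choose the small value at the larger radius. Since $\liminf_{r\to0}\Theta(x,r)=0$, there are radii $R<\delta$ with $\Theta(x,R)<a/M$. Then I choose $r<R$ with $\Theta(x,r)>a$, which is possible because $\limsup_{s\to0}\Theta(x,s)>a$ forces large values below any scale.
\end{enumerate}
For these choices,
\[
\sup_{0<s<R}\frac{\Theta(x,s)}{\Theta(x,R)}\ge \frac{\Theta(x,r)}{\Theta(x,R)}>M.
\]
If $\Theta(x,R)=0$, the ratio is $+\infty$ under the natural convention, provided $\omega(B(x,r))>0$ for some $r<R$; this holds here because $\Theta(x,r)>a>0$.

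Since $\delta$ and $M$ are arbitrary, we get $\limsup_{R\to0}\sup_{0<r<R}\Theta(x,r)/\Theta(x,R)=\infty$ at every point of $Z_*\setminus Z$. Combining this with Theorem~\ref{teo:main_result} on $Z$ gives the corollary.

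I do not expect a genuine obstacle. The only care needed is in ordering the scales: the small value must be taken at $R$ and the large value at some $r<R$, which is legitimate because $\limsup_{r\to0}\Theta(x,r)>0$ gives large values below every scale. A minor point is the convention for the ratio when $\Theta(x,R)=0$, which I treat as $+\infty$ as explained above.
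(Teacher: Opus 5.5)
Your proof is correct and is essentially the paper's argument in contrapositive form. The paper shows that if $c_x<\infty$ and $\liminf_{r\to0}\Theta(x,r)=0$, then $\Theta(x,r)\le 2c_x\Theta(x,r_{k_0})\to0$, so the exceptional sets in $Z_*$ and in $Z$ coincide. You instead show directly that every point of $Z_*\setminus Z$ has infinite ratio, and then apply Theorem~\ref{teo:main_result} on $Z$.

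One small remark: the case $\Theta(x,R)=0$ that you guard against cannot occur, because $\omega(B(x,R))\ge\omega(B(x,r))>0$ whenever $r<R$ and $\Theta(x,r)>a$. Also, your steps 1 and 2 are never used.
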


Observe that the assumption that $\lim_{r\to0}\Theta_{\omega^p}^n(x,r)=0$ in $Z$ has been replaced by the weaker condition asserting that $\liminf_{r\to0}\Theta_{\omega^p}^n(x,r)=0$ in $Z_*$.

The paper is organized as follows. In Section~2 we show how one deduces Corollary \ref{coro1} from the main Theorem \ref{teo:main_result}. We also introduce some notation, recall the necessary background material, and establish several auxiliary lemmas to prove the main theorem. 
Section~3 is devoted to the proof of Theorem \ref{teo:main1}. Roughly speaking,  that theorem asserts the following: suppose that we have a measure $\mu$ and a ball $B_0\subset\mathbb{R}^{n+1}$ for which there exists a large subset $G\subset3 B_0$ satisfying:
\begin{enumerate}[label=(\roman*)]
    \item Suitable polynomial growth conditions of $\mu$ in $G$ and boundedness of its Riesz transform in terms of the density ratio $\Theta_\mu(B_0)$.
    \item  Existence of a ball $B_1$ contained in $B_0$, sufficiently small relative to $B_0$, with  $\Theta_\mu(B_1)\geq c\,\Theta_\mu(B_0)$, for some fixed $c>0$.
    \item Small $L^2(\mu)$ oscillation of the Riesz transform $\mathcal{R}_\mu$ in $G$.
\end{enumerate}
Then a piece of $B_0\cap G$ is contained in an $n$-rectifiable subset.

In Section~4 we reduce the proof of Theorem \ref{teo:main_result} to the case where $\Omega\subset\mathbb{R}^{n+1}$ is Wiener regular for $n\geq2$. Finally, Section~5 contains the proof of the main theorem. There we verify the hypotheses of Theorem \ref{teo:main1} for $\mu$ being the harmonic measure and a ball $B_0$ containing a subset of $Z$ of very large harmonic measure. The condition (i) above is established using the almost monotonicity assumption together with the connection between the Riesz transform and the Green function. To verify the second condition, we develop a touching-point argument after modifying the boundary of the domain by means of suitable ``good'' cubes from the David--Mattila dyadic lattice. This is the  most delicate part of the whole argument. The condition (iii) above is proved by arguments similar to those used for the first one, using again the connection of the Riesz transform with the Green function. The final  contradiction obtained at the end of the section completes the proof of Theorem \ref{teo:main_result}.

\section{Preliminaries}
    The notation $a\lesssim b$ means that there exists some constant $C$ such that $a\leq C b$. Also, $a\approx b$ means $a\lesssim b\lesssim a$. If one wants to specify the dependence of $C$ on some parameter $\lambda$, then one writes $a\lesssim_\lambda b$, and $a\approx_\lambda b$ respectively. On the other hand, if $B=B(x,r)$ is an open ball and $\lambda>0$, then $\lambda B:=B(x,\lambda r)$.

\subsection{Proof of Corollary \ref{coro1} using Theorem \ref{teo:main_result}}

Let $\Omega\subset\R^{n+1}$ and $Z$, $Z_*$ be as in Theorem \ref{teo:main_result} and Corollary
\ref{coro1}. By Theorem \ref{teo:main_result}, the set
\begin{equation*}
    \tilde Z := \Big\{x\in Z\cap\supp\omega^p: 
\limsup_{R\to0}\sup_{0<r<R}\frac{\Theta_{\omega_\Omega^p}^n(x,r)}{\Theta_{\omega_\Omega^p}^n(x,R)}<\infty\Big\}
\end{equation*}
satisfies $\omega^p(\tilde Z)=0$. 
We claim that $\tilde{Z}$ coincides with the set
\begin{equation*}
\tilde Z_* := 
\Big\{x\in Z_*\cap\supp\omega^p: 
\limsup_{R\to0}\sup_{0<r<R}\frac{\Theta_{\omega_\Omega^p}^n(x,r)}{\Theta_{\omega_\Omega^p}^n(x,R)}<\infty\Big\},
\end{equation*}
and so $\omega^p(\tilde Z_*)=0$, which proves the corollary.

We now prove the claim. Indeed, obviously we have $\tilde Z\subset \tilde Z_*$. For the converse inclusion, consider $x\in \tilde Z_*$ and
denote 
\begin{equation*}
c_x=\limsup_{R\to0}\sup_{0<r<R}\frac{\Theta_{\omega_\Omega^p}^n(x,r)}{\Theta_{\omega_\Omega^p}^n(x,R)}.
\end{equation*}Let $\{r_k\}_{k\in\mathbb{N}}$ be a decreasing sequence of radii such that $\lim_{k\to\infty}\Theta_{\omega^p}^n(x,r_k)=0$. For a given $\varepsilon>0$, let $k_0$ be such that, for $k\geq k_0$,
\begin{equation*}
\Theta_{\omega_\Omega^p}^n(x,r_k)\leq \varepsilon
\end{equation*}
and
\begin{equation*}
\sup_{0<r<r_k}\frac{\Theta_{\omega_\Omega^p}^n(x,r)}{\Theta_{\omega_\Omega^p}^n(x,r_k)}\le 2c_x.
\end{equation*}
Then, for $0<r\leq r_{k_0}$, it holds that
\begin{equation*}
\Theta_{\omega_\Omega^p}^n(x,r)\leq 2c_x\Theta_{\omega_\Omega^p}^n(x,r_{k_0})\leq 2c_x\varepsilon,
\end{equation*}
which proves the claim.

\subsection{Doubling conditions, Riesz transforms, and capacity}
Given a ball $B=B(x,r)\subset\mathbb{R}^{n+1}$, a Radon measure $\mu$ in $\mathbb{R}^{n+1}$, and $\gamma>0$, we denote
\begin{equation*}
    \Theta_{\mu}^n(x,r)=\Theta_{\mu}^n(B)=\dfrac{\mu(B)}{r(B)^n},\quad P_{\gamma,\mu}(B)=\sum_{j\geq0}2^{-j\gamma}\Theta^n_{\mu}(2^jB).
\end{equation*}
We call $\Theta^n_{\mu}(B)$ the $n$-dimensional density ratio of $\mu$ on $B$ and $P_{\gamma,\mu}(B)$ the $(n,\gamma)$-Poisson density of $\mu$ on $B$. Observe that $P_{\gamma,\mu}(B)\geq \Theta^n_{\mu}(B)$.
 We consider the maximal operator
\begin{equation*}
    \mathcal{M}_n\mu(x)=\sup_{r>0}\Theta_\mu^n(x,r) = \sup_{r>0}\frac{\mu(B(x,r))}{r^n}.
\end{equation*}
Given a constant $C>0$, we say that a ball $B$ is $(C,P_{\gamma,\mu})$-doubling if $P_{\gamma,\mu}(B)\leq C\Theta^n_{\mu}(B)$,
and $C$-doubling if $\Theta^n_{\mu}(2B)\leq C\Theta^n_{\mu}(B)$.

For a Radon measure $\mu$ in $\mathbb{R}^{n+1}$, we consider the Riesz transform 
\begin{equation*}
    \mathcal{R}\mu(x)=\int\dfrac{x-y}{|x-y|^{n+1}}\,d\mu(y),
\end{equation*}
whenever this vectorial integral makes sense. For $\varepsilon>0$, we also consider the truncated version
\begin{equation*}
    \mathcal{R}_\varepsilon\mu(x)=\int_{|x-y|>\varepsilon}\dfrac{x-y}{|x-y|^{n+1}}\,d\mu(y).
\end{equation*}
We can also define the maximal version as
\begin{equation*}
    \mathcal{R}_\ast\mu(x)=\sup_{\varepsilon>0}|\mathcal{R}_\varepsilon\mu(x)|.
\end{equation*}

For $f\in L^1_{loc}(\mu)$ and $A\subset\mathbb{R}^{n+1}$, we write
\begin{equation*}
    m_{\mu, A}(f)=\dfrac{1}{\mu(A)}\int_{A} f\,d\mu=\dashint_Af\,d\mu.
\end{equation*}
On the other hand, for $n\geq2$, the fundamental solution of the Laplacian equals
\begin{equation*}
    \mathcal{E}(x)=\dfrac{1}{\kappa_{n+1}|x|^{n-1}},
\end{equation*}
where $\kappa_{n+1}=(n-1)(n+1)\mathcal{L}^{n+1}(B(0,1))$ and $\mathcal{L}^{n+1}$ is the usual Lebesgue measure in $\mathbb{R}^{n+1}$. We can now define the Newtonian potential of $\mu$ as
\begin{equation*}
    U_\mu(x)=\int\mathcal{E}(x-y)\,d\mu(y).
\end{equation*}
Given a compact set $E\subset\mathbb{R}^{n+1}$, its Newtonian capacity is
\begin{equation*}
    \Capp(E)=\sup_{\mu\in M_+(E)}\left\{\mu(E)~:~U_\mu(y)\leq 1~\text{for all }y\in\mathbb{R}^{n+1}\right\},
\end{equation*}
where $M_+(E)$ is the set of non-negative Radon measures supported on $E$ (see \cite[Chapter II]{La} or \cite[Chapter 6]{HM}). Furthermore, an open set $\Omega\subset\mathbb{R}^{n+1}$ satisfies the capacity density condition, usually abbreviated by CDC, if there exists a constant $c>0$ such that
\begin{equation*}
    \Capp(\overline{B(x,r)}\setminus\Omega)\geq cr^{n-1}
\end{equation*}
for all $0<r<\operatorname{diam}(\partial\Omega)$ and all $x\in\partial\Omega$. 

\subsection{Harmonic measure}

Most of the results of this section can be found in \cite{HM}.

For an open set $\Omega\subset \mathbb{R}^{n+1}$ with compact boundary and $f\in\mathcal{C}(\partial\Omega)$,
the Dirichlet problem consists in finding $F\in\mathcal{C}^2(\Omega)\cap \mathcal{C}(\overline{\Omega})$ such that
\begin{equation*}
    \left\{\begin{aligned}
\Delta F=0 &\quad\text{in }\Omega,\\
F=f&\quad\text{on }\partial\Omega.
\end{aligned}\right.
\end{equation*}
When $\Omega$ is unbounded and $n\geq 2$, we also require 
\begin{equation*}
    \lim_{x\in\Omega,~|x|\to\infty}F(x)=0.
\end{equation*}
One says that $\Omega$ is Wiener regular if this problem has a solution for all $f\in\mathcal{C}(\partial\Omega)$. 

For an arbitrary open set $\Omega$ with compact boundary which may be non-Wiener regular, one can consider the so called ``Perron solution" of the Dirichlet problem, which is obtained by the Perron method. This solution coincides with the classical one when $\Omega$ is Wiener regular. In any case, for all $p\in\Omega$, by the Riesz representation theorem, there exists a unique Radon measure $\omega^p_\Omega$ such that the Perron solution of the Dirichlet problem satisfies the following equality:
\begin{equation*}
    F(p)=\int_{\partial\Omega}f(x)\,d\omega^p_\Omega(x).
\end{equation*}
The measure $\omega^p_\Omega$ is called harmonic measure of $\Omega$ with pole $p$. When $\Omega$ is bounded, this is a probability measure. Moreover, if $A\subset\partial\Omega$ is a Borel set, $x\mapsto\omega^x_\Omega(A)$ is a harmonic function in $\Omega$.

On the other hand, if $\Omega$ is Wiener regular, we can define the \textit{Green function} for $\Omega$  with pole at $x$, $G^x_\Omega:\Omega\setminus \{x\}\to\R$, as
\begin{equation}\label{eq:Green_func}
        G_\Omega^x(y)=\begin{cases}
    \mathcal{E}(x-y)-\int_{\partial\Omega}\mathcal{E}(x-z)d\omega_{\Omega}^y(z) &\quad\text{in }\Omega\setminus\{x\},\\
    0&\quad\text{in }\mathbb{R}^{n+1}\setminus\Omega.
\end{cases}
\end{equation}
In the case $n\geq2$, the identity also holds if $\Omega$ is an unbounded open with compact boundary. Further, by the same arguments as in \cite[L.~8.4]{HM}, for arbitrary Wiener regular open sets with compact boundaries, one can show the identity 
\begin{equation*}
G_\Omega^x(y)=\mathcal{E}(x-y)-\int_{\partial\Omega}\mathcal{E}(x-z)d\omega_{\Omega}^y(z)\quad \text{for $\mathcal{L}^{n+1}$-a.e. $x\in\mathbb{R}^{n+1}$.}
\end{equation*}

\begin{lema}\label{lema:formula_Green_medida_armonica}\cite[L.~8.13]{HM}
    Let $\Omega\subset\mathbb{R}^{n+1}$ be an open Wiener regular set with compact boundary. For all $x\in\Omega$ and all $\varphi\in\mathcal{C}^\infty_c(\mathbb{R}^{n+1})$, we have
    \begin{equation*}
        \int_{\partial\Omega}\varphi(y)\,d\omega^x(y)-\varphi(x)=\int_\Omega\Delta\varphi(y)G_\Omega^x(y)\,dy.
    \end{equation*}
\end{lema}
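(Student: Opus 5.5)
The identity will come from Green's second identity applied to $\varphi$ and the Green function, after replacing $\Omega$ by an exhaustion of smooth domains. The Green function is not smooth across $\partial\Omega$ and has a singularity at $x$, so the computation has to be organized to avoid both.

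Fix $x\in\Omega$ and $\varphi\in\mathcal{C}^\infty_c(\mathbb{R}^{n+1})$. The first step is to use the formula \eqref{eq:Green_func} to rewrite the right-hand side as
\[
\int_\Omega \Delta\varphi(y)\,\mathcal{E}(x-y)\,dy-\int_\Omega\Delta\varphi(y)\int_{\partial\Omega}\mathcal{E}(x-z)\,d\omega^y(z)\,dy .
\]
The natural route is to show that $G^x_\Omega$ is a distributional solution of $-\Delta G^x_\Omega=\delta_x-\omega^x$ in all of $\mathbb{R}^{n+1}$, with $G^x_\Omega$ extended by $0$ outside $\Omega$. Testing this against $\varphi$ gives exactly the claimed formula. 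So the task is to compute the distributional Laplacian of $G^x_\Omega$.

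I will do this by approximation.
\begin{enumerate}[label=(\arabic*)]
\item Take an exhaustion $\Omega_k\uparrow\Omega$ by smooth bounded domains with $x\in\Omega_k$. If $\Omega$ is unbounded, intersect it with large balls and use the decay condition at infinity, which is available since $n\geq2$.
\item On each $\Omega_k$ the Green function $G_k=G^x_{\Omega_k}$ is smooth up to the boundary away from $x$. Apply Green's second identity on $\Omega_k\setminus B(x,\epsilon)$ and let $\epsilon\to0$. The small sphere around $x$ contributes $-\varphi(x)$, because of the normalization of $\mathcal{E}$. On $\partial\Omega_k$ the term containing $\varphi$ times the normal derivative of $G_k$ is $\int\varphi\,d\omega^x_{\Omega_k}$, since $-\partial_\nu G_k$ is the Poisson kernel. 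The term containing $G_k$ times the normal derivative of $\varphi$ vanishes because $G_k=0$ there. This proves the lemma for $\Omega_k$.
\item Pass to the limit $k\to\infty$. For a Wiener regular $\Omega$, two convergences are needed:
\begin{enumerate}[label=(\alph*)]
\item $G_k\uparrow G^x_\Omega$ pointwise, with domination $0\le G_k\le \mathcal{E}(x-\cdot)$. This domination is locally integrable, so dominated convergence handles $\int\Delta\varphi\, G_k$.
\item $\omega^x_{\Omega_k}\to\omega^x_\Omega$ weakly. This is where Wiener regularity enters. Given $f\in\mathcal{C}(\mathbb{R}^{n+1})$, take the solution $F$ of the Dirichlet problem on $\Omega$ with data $f|_{\partial\Omega}$, extended by $f$ outside $\Omega$; it is continuous on $\overline\Omega$. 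Then
\[
\int f\,d\omega^x_{\Omega_k}-F(x)=\int (f-F)\,d\omega^x_{\Omega_k}\to0,
\]
because $\partial\Omega_k$ approaches $\partial\Omega$ and $F$ is uniformly continuous up to the boundary.
\end{enumerate}
\end{enumerate}

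I expect step (3) to be the main obstacle, particularly the convergence of the Green functions and identifying the limit with \eqref{eq:Green_func}. Monotonicity in $k$ follows from the maximum principle. The limit is harmonic in $\Omega\setminus\{x\}$ and behaves like $\mathcal{E}(x-\cdot)$ near $x$. Its boundary values vanish at every regular point, which here means all of $\partial\Omega$. Together these identify the limit with $\mathcal{E}(x-y)-\int\mathcal{E}(x-z)\,d\omega^y(z)$, by uniqueness for the Dirichlet problem with continuous data $\mathcal{E}(x-\cdot)|_{\partial\Omega}$. In the unbounded case one also has to check the condition at infinity.
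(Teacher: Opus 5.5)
Your argument is correct. The paper gives no proof and quotes \cite[L.~8.13]{HM}, but the framework it sets up points to a different and shorter route. That framework is the identity \eqref{eq:Green_func} valid for $\mathcal{L}^{n+1}$-a.e.\ point of $\mathbb{R}^{n+1}$, which Section 5 uses in the form $G^x_\Omega(y)=\mathcal{E}(x-y)-\int\mathcal{E}(y-z)\,d\omega^x(z)$ for a.e.\ $y$ (cf.\ \eqref{eq:funcion_green_rellenando_agujeros}). With it the lemma is a one-line Fubini computation:
\begin{enumerate}[label=(\roman*)]
\item write $\varphi(z)=-\int\mathcal{E}(z-y)\Delta\varphi(y)\,dy$ for every $z$;
\item integrate against $\omega^x$ and subtract the value at $z=x$;
\item recognize $\mathcal{E}(x-y)-\int\mathcal{E}(z-y)\,d\omega^x(z)$ as $G^x_\Omega(y)$ for a.e.\ $y\in\mathbb{R}^{n+1}$.
\end{enumerate}

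In that route all the potential theory sits inside the a.e.\ identity. It encodes the symmetry of the Green function, which moves the pole from the harmonic measure onto the kernel; this is what your first display would need to be pushed through. It also encodes the fact that the Newtonian potential of $\omega^x$ equals $\mathcal{E}(x-\cdot)$ a.e.\ off $\Omega$, including on $\partial\Omega$, which may have positive volume.

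Your exhaustion argument avoids both. Green's second identity on the smooth $\Omega_k$ uses no symmetry. Since $G_k$ and $G^x_\Omega$ vanish off $\Omega$ and you only need convergence inside $\Omega$, the set $\partial\Omega$ never enters the volume integral. The price is the classical smooth-domain input: regularity of $G_k$ up to $\partial\Omega_k$, the formula $d\omega^x_{\Omega_k}=-\partial_\nu G_k\,d\sigma$, and existence of smooth exhaustions. You also need two limit passages, each using Wiener regularity only through continuity of Dirichlet solutions up to $\partial\Omega$.

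A few points to tidy up:
\begin{enumerate}[label=(\arabic*)]
\item A disconnected $\Omega$ has no exhaustion by domains, so exhaust only the component $U$ of $\Omega$ containing $x$. On the other components $G^x_\Omega\equiv0$, because $\mathcal{E}(x-\cdot)$ is itself the solution of the Dirichlet problem there. Also $\omega^x_\Omega=\omega^x_U$.
\item In the unbounded case, (3b) holds only for $f\in C_c(\mathbb{R}^{n+1})$. The $\omega^x_{\Omega_k}$ are probability measures while $\omega^x_\Omega$ need not be. Vague convergence is all you use, so this costs nothing.
\item You can identify $\lim_k G_k$ by the same mechanism as (3b), without discussing boundary values of the limit. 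On $\Omega_k$ the function $G^x_\Omega-G_k$ is harmonic with boundary values $G^x_\Omega|_{\partial\Omega_k}$. Hence $0\le G^x_\Omega-G_k\le\sup_{\partial\Omega_k}G^x_\Omega\to0$. This holds because, by Wiener regularity, $G^x_\Omega\ge0$ is continuous on $\overline\Omega\setminus\{x\}$ and vanishes on $\partial\Omega$ and at infinity.
\end{enumerate}
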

\begin{lema}\label{lema:cota_superior_medida_armonica_bolas}\cite[L.~8.16]{HM}
    Let $n\geq2$ and $\Omega\subset\mathbb{R}^{n+1}$ be an open Wiener regular set with compact boundary. For any closed ball $\overline{B}$, it holds that
    \begin{equation*}
        \omega^x(\overline{B})\geq c(n)\dfrac{\Capp(\frac{1}{4}\overline{B}\setminus\Omega)}{r(B)^{n-1}}\quad\text{for all }x\in\frac{1}{4}\overline{B}\cap\Omega,
    \end{equation*}
    with $c(n)>0$.
\end{lema}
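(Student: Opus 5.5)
The statement is the lower bound $\omega^x(\overline B)\gtrsim \Capp(\tfrac14\overline B\setminus\Omega)/r(B)^{n-1}$ for poles $x\in\tfrac14\overline B\cap\Omega$. My plan is to compare $\omega^x(\overline B)$ with the equilibrium potential of the compact set $K:=\tfrac14\overline B\setminus\Omega$, using the maximum principle.

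\textbf{Setup.} If $\Capp(K)=0$ there is nothing to prove, so assume $\Capp(K)>0$. Let $\mu$ be a measure supported on $K$ with $U_\mu\le1$ everywhere and $\mu(K)\ge\frac12\Capp(K)$; such a $\mu$ exists by the definition of capacity. Write $r=r(B)$ and let $z_B$ be the center of $B$.

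\textbf{The comparison function.} Define
\[
u(y)=U_\mu(y)-\sup_{z\in\partial B}U_\mu(z).
\]
For $z\in\partial B$ and $w\in K\subset\tfrac14\overline B$ we have $|z-w|\ge\frac34 r$. Hence $\sup_{\partial B}U_\mu\le C\mu(K)r^{1-n}$. The same bound holds on $\R^{n+1}\setminus B$, since there $|y-w|\ge\frac34 r$ as well.

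The function $u$ is harmonic in $\Omega\setminus K=\Omega$, because $\mu$ is supported in $K\subset\Omega^c$. Moreover $u\le1$ everywhere.

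\textbf{Applying the maximum principle.} I claim that $u\le\omega^{(\cdot)}(\overline B)$ in $\Omega$. Both functions are harmonic in $\Omega$. Compare them on $\Omega\cap B$:
\begin{enumerate}[label=(\alph*)]
\item On $\partial\Omega\cap\overline B$, we have $u\le 1$, and $\omega^y(\overline B)\to1$ as $y$ approaches a boundary point in the interior of $B$. This uses Wiener regularity and is fine via the Perron/continuity argument with a continuous cutoff $f\le\chi_{\overline B}$, $f=1$ near the point.
\item On $\partial B\cap\Omega$, we have $u\le0\le\omega$.
\end{enumerate}
It is cleaner to run the argument with $\varphi_\epsilon$ continuous, $\chi_{(1-\epsilon)\overline B}\le\varphi_\epsilon\le\chi_{\overline B}$, solving the Dirichlet problem with data $\varphi_\epsilon$, and comparing on $\Omega\cap(1-\epsilon)B$. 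Then let $\epsilon\to0$.

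For unbounded $\Omega$ with $n\ge2$, the function $u$ is negative near infinity, so there is no trouble there. This gives $\omega^x(\overline B)\ge u(x)$.

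\textbf{Lower bound for $u$ at the pole.} It remains to bound $u(x)$ from below for $x\in\tfrac14\overline B\cap\Omega$. For $w\in K$ we have $|x-w|\le\frac12 r$, so
\[
U_\mu(x)\ge\frac{\mu(K)}{\kappa_{n+1}(r/2)^{n-1}}.
\]
The subtracted term, however, is at most $\mu(K)/\bigl(\kappa_{n+1}(3r/4)^{n-1}\bigr)$. The difference is
\[
\frac{\mu(K)}{\kappa_{n+1}r^{n-1}}\bigl(2^{n-1}-(4/3)^{n-1}\bigr)>0,
\]
which is positive because $n\ge2$.

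This yields $\omega^x(\overline B)\ge c(n)\mu(K)r^{1-n}\ge \tfrac{c(n)}{2}\Capp(K)r^{1-n}$.

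\textbf{Main obstacle.} The delicate step is the boundary comparison in the maximum principle. It needs Wiener regularity to ensure that the harmonic extension of the continuous cutoff attains its boundary values. The geometric constants work out precisely because the pole lies in $\tfrac14 B$ while the comparison is made on $\partial B$.
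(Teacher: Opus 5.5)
Your proof is correct and is the standard equilibrium-potential argument behind the cited result \cite[L.~8.16]{HM}; the paper itself only quotes the lemma without proof. As in that argument, you compare $\omega^{(\cdot)}(\overline B)$ with $U_\mu-\sup_{\partial B}U_\mu$ via the maximum principle on $\Omega\cap B$, use Wiener regularity at points of $\partial\Omega\cap B$, and exploit the gap between the distances $r/2$ and $3r/4$ (which is where $n\ge 2$ enters). One small caution: in your optional $\varphi_\epsilon$ variant you would need to subtract $\sup_{\partial((1-\epsilon)B)}U_\mu$ rather than $\sup_{\partial B}U_\mu$, since $U_\mu$ can exceed the latter on the smaller sphere; your pointwise cutoff argument at each point of $\partial\Omega\cap B$ already makes that variant unnecessary.
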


\begin{lema}\label{lema:Cap_8B}\cite[L.~8.18]{HM}
Let $n\geq2$ and $\Omega\subset\mathbb{R}^{n+1}$ be an open Wiener regular set with compact boundary. For any closed ball $\overline{B}$, it holds that
\begin{equation*}
    \Capp(2\overline{B}\setminus \Omega)G_\Omega^x(y)\lesssim_{n}\omega^x(8\overline{B})\quad\text{for all }x\in\Omega\setminus2\overline{B}\text{ and }y\in\overline{B}\cap\Omega.
\end{equation*}
\end{lema}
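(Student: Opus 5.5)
The statement to prove is Lemma \ref{lema:Cap_8B}: for $x\in\Omega\setminus 2\overline B$ and $y\in\overline B\cap\Omega$,
\[
\Capp(2\overline B\setminus\Omega)\,G^x_\Omega(y)\lesssim\omega^x(8\overline B).
\]
I would compare two functions of the pole on $\Omega\setminus 2\overline B$ and conclude with the maximum principle.

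\textbf{Setting up the potential.} Let $E=2\overline B\setminus\Omega$, which is compact, and let $\mu$ be an equilibrium-type measure on $E$. That is, $\mu\in M_+(E)$, $U_\mu\le 1$ everywhere, and $\mu(E)\ge \tfrac12\Capp(E)$. If $\Capp(E)=0$ there is nothing to prove. Write $r=r(B)$.

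\textbf{Upper bound on $G^x_\Omega(y)$ by averaging.} I would first bound $G^x_\Omega(y)$ by averaging the Green function against $\mu$. By symmetry $G^x_\Omega(y)=G^y_\Omega(x)$. Consider
\[
u(z)=\int \bigl(\mathcal E(z-w)-\mathcal E(y-w)\bigr)\,d\mu(w),
\]
or more simply compare $v(x)=G^x_\Omega(y)$ with $w(x)=\omega^x(8\overline B)$ as functions of $x\in\Omega\setminus 2\overline B$.

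\textbf{The comparison itself.} Both $v$ and $w$ are harmonic in $\Omega\setminus 2\overline B$ (here $y\in\overline B$ is a fixed interior pole). Both vanish at regular points of $\partial\Omega$ away from $8\overline B$, and both tend to $0$ at infinity when $n\ge2$. On $\partial(2B)\cap\Omega$, Lemma \ref{lema:cota_superior_medida_armonica_bolas} applied to the ball $8\overline B$ gives $w(x)\ge c\,\Capp(2\overline B\setminus\Omega)/r^{n-1}$, because $\tfrac14\cdot 8\overline B=2\overline B$.

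It then suffices to show $G^x_\Omega(y)\lesssim r^{1-n}$ for $x\in\partial(2B)$ and $y\in\overline B$. This follows from $G\le\mathcal E(x-y)$ together with $|x-y|\ge r$, which gives $G^x_\Omega(y)\le C r^{1-n}$.

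Hence on $\partial(2B)\cap\Omega$,
\[
\Capp(E)\,v\le C\,\Capp(E)\,r^{1-n}\le C'\,w .
\]
The maximum principle in $\Omega\setminus 2\overline B$ then yields the claim for all $x\in\Omega\setminus2\overline B$.

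\textbf{Main obstacle.} The hard part is justifying the maximum principle at the boundary. On $\partial\Omega\setminus 2\overline B$ both functions have boundary value $0$, or $w\ge0$ there, and on $\partial\Omega\cap 8\overline B$ the function $w$ is near $1$; all of this holds only at regular points. Since $\Omega$ is Wiener regular, every boundary point is regular. I would then use that $v$ is continuous up to $\partial\Omega\setminus\{y\}$ with value $0$, apply the maximum principle to $\Capp(E)v-C'w$, and handle the unbounded case via the decay at infinity.
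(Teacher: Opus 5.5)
Your argument is correct and is essentially the standard proof behind the cited \cite[L.~8.18]{HM}. You compare $G^x_\Omega(y)$ with $\omega^x(8\overline B)$ as functions of $x$ on $\Omega\setminus 2\overline B$: on $\partial(2B)\cap\Omega$ you combine $G^x_\Omega(y)\le\mathcal{E}(x-y)\lesssim r^{1-n}$ with the lower bound of Lemma~\ref{lema:cota_superior_medida_armonica_bolas} for the ball $8\overline B$, and then conclude by the maximum principle, since $G\to0$ on $\partial\Omega$ and at $\infty$ while $\omega^x(8\overline B)\ge0$. The equilibrium-type measure $\mu$ and the auxiliary function $u$ you introduce at the start are never used and can simply be dropped.
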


\begin{lema}\label{lema:bolas_P_doblantes_medida_armonica}\cite[L.~6.1]{AMT}
    There is $\gamma_0\in(0,1)$ so that the following holds. Let $\Omega\subset\mathbb{R}^{n+1}$ be any domain. For all $\gamma>\gamma_0$, there exists some big enough constant $a=a(\gamma,n)$ such that for $\omega^x_\Omega$-a.e. $y\in\mathbb{R}^{n+1}$ there exists a sequence of $(a, P_{\gamma,\omega^x})$-doubling balls $B(y, r_i)$, with $r_i\to0$ as $i\to\infty$. 
\end{lema}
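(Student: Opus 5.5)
The plan is to argue by contradiction on a single point. At a point where no small dyadic ball is $P_{\gamma,\omega}$-doubling, the Poisson densities satisfy a recursive inequality, and this forces $\omega(B(y,r))$ to decay like $r^{n+\gamma'}$ with $\gamma'>\gamma_0$. Bourgain's dimension bound $\dim_{\mathcal H}\omega\le n+1-c_{n+1}$, quoted in the Introduction and valid for arbitrary domains, then shows that such points carry no harmonic measure. We take $\gamma_0:=1-c_{n+1}\in(0,1)$, write $\omega=\omega^x_\Omega$, and note that $\omega$ is a finite measure. Hence $P_{\gamma,\omega}(B)\le\sum_j 2^{-j\gamma}\omega(\R^{n+1})(2^jr)^{-n}<\infty$ for every ball, so all quantities below are finite.

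\emph{Step 1 (recursion).} Fix $y$ and set $B_k=B(y,2^{-k})$. Suppose that for all $k\ge k_0$ the ball $B_k$ is not $(a,P_{\gamma,\omega})$-doubling, i.e.\ $\Theta^n_\omega(B_k)<a^{-1}P_{\gamma,\omega}(B_k)$. Splitting off the $j=0$ term of the Poisson density gives $P_{\gamma,\omega}(B_k)=\Theta^n_\omega(B_k)+2^{-\gamma}P_{\gamma,\omega}(B_{k-1})$. Combining the two,
\begin{equation*}
P_{\gamma,\omega}(B_k)\le \frac{2^{-\gamma}}{1-a^{-1}}\,P_{\gamma,\omega}(B_{k-1})\qquad (k>k_0).
\end{equation*}
Choose $\gamma'\in(\gamma_0,\gamma)$ and then $a=a(\gamma,n)$ so large that $2^{-\gamma}(1-a^{-1})^{-1}\le 2^{-\gamma'}$. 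This is possible precisely because $\gamma>\gamma_0$. Iterating the inequality gives $\Theta^n_\omega(B_k)\le P_{\gamma,\omega}(B_k)\le C_y2^{-k\gamma'}$. Therefore $\omega(B(y,r))\le C'_y r^{n+\gamma'}$ for all $0<r<1$.

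\emph{Step 2 (the bad set is null).} Let $F$ be the set of $y$ admitting no sequence of $(a,P_{\gamma,\omega})$-doubling balls with radii tending to $0$. By Step 1 and the comparability of arbitrary radii with dyadic ones, $\limsup_{r\to0}\omega(B(y,r))/r^{s}<\infty$ on $F$, where $s=n+\gamma'$. We write $F=\bigcup_m F_m$ with
\begin{equation*}
F_m=\{y:\ \omega(B(y,r))\le m\,r^s \text{ for } r<1/m\}.
\end{equation*}
The standard density theorem (e.g.\ \cite{MAT}) gives $\omega(A)\lesssim m\,\mathcal H^s(A)$ for $A\subset F_m$. By Bourgain's theorem there is a set $E$ of full $\omega$-measure with $\dim_{\mathcal H}E\le n+1-c_{n+1}=n+\gamma_0<s$, so $\mathcal H^s(E)=0$. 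Consequently $\omega(F_m)=\omega(F_m\cap E)\lesssim m\,\mathcal H^s(F_m\cap E)=0$, and so $\omega(F)=0$.

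\emph{Main obstacle.} The only genuinely deep input is Bourgain's dimension bound. We use it as stated for arbitrary domains (and it is the reason $\gamma_0<1$ exists). Two details need care in the bookkeeping. First, non-doubling of the dyadic balls $B(y,2^{-k})$ for all large $k$ is exactly the negation of the conclusion if we only seek dyadic radii; this suffices, since dyadic $r_i$ are allowed. Second, the constant $C_y$ depends on $y$, which is why we decompose $F$ into the sets $F_m$ before applying the density theorem.
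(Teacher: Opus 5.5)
Your argument is correct. Splitting off the $j=0$ term gives the recursion $P_{\gamma,\omega}(B_k)=\Theta^n_\omega(B_k)+2^{-\gamma}P_{\gamma,\omega}(B_{k-1})$, which together with non-doubling forces the decay $\omega(B(y,r))\lesssim_y r^{n+\gamma'}$, and Bourgain's bound plus the upper density theorem then shows that such points are $\omega$-null. The paper does not reprove this lemma but quotes it from \cite[L.~6.1]{AMT}, whose proof is essentially this same combination of Bourgain's dimension estimate, the density theorem, and geometric decay of Poisson densities along non-doubling balls (a cosmetic point: take $c_{n+1}<1$ so that $\gamma_0=1-c_{n+1}$ lies in $(0,1)$).
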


\begin{lema}\label{lema:relacion_contenido_Hausodrf_capacidad}\cite[Th.~6.20]{HM}
    Let $E\subset\mathbb{R}^{n+1}$ be compact and $n-1<s\leq n+1$, then
    \begin{equation*}
        \mathcal{H}^s_{\infty}(E)^{\frac{n-1}{s}}\lesssim_{n,s}\Capp(E)\lesssim_{n}\mathcal{H}_\infty^{n-1}(E),
    \end{equation*}
    where $\mathcal{H}_\infty^d$ denotes the Hausdorff content of dimension $d$.
    \end{lema}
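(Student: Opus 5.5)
We prove the two inequalities separately. The upper bound comes directly from the definition of capacity and a covering. The lower bound comes from Frostman's lemma combined with an estimate of the Newtonian potential of a measure with $s$-growth, where $s>n-1$.

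\emph{Upper bound.} Let $\mu\in M_+(E)$ satisfy $U_\mu\le 1$ on $\mathbb{R}^{n+1}$. Given $\delta>0$, cover $E$ by countably many balls $B_i=B(x_i,r_i)$ with $\sum_i r_i^{n-1}\le \mathcal H^{n-1}_\infty(E)+\delta$. We may assume each ball meets $E$; since $E$ is compact, enlarging the balls slightly and passing to a finite subcover costs only an additional $\delta$.

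For $y\in B_i$ we have $\mathcal E(x_i-y)\ge \kappa_{n+1}^{-1}r_i^{1-n}$. Hence
\begin{equation*}
\mu(B_i)\le \kappa_{n+1}\, r_i^{n-1}\int_{B_i}\mathcal E(x_i-y)\,d\mu(y)\le \kappa_{n+1}\, r_i^{n-1}\,U_\mu(x_i)\le \kappa_{n+1}\, r_i^{n-1}.
\end{equation*}
Summing over $i$ gives $\mu(E)\lesssim_n \mathcal H^{n-1}_\infty(E)+\delta$. Letting $\delta\to0$ and taking the supremum over admissible $\mu$ yields $\Capp(E)\lesssim_n\mathcal H^{n-1}_\infty(E)$.

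\emph{Lower bound.} Assume $\mathcal H^s_\infty(E)>0$, since otherwise there is nothing to prove. By Frostman's lemma there is a measure $\mu\in M_+(E)$ with
\begin{equation*}
\mu(B(x,r))\le r^s \quad\text{for all } x\in\mathbb{R}^{n+1},\ r>0,\qquad \mu(E)\ge c(n,s)\,\mathcal H^s_\infty(E).
\end{equation*}
This is the standard dyadic construction: put mass $\ell(Q)^s$ on fine dyadic cubes meeting $E$, then reduce it successively on larger cubes. It is the only nontrivial external ingredient, and I regard it as the main step. Since it is classical, I would simply quote it.

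Fix $x\in\mathbb{R}^{n+1}$ and a radius $\rho>0$ to be chosen, and split $U_\mu(x)$ into the near part $|x-y|<\rho$ and the far part $|x-y|\ge\rho$.

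\begin{itemize}
\item For the near part, decompose into the dyadic annuli $2^{-k-1}\rho\le|x-y|<2^{-k}\rho$, $k\ge0$. On the $k$-th annulus the kernel is $\lesssim (2^{-k}\rho)^{1-n}$ and the mass is $\le (2^{-k}\rho)^s$. Therefore
\begin{equation*}
\int_{|x-y|<\rho}\mathcal E(x-y)\,d\mu(y)\lesssim\sum_{k\ge0}(2^{-k}\rho)^{s-n+1}\lesssim_{n,s}\rho^{s-n+1}.
\end{equation*}
The series converges precisely because $s>n-1$.
\item For the far part, $\mathcal E(x-y)\lesssim\rho^{1-n}$, so this contribution is $\lesssim\rho^{1-n}\mu(E)$.
\end{itemize}

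Choosing $\rho=\mu(E)^{1/s}$ balances the two terms and gives
\begin{equation*}
U_\mu(x)\le C\,\mu(E)^{(s-n+1)/s}\quad\text{for all } x\in\mathbb{R}^{n+1},
\end{equation*}
with $C=C(n,s)$. Set $\nu=\mu/\bigl(C\mu(E)^{(s-n+1)/s}\bigr)$. Then $U_\nu\le1$, so $\nu$ is admissible in the definition of capacity, and
\begin{equation*}
\Capp(E)\ge\nu(E)=C^{-1}\mu(E)^{(n-1)/s}\gtrsim_{n,s}\mathcal H^s_\infty(E)^{(n-1)/s}.
\end{equation*}
This completes the lower bound.
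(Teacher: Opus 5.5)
Your proof is correct, and it is the standard argument behind \cite[Th.~6.20]{HM}, which the paper quotes without proof. For the upper bound you use the pointwise estimate $\mu(B(x,r))\le\kappa_{n+1}r^{n-1}U_\mu(x)$ together with a near-optimal covering; for the lower bound you use Frostman's lemma and split $U_\mu$ at the scale $\rho=\mu(E)^{1/s}$, where $s>n-1$ makes the near part converge. The details you gloss over, namely converting a diameter-based Hausdorff content into ball covers and the normalization in Frostman's lemma, only change dimensional constants.
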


    \begin{lema}\label{lema:Markov_property_harmonic_measure}\cite[Th.~5.54]{HM}
        Let $\Omega,\tilde{\Omega}\subset\mathbb{R}^{n+1}$ be open sets with compact boundaries such that $\Omega\subset\tilde{\Omega}$. Suppose also that $\Omega$ is Wiener regular and that the points from $\partial\Omega \cap\partial\tilde{\Omega}$ are regular for $\tilde{\Omega}$. Denote by $\omega_\Omega$ and $\omega_{\tilde{\Omega}}$ the respective measures for $\Omega$ and $\tilde{\Omega}$. Then, for every $x\in\Omega$ and every Borel set $A\subset\partial\tilde{\Omega}$, it holds
        \begin{equation*}
            \omega_{\tilde{\Omega}}^x(A)=\omega^x_\Omega(A)+\int_{\partial\Omega\setminus\partial\tilde{\Omega}}\omega^y_{\tilde{\Omega}}(A)\,d\omega^x_\Omega(y).
        \end{equation*}
    \end{lema}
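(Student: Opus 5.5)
The plan is to prove the identity first with the indicator of $A$ replaced by a continuous function $f\in\mathcal{C}(\partial\tilde\Omega)$, and then pass to Borel sets by a measure-theoretic argument. Fix $f\in\mathcal{C}(\partial\tilde\Omega)$ and let $u(z)=\int_{\partial\tilde\Omega} f\,d\omega^z_{\tilde\Omega}$ be the Perron solution in $\tilde\Omega$. It is harmonic and bounded in $\tilde\Omega$, hence harmonic in $\Omega$.

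Since $\Omega\subset\tilde\Omega$, we have $\partial\Omega\subset\overline{\tilde\Omega}$. So every point of $\partial\Omega\setminus\partial\tilde\Omega$ lies in $\tilde\Omega$, where $u$ is defined and continuous. Define $g$ on $\partial\Omega$ by $g=f$ on $\partial\Omega\cap\partial\tilde\Omega$ and $g=u$ on $\partial\Omega\setminus\partial\tilde\Omega$.

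The key step is to check that $g\in\mathcal{C}(\partial\Omega)$ and that $u|_\Omega$ extends continuously to $\overline\Omega$ with boundary values $g$.
\begin{enumerate}[label=(\alph*)]
\item At points of $\partial\Omega\setminus\partial\tilde\Omega$ this holds because $u$ is continuous in the open set $\tilde\Omega$.
\item At a point $\xi\in\partial\Omega\cap\partial\tilde\Omega$ we use the hypothesis that $\xi$ is regular for $\tilde\Omega$. This gives $u(z)\to f(\xi)$ as $z\to\xi$ with $z\in\tilde\Omega$, and in particular for $z\in\Omega$ or $z\in\partial\Omega\setminus\partial\tilde\Omega$. Combined with the continuity of $f$ on $\partial\tilde\Omega$, this yields continuity of $g$ at $\xi$.
\end{enumerate}
If $\Omega$ is unbounded and $n\ge2$, we also need $u(z)\to0$ as $|z|\to\infty$. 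This holds because $\tilde\Omega$ is then unbounded with compact boundary, and the Perron solution satisfies the decay condition at infinity.

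Since $\Omega$ is Wiener regular, the classical Dirichlet problem in $\Omega$ with datum $g$ has a solution. By the maximum principle, that solution coincides with $u|_\Omega$. Hence, for $x\in\Omega$,
\begin{equation*}
\int f\,d\omega^x_{\tilde\Omega}=u(x)=\int_{\partial\Omega} g\,d\omega^x_\Omega=\int_{\partial\Omega\cap\partial\tilde\Omega} f\,d\omega^x_\Omega+\int_{\partial\Omega\setminus\partial\tilde\Omega}\int f\,d\omega^y_{\tilde\Omega}\,d\omega^x_\Omega(y).
\end{equation*}

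To pass to Borel sets $A\subset\partial\tilde\Omega$, consider the set function $\nu(A)=\omega^x_\Omega(A)+\int_{\partial\Omega\setminus\partial\tilde\Omega}\omega^y_{\tilde\Omega}(A)\,d\omega^x_\Omega(y)$. Here $\omega^x_\Omega(A)=\omega^x_\Omega(A\cap\partial\Omega\cap\partial\tilde\Omega)$ because $\omega^x_\Omega$ lives on $\partial\Omega$. The map $y\mapsto\omega^y_{\tilde\Omega}(A)$ is harmonic, hence Borel, on $\tilde\Omega$, so the integral makes sense. By monotone convergence, $\nu$ is a finite Borel measure on the compact set $\partial\tilde\Omega$. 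The identity above says that $\nu$ and $\omega^x_{\tilde\Omega}$ integrate every continuous function equally. By uniqueness in the Riesz representation theorem, they coincide on all Borel sets.

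The main obstacle is the continuity and uniqueness step, namely identifying $u|_\Omega$ with the Dirichlet solution for $g$. This is where regularity of the common boundary points for $\tilde\Omega$ is essential, together with care at infinity when the sets are unbounded. In the planar unbounded case, I would impose boundedness in place of decay and use the uniqueness of bounded solutions for Wiener regular sets with compact boundary.
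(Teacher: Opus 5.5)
Your proposal is correct. The paper does not prove this lemma itself but quotes it from \cite[Th.~5.54]{HM}, and your argument is the standard proof of this Markov property: continuity of $g$ and of $u|_{\Omega}$ up to $\partial\Omega$ from the regularity of the points of $\partial\Omega\cap\partial\tilde\Omega$ for $\tilde\Omega$, identification of $u|_\Omega$ with the solution of the Dirichlet problem in $\Omega$ with datum $g$, and passage from $\mathcal{C}(\partial\tilde\Omega)$ to Borel sets by uniqueness in the Riesz representation theorem.

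As a minor remark, Wiener regularity of $\Omega$ is not actually needed in your identification step. A function harmonic in $\Omega$, continuous on $\overline{\Omega}$, and (for unbounded $\Omega$, $n\geq 2$) tending to $0$ at infinity lies in both the lower and upper Perron classes for its boundary values, so it already coincides with the Perron solution.
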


    \begin{lema}\label{lema:modificacion_dominios_no_WR}\cite[P.~6.37]{HM}
        Let $\Omega\subset\mathbb{R}^{n+1}$ be open with compact boundary and let $p\in\Omega$. Let $W\subset\partial\Omega$ be the family of irregular points of $\Omega$. For any $\varepsilon>0$, there exists a covering of $W$ by a countable or finite family of closed balls $\{\overline{B_i}\}_{i\in I}$ satisfying the following properties:
        \begin{enumerate}[label=\roman*)]
            \item The balls $\overline{B_i}$ are centered in $\partial\Omega$ and they have bounded overlap.
            \item $\Capp(\bigcup_{i\in I}2\overline{B_i})\leq\varepsilon$.
            \item ${\Omega}_\varepsilon:=\Omega\setminus\bigcup_{i\in I}\overline{B_i}$ is open and it holds that 
            ${\Omega}_\varepsilon=\Omega\setminus\overline{\bigcup_{i\in I}\overline{B_i}}$.
            \item $\partial{\Omega}_\varepsilon\subset\left( \partial\Omega\setminus\bigcup_{i\in I}\overline{B_i} \right)\cup\bigcup_{i\in I}\partial\overline{B_i}$.
            \item ${\Omega}_\varepsilon$ is Wiener regular.
            \item \label{itemvi} For any $x\in{\Omega}_\varepsilon$, $n\geq2$, we have
            \begin{equation*}
                \omega^x_{{\Omega}_\varepsilon}\left( \bigcup_{i\in I}2\overline{B_i} \right)\leq\varepsilon\sup_{y\in\partial{\Omega}_\varepsilon} \mathcal{E}(x-y).
            \end{equation*}
        \end{enumerate}
    \end{lema}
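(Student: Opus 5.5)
The plan is to build the balls from an open set of small capacity containing $W$, and then check the six properties one at a time. The starting point is Kellogg's theorem: the set $W$ of irregular points of $\Omega$ is a Borel polar set, so $\Capp(W)=0$. Newtonian capacity is outer, so for the given $\varepsilon>0$ there is an open set $U\supset W$ with $\Capp(U)\le\varepsilon$. (If one prefers to work with the union of doubled balls rather than with $U$, one replaces $\varepsilon$ by $\varepsilon/C(n)$; this changes nothing below.) For each $x\in W$ we choose a radius $r_x>0$ so that
\begin{itemize}
\item $B(x,4r_x)\subset U$,
\item $r_x\le \min\{1,\operatorname{dist}(x,p)/8\}$.
\end{itemize}
We then apply the Besicovitch covering theorem to the family $\{\overline{B}(x,r_x)\}_{x\in W}$. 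This is legitimate because the radii are bounded and the theorem needs no compactness of $W$. It yields a countable subfamily $\{\overline{B_i}\}_{i\in I}$ that covers $W$, is centered on $W\subset\partial\Omega$, and has overlap bounded by a dimensional constant, which gives i). Since every $2\overline{B_i}$ lies in $U$, monotonicity of capacity gives ii):
\[
\Capp\Big(\bigcup_i 2\overline{B_i}\Big)\le\Capp(U)\le\varepsilon.
\]
The second condition on $r_x$ also keeps $p$ away from all the balls, so $p\in\Omega_\varepsilon$.

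For iii) and iv) we use local finiteness of the family inside $\Omega$. Take $z\in\Omega$ and put $d=\operatorname{dist}(z,\partial\Omega)>0$. Any ball $\overline{B_i}$ meeting $B(z,d/2)$ has its center on $\partial\Omega$, so its radius is at least $d/2$. By bounded overlap together with a volume count, only finitely many balls with radius at least $d/2$ can meet $B(z,d/2)$. Consequently $\bigcup_i\overline{B_i}$ is relatively closed in $\Omega$. This gives both that $\Omega_\varepsilon$ is open and the identity $\Omega_\varepsilon=\Omega\setminus\overline{\bigcup_i\overline{B_i}}$. For the latter, note that accumulation points of the union which lie outside the union itself belong to $\partial\Omega$, hence are not in $\Omega$. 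Property iv) follows by the same reasoning: a point of $\partial\Omega_\varepsilon$ either lies on some sphere $\partial\overline{B_i}$, or lies in $\partial\Omega$ outside all the balls.

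For v) we look at the two kinds of boundary points from iv).
\begin{itemize}
\item A point on a sphere $\partial\overline{B_i}$ has an exterior ball, namely $B_i$ itself, so it is regular for $\Omega_\varepsilon$.
\item A point $\xi\in\partial\Omega\setminus\bigcup_i\overline{B_i}$ is not in $W$, because the balls cover $W$. So $\xi$ is regular for $\Omega$. Since $\Omega_\varepsilon\subset\Omega$, the complement of $\Omega_\varepsilon$ is larger than that of $\Omega$, and the Wiener criterion (monotone in the complement) shows $\xi$ is also regular for $\Omega_\varepsilon$.
\end{itemize}
Hence $\Omega_\varepsilon$ is Wiener regular.

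For vi), let $K=\partial\Omega_\varepsilon\cap\overline{\bigcup_i 2\overline{B_i}}$. This is a compact subset of $\partial\Omega_\varepsilon$, and we take its equilibrium measure $\mu$. Then $\mu$ is supported on $K\subset\partial\Omega_\varepsilon$, has $U_\mu\le1$ everywhere, and has $U_\mu\ge1$ quasi-everywhere on $K$. Its mass satisfies
\[
\mu(K)=\Capp(K)\le\Capp\Big(\overline{\textstyle\bigcup_i 2\overline{B_i}}\Big)\lesssim\varepsilon,
\]
where we use $\overline{\bigcup_i 2\overline{B_i}}\subset\overline U$ (one may shrink radii so that the closure sits inside a slightly larger open set of capacity at most $2\varepsilon$). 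The function $U_\mu$ is superharmonic and nonnegative, and its boundary values on $K$ are at least $1$ except on a polar set. Since $\Omega_\varepsilon$ is regular, that polar set has zero harmonic measure. The comparison principle then gives, for $x\in\Omega_\varepsilon$,
\[
\omega^x_{\Omega_\varepsilon}\Big(\bigcup_i2\overline{B_i}\Big)\le\omega^x_{\Omega_\varepsilon}(K)\le U_\mu(x)=\int\mathcal E(x-y)\,d\mu(y)\le\mu(K)\sup_{y\in\partial\Omega_\varepsilon}\mathcal E(x-y).
\]
The key point in the last inequality is that $\mu$ is supported on $\partial\Omega_\varepsilon$, which is why the supremum can be taken over $\partial\Omega_\varepsilon$.

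I expect the main obstacle to be vi). The two delicate points there are the passage from ``quasi-everywhere'' to an honest majorization in the comparison argument, and the choice of the supporting set for the potential, which must lie on $\partial\Omega_\varepsilon$ for the supremum in the final bound to be over $\partial\Omega_\varepsilon$. A secondary technical point is the capacity of the closure in ii) and vi). I would handle it by shrinking the radii so that the union of the doubled balls is contained in a compact subset of $U$ up to accumulation points in $\overline W$, and then using the outer regularity of capacity once more.
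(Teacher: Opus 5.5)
\paragraph{The step that fails.} Your arguments for i)--v) hold up: Kellogg plus outer regularity to get $U$, Besicovitch for bounded overlap, local finiteness of the family inside $\Omega$, exterior balls on the spheres, and monotonicity of Wiener's test at the remaining points. Comparison with an equilibrium potential is also the right mechanism for vi). What fails is the choice $K=\partial\Omega_\varepsilon\cap\overline{\bigcup_i 2\overline{B_i}}$ together with the claim $\Capp(K)\lesssim\varepsilon$. Because the balls cover $W$, the closure $\overline{\bigcup_i 2\overline{B_i}}$ contains $\overline W$, and so does every open set containing that closure. Although $W$ is polar, $\overline W$ need not be. So shrinking the radii cannot help, and $\overline U$ has no reason to have small capacity.

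\paragraph{Counterexample.} Let $\Omega=B(0,1)\setminus\{d_j\}_{j\geq1}$ with $p=0$, $1/2<|d_j|\to 1$, and the directions $d_j/|d_j|$ dense in the unit sphere $S$. Then $W=\{d_j\}$ and $S\subset\overline W$. By iv),
$$\partial\Omega_\varepsilon\setminus\bigcup_i 2\overline{B_i}\subset\partial\Omega\setminus\bigcup_i\overline{B_i}\subset S\subset\overline{\bigcup_i 2\overline{B_i}},$$
that is, $\partial\Omega_\varepsilon\subset K\cup\bigcup_i 2\overline{B_i}$. Take radii below $1/8$. Then $\partial\Omega_\varepsilon$ surrounds $B(0,1/4)$, so subadditivity gives $\Capp(K)\geq\Capp(\partial\Omega_\varepsilon)-\varepsilon\gtrsim 1$. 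Likewise, by the (true) estimate vi), $\omega^0_{\Omega_\varepsilon}(K)\geq 1-\omega^0_{\Omega_\varepsilon}(\bigcup_i 2\overline{B_i})\geq 1-C\varepsilon$, whereas $\varepsilon\sup_{y\in\partial\Omega_\varepsilon}\mathcal{E}(y)\lesssim\varepsilon$. So the inequality your argument would give for $K$ is actually false, not just unproved. The closure must not be taken.

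\paragraph{How to repair it.} Exhaust instead of closing up. Enumerate $I=\{i_1,i_2,\dots\}$ and set $F_N=\partial\Omega_\varepsilon\cap\bigcup_{k\leq N}2\overline{B_{i_k}}$. Each $F_N$ is a compact subset of $\partial\Omega_\varepsilon$ lying in $U$, so $\Capp(F_N)\leq\Capp(U)\leq\varepsilon$ by monotonicity alone. Your comparison with the equilibrium measure $\mu_N$ of $F_N$ then gives
$$\omega^x_{\Omega_\varepsilon}(F_N)\leq U_{\mu_N}(x)\leq\Capp(F_N)\sup_{y\in F_N}\mathcal{E}(x-y)\leq\varepsilon\sup_{y\in\partial\Omega_\varepsilon}\mathcal{E}(x-y).$$
Since $\omega^x_{\Omega_\varepsilon}$ is a measure carried by $\partial\Omega_\varepsilon$, we have $\omega^x_{\Omega_\varepsilon}(\bigcup_i 2\overline{B_i})=\lim_N\omega^x_{\Omega_\varepsilon}(F_N)$. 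This is vi) with the exact constant $\varepsilon$. It also makes the closure discussion in your last paragraph unnecessary, since ii) only uses $\bigcup_i 2\overline{B_i}\subset U$.

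\paragraph{A smaller point.} In the comparison step, the polar subset of $F_N$ where $U_{\mu_N}<1$ is handled by the extended maximum principle for superharmonic functions bounded below, under which polar boundary sets can be ignored. That this set has zero harmonic measure (true in any open set) does not by itself give the majorization. Regularity of $\Omega_\varepsilon$ plays no role in this step.
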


    Observe that, for $x\in\Omega$ and $\varepsilon\in(0,1)$, the term $ \sup_{y\in\partial{\Omega}_\varepsilon} \mathcal{E}(x-y)$ on the right hand side of the inequality in \ref{itemvi} is finite and uniformly bounded on $\varepsilon>0$ because $d(x,\partial\Omega_\varepsilon)\geq d(x,\partial\Omega)>0$ and $\partial\Omega_\varepsilon$ is contained in a fixed compact set independent of $\varepsilon$.

    \begin{lema}\label{lema:aproximacion_medida_con_dominios_WR_modificados}\cite[L.~6.38]{HM}
Let $n\geq2$ and $\Omega\subset\mathbb{R}^{n+1}$ be open with compact boundary and let $p\in\Omega$. For any $\varepsilon>0$, denote by ${\Omega}_\varepsilon$ the Wiener regular set constructed in Lemma \ref{lema:modificacion_dominios_no_WR}. Then, for any Borel set $A\subset\partial\Omega$, it holds that
\begin{equation*}
    \lim_{\varepsilon\to0}\omega_{{\Omega}_\varepsilon}^p(A)=\omega_\Omega^p(A).
\end{equation*}
    \end{lema}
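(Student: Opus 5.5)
The plan is to compare the two harmonic measures directly with the Markov property, Lemma \ref{lema:Markov_property_harmonic_measure}, applied to the pair $\Omega_\varepsilon\subset\Omega$. The error term will then be controlled by property \ref{itemvi} of Lemma \ref{lema:modificacion_dominios_no_WR}.

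First I check that $p\in\Omega_\varepsilon$ for all small $\varepsilon$. The balls $\overline{B_i}$ are centered on $\partial\Omega$, and $\Capp(2\overline{B_i})\approx r(B_i)^{n-1}$. Since $\Capp(\bigcup_i 2\overline{B_i})\le\varepsilon$, every radius satisfies $r(B_i)\lesssim\varepsilon^{1/(n-1)}$. Once this is smaller than $d(p,\partial\Omega)$, no ball contains $p$, so $p\in\Omega_\varepsilon$.

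Next I verify the hypotheses of Lemma \ref{lema:Markov_property_harmonic_measure}:
\begin{itemize}
\item $\Omega_\varepsilon\subset\Omega$ has compact boundary and is Wiener regular, by item (v).
\item By item (iv), $\partial\Omega_\varepsilon\cap\partial\Omega\subset\partial\Omega\setminus\bigcup_i\overline{B_i}$.
\item The balls cover the set $W$ of irregular points of $\Omega$, so every point of $\partial\Omega_\varepsilon\cap\partial\Omega$ is regular for $\Omega$.
\end{itemize}
Applying the lemma with $x=p$ and a Borel set $A\subset\partial\Omega$ gives
\begin{equation*}
\omega^p_\Omega(A)=\omega^p_{\Omega_\varepsilon}(A)+\int_{\partial\Omega_\varepsilon\setminus\partial\Omega}\omega^y_\Omega(A)\,d\omega^p_{\Omega_\varepsilon}(y).
\end{equation*}
Here $\omega^p_{\Omega_\varepsilon}(A)$ means $\omega^p_{\Omega_\varepsilon}(A\cap\partial\Omega_\varepsilon)$, since that measure lives on $\partial\Omega_\varepsilon$.

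The integrand lies in $[0,1]$, so
\begin{equation*}
0\le\omega^p_\Omega(A)-\omega^p_{\Omega_\varepsilon}(A)\le\omega^p_{\Omega_\varepsilon}(\partial\Omega_\varepsilon\setminus\partial\Omega).
\end{equation*}
By item (iv), $\partial\Omega_\varepsilon\setminus\partial\Omega\subset\bigcup_i\partial\overline{B_i}\subset\bigcup_i2\overline{B_i}$. Property \ref{itemvi} then gives
\begin{equation*}
\omega^p_{\Omega_\varepsilon}(\partial\Omega_\varepsilon\setminus\partial\Omega)\le\varepsilon\sup_{y\in\partial\Omega_\varepsilon}\mathcal{E}(p-y)\le\varepsilon\,\mathcal{E}\big(d(p,\partial\Omega)\big).
\end{equation*}
The last bound uses $d(p,\partial\Omega_\varepsilon)\ge d(p,\partial\Omega)$, as noted in the remark after Lemma \ref{lema:modificacion_dominios_no_WR}. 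Letting $\varepsilon\to0$ yields the claim, and the convergence is uniform in $A$.

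The only delicate point is justifying the Markov identity. One must make sure that the common boundary points are regular for $\Omega$, which is exactly why the irregular set $W$ is covered by the balls. One must also make sure that the error term is supported in $\bigcup_i 2\overline{B_i}$. If the identity were only available for Borel sets of $\partial\Omega_\varepsilon$, I would apply it to $A\cap\partial\Omega$, decomposed into its part inside $\partial\Omega_\varepsilon$ and its part outside. The part outside is contained in $\bigcup_i\overline{B_i}$ and is already accounted for by the integral term.
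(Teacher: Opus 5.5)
The paper gives no proof of this lemma; it is quoted from \cite{HM}. Your route is the natural one: apply Lemma~\ref{lema:Markov_property_harmonic_measure} to the pair $\Omega_\varepsilon\subset\Omega$, bound the correction term by $\omega^p_{\Omega_\varepsilon}(\partial\Omega_\varepsilon\setminus\partial\Omega)$, and control that with property (vi). The paper applies the Markov lemma to the same pair in Section~\ref{sec4}.

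There is, however, a genuine gap in how you verify the hypothesis of Lemma~\ref{lema:Markov_property_harmonic_measure}. Item (iv) does not give $\partial\Omega_\varepsilon\cap\partial\Omega\subset\partial\Omega\setminus\bigcup_i\overline{B_i}$. It only gives $\partial\Omega_\varepsilon\cap\partial\Omega\subset(\partial\Omega\setminus\bigcup_i\overline{B_i})\cup(\partial\Omega\cap\bigcup_i\partial B_i)$. What is true is that $\partial\Omega_\varepsilon$ never meets an open ball $B_i$, since $\Omega_\varepsilon\cap\overline{B_i}=\emptyset$. So every common boundary point would be regular if $W\subset\bigcup_i B_i$. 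But Lemma~\ref{lema:modificacion_dominios_no_WR} only asserts $W\subset\bigcup_i\overline{B_i}$. An irregular point lying on a sphere $\partial B_i$, and in no open ball $B_j$, is compatible with (i)--(vi). Such a point can belong to $\partial\Omega_\varepsilon\cap\partial\Omega$, and there the regularity hypothesis fails. So ``the balls cover $W$'' does not give what you need.

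You can close this in either of two ways:
\begin{enumerate}
\item Check in the construction of \cite[P.~6.37]{HM} that $W$ is actually covered by the open balls.
\item Prove the Markov identity up to a polar set. By Kellogg's theorem, $W$ is polar. For $f\in C(\partial\Omega)$, the function $u(x)=\int f\,d\omega^x_\Omega$ is bounded and harmonic in $\Omega_\varepsilon$. It is continuous at the points of $\partial\Omega_\varepsilon\cap\Omega$, and it tends to $f(\xi)$ at every $\xi\in(\partial\Omega_\varepsilon\cap\partial\Omega)\setminus W$. Set $g=u$ on $\partial\Omega_\varepsilon\cap\Omega$ and $g=f$ on $\partial\Omega_\varepsilon\cap\partial\Omega$. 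The extended maximum principle (bounded harmonic functions do not see polar boundary sets, with decay at infinity in the unbounded case) gives $u(x)=\int g\,d\omega^x_{\Omega_\varepsilon}$. The Riesz representation theorem then turns this into your identity for Borel $A$.
\end{enumerate}

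There is a second, smaller error. The inequality $d(p,\partial\Omega_\varepsilon)\geq d(p,\partial\Omega)$ is false in general; the remark after Lemma~\ref{lema:modificacion_dominios_no_WR} states it the wrong way round. Since $\Omega_\varepsilon\subset\Omega$, one has $d(p,\partial\Omega_\varepsilon)\le d(p,\partial\Omega)$. The inequality is strict as soon as some sphere $\partial B_i$ comes closer to $p$ than $\partial\Omega$ does. Your own radius estimate repairs this. The centers lie on $\partial\Omega$ and $r(B_i)\lesssim\varepsilon^{1/(n-1)}$, so $d(p,\partial\Omega_\varepsilon)\geq d(p,\partial\Omega)-C\varepsilon^{1/(n-1)}\geq d(p,\partial\Omega)/2$ for small $\varepsilon$. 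The error term is then $\lesssim_n\varepsilon\, d(p,\partial\Omega)^{1-n}$.

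With these two repairs the argument is complete, and the convergence is indeed uniform in $A$.
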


\subsection{David and Mattila lattice}\label{sec2.3}

Here we introduce the dyadic lattice cubes of David-Mattila \cite{DM} associated to a Radon measure $\mu$. However, we will introduce a small variant which will be more appropriate for our purposes. This variant is obtained by a trivial adaptation of the original proof.

\begin{lema}[David, Mattila]

    Let $\mu$ be a compactly supported Radon measure in $\mathbb{R}^{n+1}$. Consider two constants $C_0>1$ and $A_0>5000C_0$ and denote $F=\operatorname{supp}\mu$. Then there exists a sequence of partitions of $F$ into Borel subsets called ``cubes", $Q\in\mathcal{D}_{\mu,k}$ with the following properties:
    \begin{itemize}
        \item For each integer $k\geq0$, $F$ is the disjoint union of the cubes $Q$, $Q\in\mathcal{D}_{\mu,k}$ and if $k<l$, $Q\in\mathcal{D}_{\mu,k}$ and $R\in\mathcal{D}_{\mu,l}$, then either $Q\cap R=\emptyset$ or else $R\subset Q$.
        \item The general position of the cubes $Q$ can be described as follows. For each $k\geq0$ and each cube $Q\in\mathcal{D}_{\mu,k}$, there is a ball $B(Q)=B(x_Q,r(Q))$ such that
        \begin{equation}\label{eq:control_radios_DM}
            x_Q\in F,\quad A_0^{-k}\leq r(Q)\leq C_0A_0^{-k},
        \end{equation}
        \begin{equation}\label{eq:cadena_inclsuion_cubos_bolas_DM}
            F\cap \tfrac{101}{100}B(Q)\subset Q\subset F\cap28B(Q)=F\cap B(x_Q,28r(Q)),
        \end{equation}
        and
        \begin{equation*}
            \text{the balls }~ 5B(Q),~Q\in\mathcal{D}_{\mu,k}~ \text{ are disjoint.}
        \end{equation*}
        \item The cubes $Q\in\mathcal{D}_{\mu,k}$ have small boundaries. That is, for each $Q\in\mathcal{D}_{\mu,k}$ and each integer $l\geq0$, set
        \begin{equation*}
            N_l^{\text{ext}}(Q)=\{ x\in F\setminus Q:d(x,Q)<A_0^{-k-l} \},
        \end{equation*}
        \begin{equation*}
            N_l^{\text{int}}(Q)=\{ x\in Q:d(x,F\setminus Q)<A_0^{-k-l} \},
        \end{equation*}
        and
        \begin{equation*}
            N_l(Q)=N_l^{\text{int}}(Q)\cup N_l^{\text{ext}}(Q).
        \end{equation*}
        Then
        \begin{equation}\label{eq:small_boundary_DM}
            \mu(N_l(Q))\leq (C^{-1}C_0^{-3(n+1)-1}A_0)^{-l}\mu(90B(Q)),
        \end{equation}
        for some universal constant $C$ that only depends on $n+1$.
        \item Denote by $\mathcal{D}^{db}_{\mu,k}$ the family of cubes $Q\in\mathcal{D}_{\mu,k}$ for which
        \begin{equation}\label{eq:bolas_db_800_DM}
            \mu(800B(Q))\leq C_0\mu(B(Q)).
        \end{equation}
        When $Q\in\mathcal{D}_{\mu,k}\setminus\mathcal{D}^{db}_{\mu,k}$, we have $r(Q)=A_0^{-k}$ and
        \begin{equation}\label{eq:des:bolas_no_db_800_DM}
            \mu(800B(Q))\leq C_0^{-l}\mu(800^{l+1}B(Q))~~\text{ for all }l\geq1~~\text{with}~~800^l\leq C_0.
        \end{equation}
    \end{itemize}
    
\end{lema}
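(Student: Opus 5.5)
The plan is to follow the original David--Mattila construction \cite{DM} essentially verbatim, and track only where the modified constants enter. The ball $B(Q)$ has radius between $A_0^{-k}$ and $C_0A_0^{-k}$, uses the doubling constant $800$ in \eqref{eq:bolas_db_800_DM}, and the cube sits inside $F\cap 28B(Q)$. The construction proceeds generation by generation.

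\textbf{Choosing centers and radii at generation $k$.} Take a maximal $A_0^{-k}$-separated family of points $x\in F$ (a finite family, since $\mu$ is compactly supported). For each such center, define $r$ as the smallest radius of the form $800^{l}A_0^{-k}$ with $800^l\le C_0$ that satisfies $\mu(800B(x,r))\le C_0\mu(B(x,r))$. If no such radius exists, set $r=A_0^{-k}$; the failure of the doubling inequality at every admissible scale then gives exactly \eqref{eq:des:bolas_no_db_800_DM} by iterating the non-doubling inequality. Then extract, via a Vitali-type greedy selection ordered by radius, a subfamily whose balls $5B$ are disjoint. Every point of $F$ still lies within distance about $28r$ of a selected center, which is where the constant $28$ comes from, given $A_0>5000C_0$. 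This yields \eqref{eq:control_radios_DM} and the disjointness of the balls $5B(Q)$.

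\textbf{Building the cubes.} Define the generation-$k$ cubes by assigning each point of $F$ to a nearby selected ball, and then make the partitions nested by induction on $k$. As in \cite{DM}, the cubes at generation $k$ are unions of the generation-$(k+1)$ cubes. We first form "pre-cubes" as Voronoi-like regions, then adjust them by absorbing children. Each adjustment moves the boundary by at most $\sum_{j>k}C A_0^{-j}\ll A_0^{-k}/100$. This preserves $F\cap\frac{101}{100}B(Q)\subset Q$, since the $5B$-disjointness keeps the balls $\frac{101}{100}B(Q)$ well inside their own cells, and it also preserves $Q\subset 28B(Q)$. This establishes \eqref{eq:cadena_inclsuion_cubos_bolas_DM}.

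\textbf{Small boundaries (main obstacle).} The small boundary estimate \eqref{eq:small_boundary_DM} is the delicate part. Following \cite{DM}, the boundaries of the pre-cubes at each generation are randomized: the radii used to separate cells are chosen among about $A_0$ candidate values in a range of size about $A_0^{-k}$, so that the thin annuli are disjoint. A pigeonhole argument then selects a radius whose $A_0^{-k-1}$-neighborhood carries at most about $A_0^{-1}$ times the mass of $90B(Q)$. Here $90B(Q)$ contains all the nearby competing balls thanks to the $28$-containment.

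The neighbors of $Q$ number at most $C\,C_0^{\,n+1}$ by volume packing, because radii are comparable up to $C_0$. This costs a factor of order $C_0^{3(n+1)+1}$ in the per-generation gain, which is precisely the base $C^{-1}C_0^{-3(n+1)-1}A_0$. Iterating over $l$ generations, with the nested adjustments controlled as above, gives geometric decay in $l$. The part that needs checking is the bookkeeping that the adjustments at later generations do not destroy the earlier boundary estimates. I would handle this exactly as in \cite{DM}: the later perturbations live in $N_{l'}$-neighborhoods with $l'\ge l$, so they only enlarge the relevant sets by a controlled geometric factor.
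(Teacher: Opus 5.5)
Your proposal takes the same route as the paper, which gives no independent proof either: it runs the David--Mattila construction with the doubling test in the choice of radii made on $800B(x,r)$ instead of $100B(x,r)$, so that iterating the failed doubling inequalities yields \eqref{eq:des:bolas_no_db_800_DM}, and it notes that the slack in DM's proof of $F\cap B(Q)\subset Q$ (their Lemma 3.55) already gives $F\cap\frac{101}{100}B(Q)\subset Q$, with the nesting and the small-boundary estimate inherited unchanged from \cite{DM}. One small correction: the later adjustments are of order $C_0A_0^{-k-1}$, which under $A_0>5000C_0$ is a non-negligible fraction of $A_0^{-k}/100$ rather than $\ll A_0^{-k}/100$, so the $\frac{101}{100}$ inclusion rests on your other observation, namely that the $5B$-disjointness lets the pre-cube of $Q$ contain a ball noticeably larger than $\frac{101}{100}B(Q)$.
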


Before going further, we  point out that the only changes that we made from the original David and Mattila construction are in \eqref{eq:cadena_inclsuion_cubos_bolas_DM}, \eqref{eq:bolas_db_800_DM} and \eqref{eq:des:bolas_no_db_800_DM}. For \eqref{eq:cadena_inclsuion_cubos_bolas_DM} we write $\frac{101}{100}B(Q)\cap F\subset Q$ instead of $B(Q)\cap F\subset Q$, which follows by sharpening Lemma 3.55 in \cite{DM} without any change in the construction. For \eqref{eq:bolas_db_800_DM} and \eqref{eq:des:bolas_no_db_800_DM} we write $800B(Q)$ instead of $100B(Q)$, which can be obtained by an appropriate change at the beginning of the construction of the lattice and other trivial changes.

We usually use the notation $\mathcal{D}_\mu=\bigcup_{k\geq0} \mathcal{D}_{\mu,k}$, $\mathcal{D}_{\mu}^{db}=\bigcup_{k\geq0}\mathcal{D}^{db}_{\mu,k}$ and, for some $R\in\mathcal{D}_{\mu,k}$ and $l\geq0$,
\begin{equation*}
    \mathcal{D}_{\mu,l}(R)=\{Q\in\mathcal{D}_{\mu,k+l}:Q\subset R \}.
\end{equation*}
Given an $R\in\mathcal{D}_{\mu,k}$, we also refer to $\operatorname{Ch}(R)=\mathcal{D}_{\mu,1}(R)$ as the children of $R$, and vice versa, if $Q\in\operatorname{Ch}(R)$, then we say that $R$ is the parent of $Q$.

We note that the constant $A_0$ can be chosen as
\begin{equation*}
    A_0=C_0^{c_{n+1}},
\end{equation*}
where $c_{n+1}$ depends only on the dimension and is large enough so that the right hand side of \eqref{eq:small_boundary_DM} tends to $0$ when $l\to\infty$.

Another observation is that for every $Q\in\mathcal{D}_{\mu,k}\setminus\mathcal{D}^{db}_{\mu,k}$, if $Q\in\operatorname{Ch}(\hat{Q})$ and
\begin{equation}\label{eq:l_0}
    l_0:=\max\{l\in\N:800^{l+1}\leq C_0\}=\lfloor \log_{800}C_0 \rfloor-1,
\end{equation}
if $C_0$ is large enough, from \eqref{eq:des:bolas_no_db_800_DM} one deduces
\begin{equation*}
   \mu(800 B(Q))\leq C_0^{-l_0}\mu(800B(\hat{Q})).
\end{equation*}
Given any constant $\kappa>0$, since the function $x^{-\lfloor \log_{800}x \rfloor}$ decreases faster than $x^{-\kappa}$, it is possible to take $C_0$ large enough, in terms of $\kappa$, so that
\begin{equation}\label{eq:C_0^-l_0_decrece_mas_que_A_0}
    C_0^{-l_0}<A_0^{-\kappa}.
\end{equation}
From this, the following lemma is immediate:

\begin{lema}\label{lema:acotacion_cadena_cubos_malos}
    Let $R\in\mathcal{D}_{\mu,k}$ and $Q\in \mathcal{D}_{\mu,k+l}$ be such that all cubes $Q\subsetneqq S\subsetneqq R$ satisfy $S\in\mathcal{D}_{\mu}\setminus\mathcal{D}^{db}_{\mu}$, then
    \begin{equation*}
        \mu(800B(Q))\leq A_0^{-\kappa(l-1)}\mu(800B(R)).
    \end{equation*}
\end{lema}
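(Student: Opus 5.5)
The plan is to iterate the one-step estimate for non-doubling cubes obtained just before the statement along the chain of cubes from $R$ down to $Q$. Write the chain as $R=S_0\supsetneq S_1\supsetneq\cdots\supsetneq S_l=Q$ with $S_j\in\mathcal{D}_{\mu,k+j}$ and $S_{j}\in\operatorname{Ch}(S_{j-1})$. This chain exists and is unique by the nested partition property of the David--Mattila lattice. By hypothesis, the intermediate cubes $S_1,\dots,S_{l-1}$ all belong to $\mathcal{D}_\mu\setminus\mathcal{D}^{db}_\mu$. (If $l\le 1$ the claim reduces to $\mu(800B(Q))\le A_0^{\kappa}\mu(800B(R))$ or to a trivial statement; I treat these at the end.)

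For each $j=1,\dots,l-1$ the cube $S_j$ is non-doubling with parent $S_{j-1}$. The consequence of \eqref{eq:des:bolas_no_db_800_DM} derived above, namely $\mu(800B(S_j))\le C_0^{-l_0}\mu(800B(S_{j-1}))$, together with the choice \eqref{eq:C_0^-l_0_decrece_mas_que_A_0}, then gives
\begin{equation*}
\mu(800B(S_j))\le A_0^{-\kappa}\,\mu(800B(S_{j-1})),\qquad 1\le j\le l-1.
\end{equation*}
Chaining these $l-1$ inequalities yields $\mu(800B(S_{l-1}))\le A_0^{-\kappa(l-1)}\mu(800B(R))$.

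The one remaining step is passing from $S_{l-1}$ to $Q=S_l$, which may be doubling, so I only want the trivial comparison $\mu(800B(Q))\le\mu(800B(S_{l-1}))$. This is the step that needs care, since it holds only if $800B(Q)\subset 800B(S_{l-1})$, and that containment is not immediate. The center $x_Q\in Q\subset S_{l-1}\subset B(x_{S_{l-1}},28r(S_{l-1}))$, and $r(Q)\le C_0A_0^{-k-l}$. Also $r(S_{l-1})\ge A_0^{-k-l+1}$, so $800r(Q)\le 800C_0A_0^{-1}r(S_{l-1})\le r(S_{l-1})/6$, using $A_0>5000C_0$. Hence $800B(Q)\subset B(x_{S_{l-1}},(28+1/6)r(S_{l-1}))\subset 800B(S_{l-1})$. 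The same computation handles the degenerate case $l=1$, giving $\mu(800B(Q))\le\mu(800B(R))=A_0^{0}\mu(800B(R))$, which is the claim. For $l=0$ we have $Q=R$ and the inequality would read $\mu(800B(R))\le A_0^{\kappa}\mu(800B(R))$, which is trivially true.

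Combining the chained estimate with this containment gives $\mu(800B(Q))\le A_0^{-\kappa(l-1)}\mu(800B(R))$. The only real obstacle is the bookkeeping just described: the final, possibly doubling, cube contributes no decay factor, which is exactly why the exponent is $l-1$ rather than $l$.
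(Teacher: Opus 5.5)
Your proof is correct and follows the paper's argument: the paper declares the lemma immediate from the one-step bound $\mu(800B(Q))\le C_0^{-l_0}\mu(800B(\hat Q))$ for non-doubling $Q$ with parent $\hat Q$, combined with \eqref{eq:C_0^-l_0_decrece_mas_que_A_0}, and you iterate exactly this over the $l-1$ intermediate cubes. Your explicit check that $800B(Q)\subset 800B(S_{l-1})$ (using $A_0>5000C_0$) for the final, possibly doubling, step supplies a detail the paper leaves implicit; the paper later uses the same parent--child monotonicity of $\mu(800B(\cdot))$ when it bounds $\operatorname{Bad}_1$.
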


\begin{lema}\label{lema:cardinal_childs_en_DM}
        If $Q\in \mathcal{D}_{\mu}$, then $\#\operatorname{Ch}(Q)\leq(29C_0A_0)^{n+1}$.
    \end{lema}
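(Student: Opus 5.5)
The bound $\#\operatorname{Ch}(Q)\leq(29C_0A_0)^{n+1}$ follows from a packing argument, using the disjointness of the balls $5B(Q')$ at a fixed generation together with the inclusion \eqref{eq:cadena_inclsuion_cubos_bolas_DM}.

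Let $Q\in\mathcal{D}_{\mu,k}$. Each child $Q'\in\operatorname{Ch}(Q)$ lies in $\mathcal{D}_{\mu,k+1}$, so by \eqref{eq:control_radios_DM} its ball $B(Q')$ has radius $r(Q')\geq A_0^{-k-1}$ and center $x_{Q'}\in Q'\subset Q$. Also by \eqref{eq:control_radios_DM}, $r(Q)\leq C_0A_0^{-k}$.

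\emph{Step 1: the children's balls sit in a fixed ball.} Since $Q\subset B(x_Q,28r(Q))$, each center satisfies $|x_{Q'}-x_Q|<28r(Q)\leq 28C_0A_0^{-k}$. Consider the balls $B(x_{Q'},A_0^{-k-1})\subset B(Q')\subset 5B(Q')$. These are pairwise disjoint, because the balls $5B(Q')$, $Q'\in\mathcal{D}_{\mu,k+1}$, are disjoint. Each of them is contained in
\[
B\bigl(x_Q,\,28C_0A_0^{-k}+A_0^{-k-1}\bigr)\subset B\bigl(x_Q,\,29C_0A_0^{-k}\bigr),
\]
using $A_0^{-k-1}\leq A_0^{-k}\leq C_0A_0^{-k}$.

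\emph{Step 2: compare volumes.} The disjoint balls of radius $A_0^{-k-1}$ all lie in a ball of radius $29C_0A_0^{-k}$, so comparing Lebesgue measures gives
\[
\#\operatorname{Ch}(Q)\,\bigl(A_0^{-k-1}\bigr)^{n+1}\leq\bigl(29C_0A_0^{-k}\bigr)^{n+1},
\]
that is, $\#\operatorname{Ch}(Q)\leq(29C_0A_0)^{n+1}$.

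The argument has no real obstacle. The only point needing care is the constant: the enlargement $28r(Q)+A_0^{-k-1}\leq 29C_0A_0^{-k}$ must absorb the child radius into one extra unit, and it does because $C_0>1$.
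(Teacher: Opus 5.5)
Your proof is correct and is essentially the paper's packing argument: the children's balls are pairwise disjoint at generation $k+1$, they lie in a ball of radius $29C_0A_0^{-k}$ about $x_Q$, and comparing Lebesgue measures gives $(29C_0A_0)^{n+1}$. The only difference is cosmetic: the paper obtains the containment from $29B(Q')\subset 29B(Q)$ and packs the full balls $B(Q')$, whereas you shrink to balls of radius exactly $A_0^{-k-1}$ and bound the distance between centers directly.
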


    \begin{proof}
        Let $Q'\in\operatorname{Ch}(Q)$, then $Q'\subset Q$ and $29B(Q')\subset 29B(Q)$. Since $\{B(Q')\}_{Q'\in\operatorname{Ch}(Q)}$ are disjoint and taking into account \eqref{eq:control_radios_DM}, then
        \begin{equation*}
            \begin{aligned}
                \#\operatorname{Ch}(Q)c_{n+1}'\left(A_0^{-k(Q)-1}\right)^{n+1}&\leq\sum_{Q'\in\operatorname{Ch}(Q)}\mathcal{L}^{n+1}(B(Q'))=\mathcal{L}^{n+1}\left(\bigcup_{Q'\in\operatorname{Ch}(Q)}B(Q')\right)\\
                &\leq \mathcal{L}^{n+1}(29B(Q))\leq c_{n+1}'\left( 
                29C_0 A_0^{-k(Q)} \right)^{n+1},
            \end{aligned}
        \end{equation*}
        which implies the result.
    \end{proof}

\begin{lema}\label{lema:bola_asociada_DM_amb_bones_propietats}
    Let $\mu$ be a non-zero Radon measure in $\mathbb{R}^{n+1}$, $a>0$, $0<\gamma<1$,
    $C_0$ large enough depending on $a$ and the dimension, and let $B(\xi,r)$ be an $(a,P_{\mu,\gamma})$-doubling ball with $A_0^{-j-1}\leq r \leq A_0^{-j}$. If $\xi\in Q\in\mathcal{D}_{\mu,j}$, then $Q\in\mathcal{D}^{db}_{\mu}$. Moreover, $B(Q)$ is a $(C,P_{\mu,\gamma})$-doubling ball for some $C=C(n,\gamma,a,C_0)$.
\end{lema}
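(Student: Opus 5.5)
Two things have to be shown: first that $Q\in\mathcal{D}^{db}_\mu$, and then that $B(Q)$ is $(C,P_{\gamma,\mu})$-doubling. Both come from comparing $B(\xi,r)$ with the balls $B(Q)$, $800B(Q)$ and their dilates. The basic geometric fact is the following. Since $\xi\in Q\subset 28B(Q)$, \eqref{eq:control_radios_DM} gives
$$|\xi-x_Q|<28\,r(Q), \qquad A_0^{-j}\le r(Q)\le C_0A_0^{-j}, \qquad A_0^{-j-1}\le r\le A_0^{-j}.$$
Hence $B(\xi,r)\subset B(x_Q,(28+1)r(Q))\subset 800B(Q)$. In the other direction, $800^{m}B(Q)\subset B(\xi,\,(800^m+28)C_0A_0^{-j})\subset 2^{i}B(\xi,r)$ with $2^i\approx 800^m C_0A_0$. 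The main point is that the ratio of radii between $B(\xi,r)$ and $B(Q)$ is bounded by $C_0A_0$, and this is only polynomial in $C_0$ because $A_0=C_0^{c_{n+1}}$.

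\textbf{Step 1: $Q$ is doubling.} Suppose $Q\notin\mathcal{D}^{db}_\mu$. Then $r(Q)=A_0^{-j}$ and, by \eqref{eq:des:bolas_no_db_800_DM} with $l=l_0$,
$$\mu(B(\xi,r))\le\mu(800B(Q))\le C_0^{-l_0}\mu(800^{l_0+1}B(Q)).$$
Choose $i$ minimal with $800^{l_0+1}B(Q)\subset 2^iB(\xi,r)$. Then $2^i\lesssim 800^{l_0+1}A_0\le C_0A_0$, since $r\ge A_0^{-j-1}$. The $(a,P_{\gamma,\mu})$-doubling hypothesis gives
$$\Theta^n_\mu(2^iB(\xi,r))\le 2^{i\gamma}P_{\gamma,\mu}(B(\xi,r))\le a\,2^{i\gamma}\,\Theta^n_\mu(B(\xi,r)),$$
so $\mu(2^iB(\xi,r))\le a\,2^{i(n+\gamma)}\mu(B(\xi,r))$. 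Combining,
$$\mu(B(\xi,r))\le C_0^{-l_0}\,a\,(CC_0A_0)^{n+1}\,\mu(B(\xi,r)).$$
Now \eqref{eq:C_0^-l_0_decrece_mas_que_A_0}, with $\kappa$ large (say $\kappa>2(n+1)+2$), makes the constant smaller than $1$ once $C_0$ is large depending on $a$. Here $\mu(B(\xi,r))>0$ because the ball is doubling and $\mu\ne0$. This is a contradiction, so $Q\in\mathcal{D}^{db}_\mu$.

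\textbf{Step 2: $B(Q)$ is $P$-doubling.} By Step 1, $\mu(800B(Q))\le C_0\mu(B(Q))$, so
$$\mu(B(\xi,r))\le\mu(800B(Q))\le C_0\,\mu(B(Q)).$$
For each $k\ge0$, $2^kB(Q)\subset 2^{k+i_0}B(\xi,r)$ with $2^{i_0}\approx C_0A_0$. Since $r(2^kB(Q))\approx 2^{k}r(Q)$, which is comparable within a factor $C_0A_0$ to $2^{k+i_0}r$,
$$\Theta^n_\mu(2^kB(Q))\lesssim_{C_0}\Theta^n_\mu(2^{k+i_0}B(\xi,r)).$$
Summing with weights $2^{-k\gamma}=2^{i_0\gamma}2^{-(k+i_0)\gamma}$ gives
$$P_{\gamma,\mu}(B(Q))\lesssim_{C_0}P_{\gamma,\mu}(B(\xi,r))\le a\,\Theta^n_\mu(B(\xi,r))\lesssim_{C_0} a\,\Theta^n_\mu(B(Q)),$$
where the last step uses $r\ge A_0^{-j-1}\ge r(Q)/(C_0A_0)$ and $\mu(B(\xi,r))\le C_0\mu(B(Q))$.

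The main obstacle is Step 1: making the exponents match, so that the decay $C_0^{-l_0}$ beats the polynomial losses $(C_0A_0)^{n+1}$ coming from the radius mismatch. This is exactly what \eqref{eq:C_0^-l_0_decrece_mas_que_A_0} is designed for. I would fix $\kappa$ depending only on $n$ (say $\kappa>2(n+1)+2$) and then take $C_0$ large enough to also absorb $a$.
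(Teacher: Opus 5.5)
Your proposal is correct and follows the paper's proof essentially step by step. In Step 1, assuming $Q\notin\mathcal{D}^{db}_\mu$, you apply \eqref{eq:des:bolas_no_db_800_DM} with $l=l_0$ and place $800^{l_0+1}B(Q)$ inside a dilate $2^iB(\xi,r)$ with $2^i\lesssim C_0A_0$. The superpolynomial decay of $C_0^{-l_0}$ then beats the factor $a(4C_0A_0)^{n+1}$ from the $P$-doubling of $B(\xi,r)$, which is exactly the paper's condition \eqref{eq:condicio_C_0_gran}. In Step 2 you compare $2^kB(Q)$ with $2^{k+i_0}B(\xi,r)$ and close with $\mu(B(\xi,r))\le\mu(800B(Q))\le C_0\,\mu(B(Q))$, which is also how the paper argues.
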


\begin{proof}
    By the definition of $(a,P_{\mu,\gamma})$-doubling, it is easy to check that $\mu(B(\xi,r))>0$, and for all $m\in\mathbb{N}$ it holds
    \begin{equation}\label{eq:P-doubling_implica_doubling}
        \mu(B(\xi,2^mr))\leq 2^{m(n+1)}a\mu(B(\xi,r)),
    \end{equation}
    On the other hand, by \eqref{eq:C_0^-l_0_decrece_mas_que_A_0}, we can take $C_0$ large enough so that
    \begin{equation}\label{eq:condicio_C_0_gran}
        C_0^{-l_0}<a^{-1}(4C_0A_0)^{-(n+1)},
    \end{equation}
    where $l_0$ is the constant defined in \eqref{eq:l_0}. If we suppose by contradiction that $Q\not\in\mathcal{D}_{\mu}^{db}$, then by \eqref{eq:des:bolas_no_db_800_DM}
    \begin{equation*}
        \mu(800B(Q))\leq C_0^{-l_0}\mu(800^{l_0+1}B(Q)).
    \end{equation*}
    Recall that
    \begin{equation*}
        A_0^{-j-1}\leq r \leq A_0^{-j}\quad\text{and}\quad A_0^{-j}=r(Q)\leq C_0A_0^{-j},
    \end{equation*}
    since $\xi\in Q$, it is clear that $B(\xi,r)\subset 800B(Q)$. Conversely, we claim that  $800^{l_0+1}B(Q)\subset B(\xi,2^{m_0+1}r)$. Indeed, if $y\in 800^{l_0+1}B(Q)\subset C_0 B(Q)$, then
    \begin{flalign*}
         |y-\xi|&\leq|y-x_Q|+|x_Q-\xi|\leq C_0r(Q)+28r(Q)=(C_0+28)A_0^{-j}\\
        &\leq  2C_0A_0^{-j}\leq 2C_0A_0r\leq 2^{m_0+1}r,
    \end{flalign*}
    where $x_Q$ is the center of $B(Q)$, $m_0=\lceil \log_2 C_0A_0\rceil$ and $r(Q)=A_0^{-j}$ because $Q\not\in\mathcal{D}_\mu^{db}$. So $800^{l_0+1}B(Q)\subset B(\xi,2^{m_0+1}r)$, which together with \eqref{eq:P-doubling_implica_doubling} implies that
    \begin{flalign*}
        \mu(800B(Q))&\leq C_0^{-l_0}\mu(800^{l_0+1}B(Q))\leq C_0^{-l_0}\mu(B(\xi,2^{m_0+1}r))\\
        &\leq C_0^{-l_0}2^{(m_0+1)(n+1)}a\mu(B(\xi,r))\\&
        \leq C_0^{-l_0}(4C_0A_0)^{n+1}a\mu(800B(Q))<\mu(800B(Q)).
    \end{flalign*}
    This contradiction implies that for a $C_0$ satisfying \eqref{eq:condicio_C_0_gran}, then $Q\in\mathcal{D}_\mu^{db}$.

    To show that $B(Q)$ is also $(C,P_{\mu,\gamma})$-doubling, take $i\geq0$ and $y\in 2^iB(Q)$,
    \begin{flalign*}
        |y-\xi|&\leq|y-x_Q|+|x_Q-\xi|\leq 2^ir(Q)+28r(Q)<2^{i+6}r(Q)\\
        &\leq 2^{i+6}C_0A_0^{-j}\leq 2^{i+m_0+6}r,
    \end{flalign*}
    so we can write
    \begin{flalign*}
        P_{\mu,\gamma}(B(Q))&=\sum_{i\geq0}2^{-\gamma i}\Theta^n_\mu(2^iB(Q))\leq \sum_{i\geq0}2^{-\gamma i}\dfrac{\mu(B(\xi,2^{i+6+m_0}r))}{(2^ir)^n}\\
        &=(2^{m_0+6})^n2^{\gamma(m_0+6)}\sum_{i\geq0}2^{-\gamma(i+6+m_0)}\Theta^n_\mu(B(\xi,2^{i+6+m_0}r))\\
        &\leq(2^{m_0+6})^n2^{\gamma(m_0+6)}a\Theta^n_\mu(B(\xi,r))\leq(2^{m_0+6})^n2^{\gamma(m_0+6)}a\dfrac{\mu(800B(Q))}{r^n}\\
        &\leq (2^{m_0+6})^n2^{\gamma(m_0+6)}aC_0^{n+1}A_0^n\Theta^n_\mu(B(Q)),
    \end{flalign*}
    where we used the fact that $Q\in\mathcal{D}_\mu^{db}$ in the last inequality. Observe that the $(C,P_{\mu,\gamma})$-doubling constant is bounded by
    \begin{equation*}
        C(n,\gamma,a,C_0)=(C_0A_0)^{2(n+1)+\gamma}2^{7n+7\gamma}a.
    \end{equation*}
\end{proof}

We will take 
\begin{equation*}
    \gamma=\frac{1+\gamma_0}{2},\quad a=a(\gamma,n),
\end{equation*} 
where $\gamma_0$ and $a$ are from Lemma \ref{lema:bolas_P_doblantes_medida_armonica}, so that they are absolute constants.

\section{Key theorem}
Now, we want to show the key results in order to get a contradiction since it connects the hypotheses we have assumed together with the rectifiable set that we have mentioned in the introduction. 

Our main result in this section is the following.
\begin{teo}\label{teo:main1}
    Let $\mu$ be a Radon measure in $\mathbb{R}^{n+1}$. If $B\subset\mathbb{R}^{n+1}$ is a $(C_1, P_\mu)$-doubling ball satisfying:
    \begin{enumerate}[label=\alph*)]
        \item For some $C_2>0$, there exists $G_B\subset 3B$ such that
        \begin{equation}\label{eq:acotacion-HL-Riesz}
            \sup_{0<r\leq 8 r(B)} \dfrac{\mu(B(x,r))}{r^n} +\mathcal{R}_\ast(\chi_{3B}\mu)(x)\leq C_2\Theta_\mu^n(B)
        \end{equation}
        for all $x\in 2B\cap G_B$. Moreover, $\mu(3B\setminus G_B)\leq\delta\mu(3B)$ for some $\delta>0$.
        \item There exists a ball $B_1$ centered at $B$ and there exists $\delta_0,\delta_1,\alpha>0$ such that $\Theta^n_\mu(B_1)\geq\alpha\Theta_\mu^n(B)$ with $\delta_0r(B)\leq r(B_1)\leq\delta_1 r(B)$.
        \item For some $\varepsilon>0$,
        \begin{equation*}
            \int_{2B\cap G_B}|\mathcal{R}\mu(x)-m_{\mu,2B\cap G_B}(\mathcal{R}\mu)|^2\,d\mu(x)\leq\varepsilon\Theta^n_\mu(B)^2\mu(B).
        \end{equation*}
    \end{enumerate}
    Suppose that $0<\delta_1<1/2$ small enough depending on $\alpha,C_1$ and $n$, that $\varepsilon$ is small enough, depending on $C_1,\alpha$ and $\delta_0$, and that $\delta>0$ is arbitrarily small. Then, there is some rectifiable set $\Gamma$ and some $\tau>0$ such that
    \begin{equation*}
        \mu(\Gamma\cap 2B)\geq\tau \mu(B),
    \end{equation*}
    with $\tau$ depending on the above constants. 
\end{teo}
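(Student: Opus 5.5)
We plan to derive Theorem \ref{teo:main1} from the rectifiability criterion \cite[Th.~1.5]{T}, following the scheme of \cite[Th.~3.3]{AMT}. Roughly, that criterion says the following. Suppose $\mu$ has polynomial growth with constant $C\Theta_\mu^n(B)$ on a large portion of $B$, $B$ is $P$-doubling, $\mathcal R(\chi_{2B}\mu)$ is bounded on $L^2$ on that portion, and the $L^2$ oscillation of $\mathcal R\mu$ is small relative to $\Theta^n_\mu(B)^2\mu(B)$. Suppose further that the measure is not too concentrated in a small ball. Then there is a uniformly rectifiable set $\Gamma$ with $\mu(\Gamma\cap B)\gtrsim\mu(B)$. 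Our task is therefore to manufacture each hypothesis from a)--c). Hypothesis b) will play the role of a non-degeneracy condition: it excludes the situation where $\mu$ looks like a flat measure spread uniformly yet $\mathcal R\mu$ has small oscillation only because $\mu$ is nearly symmetric.

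\textbf{Step 1 (growth and $L^2$ boundedness on a good set).} Let $\nu=\mu|_{2B\cap G_B}$. By a), $\nu(B(x,r))\le C_2\Theta^n_\mu(B)r^n$ for $x\in 2B\cap G_B$ and $r\le 8r(B)$. Since $B$ is $(C_1,P_\mu)$-doubling, this extends to all $r$, and then to all centers up to a constant factor. Also by a), $\mathcal R_*(\chi_{3B}\mu)\le C_2\Theta_\mu^n(B)$ on $\operatorname{supp}\nu$, and hence $\mathcal R_*\nu$ is controlled there as well, up to the contribution of $3B\setminus G_B$, which has mass at most $\delta\mu(3B)$. We then apply the non-homogeneous $T(1)$/$Tb$ theorem of Nazarov--Treil--Volberg, in the form of its corollary that a pointwise bound on $\mathcal R_*\nu$ together with polynomial growth implies $\|\mathcal R_\nu\|_{L^2(\nu)\to L^2(\nu)}\lesssim C_2\Theta^n_\mu(B)$. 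If the error set from $3B\setminus G_B$ causes trouble, we first remove a further small piece via a Chebyshev argument, using $\delta$ arbitrarily small.

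\textbf{Step 2 (small oscillation for $\nu$).} From c), and since $\mathcal R\mu-\mathcal R\nu=\mathcal R(\mu-\nu)$, we need to control $\mathcal R(\mu|_{\mathbb R^{n+1}\setminus 2B})$ and $\mathcal R(\mu|_{2B\setminus G_B})$ in $L^2(\nu)$ modulo constants. For the far part, $P$-doubling shows that its oscillation on $2B$ is bounded by $C_1\Theta_\mu^n(B)$. This is not small, however, so, as in \cite{AMT}, we will use the version of \cite[Th.~1.5]{T} that allows the smooth, large-scale part to be absorbed: the criterion there is stated for $\mathcal R\mu$ with $\mu$ $P$-doubling, not for $\mathcal R\nu$. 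For the part $2B\setminus G_B$, the $L^2$ boundedness from Step 1, applied via duality (namely $\|\mathcal R(\chi_E\mu)\|_{L^2(\nu)}\lesssim\Theta\,\mu(E)^{1/2}$ once growth of $\mu$ is known on $E$, or after discarding more mass), gives a contribution of order $\delta^{1/2}\Theta^n_\mu(B)^2\mu(B)$. Taking $\delta\ll\varepsilon$ closes this step.

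\textbf{Step 3 (non-concentration, the main obstacle).} The criterion \cite[Th.~1.5]{T} requires that $\mu$ is not essentially concentrated near an $n$-plane in a degenerate way; technically, it needs a lower density/``thickness'' bound in a ball comparable to $B$. Condition b) supplies this, since $\Theta_\mu(B_1)\ge\alpha\Theta_\mu(B)$ with $r(B_1)\le\delta_1r(B)$. We must check that this contradicts the alternative in the proof of \cite{T}. In that alternative, smallness of the oscillation forces $\mu$ to be close to a flat measure $c\,\mathcal H^n|_L$ at scale $B$. A flat measure has $\Theta(B_1)\approx\Theta(B)$, so b) is compatible with flatness. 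The real role of b) is therefore to rule out the opposite degenerate scenario, in which $\mu(B)$ is carried by a tiny ball. That scenario is excluded because, by growth, $\mu(B_1)\le C_2\Theta(B)r(B_1)^n\le C_2\delta_1^n\mu(B)$ while $\mu(B_1)\ge\alpha\delta_0^n\mu(B)$. This is where the requirement that $\delta_1$ is small in terms of $\alpha$ and $C_1$ enters, and where $\varepsilon$ is chosen small in terms of $\alpha,\delta_0$.

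I expect the delicate point to be matching exactly the hypotheses of \cite[Th.~1.5]{T}. Following \cite[Th.~3.3]{AMT}, I would first try to verify directly the ``$\tau$-big pieces'' hypotheses there with $B_1$ as the witnessing ball, and then run the same compactness/contradiction argument: let $\varepsilon,\delta\to0$ and extract a weak limit measure with constant $\mathcal R$ on its support. Such a measure is flat by the results on measures with constant Riesz transform. The mass estimate from b) then prevents the limit from vanishing on $2B$, yielding $\mu(\Gamma\cap 2B)\ge\tau\mu(B)$.
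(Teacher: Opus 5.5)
\textbf{There is a genuine gap.} Your overall plan matches the paper's: reduce to \cite[Th.~1.5]{T} following \cite[Th.~3.3]{AMT}. But the central analytic step is missing.

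\textbf{Steps 1--2 are circular.} In Step 1 you assert that $\mathcal R_*\nu$, with $\nu=\mu|_{2B\cap G_B}$, is controlled on $\operatorname{supp}\nu$ up to the contribution of $3B\setminus G_B$, because that set has mass at most $\delta\mu(3B)$. Small mass gives no pointwise control of a singular integral. For $x\in 2B\cap G_B$ the hypotheses only yield $|\mathcal R_\epsilon(\mu|_{3B\setminus(2B\cap G_B)})(x)|\lesssim C_2\Theta^n_\mu(B)\log(r(B)/\epsilon)$. Your proposed Chebyshev fix would need an a priori $L^1$ or weak-type bound for $\mathcal R_*(\mu|_{3B\setminus G_B})$ on $G_B$, which is essentially what you are trying to prove.

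Step 2 has the same problem. The bound $\|\mathcal R(\chi_E\mu)\|_{L^2(\nu)}\lesssim\Theta\,\mu(E)^{1/2}$, with $E=3B\setminus G_B$, requires boundedness of $\mathcal R$ from $L^2(\mu|_E)$ to $L^2(\nu)$. That is boundedness for a measure charging $E$, where no growth is known, and $L^2(\nu)$-boundedness does not give it.

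Moreover, you never fix which measure is fed into the criterion. Hypothesis a) is built for $\nu$, while c) is to be used for $\mu$, and $\mu$ itself does not satisfy a) on $2B$.

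\textbf{The missing idea is the suppressed Riesz kernel on $\sigma=\mu|_{3B}$.} Let $H=\bigcup_{x\in M_a}B(x,\rho_M(x))\cup\bigcup_{x\in R_b}B(x,\rho_R(x))$, built from the sets where $\mathcal M_n\sigma>a\Theta^n_\sigma(B)$ and $\mathcal R_*\sigma>b\Theta^n_\sigma(B)$. The pointwise bounds in a) force $2B\cap H\subset 2B\setminus G_B$ for $a,b$ large. Hence $\Phi=\operatorname{dist}(\cdot,H^c)$ vanishes on $2B\cap G_B$, while $\mathcal R_{\Phi,*,\sigma}1\lesssim\Theta^n_\sigma(B)$ everywhere. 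The Nazarov--Treil--Volberg $Tb$ theorem for suppressed operators then gives $L^2(\sigma)$-boundedness of $\mathcal R_{\Phi,\sigma}$, with no growth needed on $\sigma$.

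Since $K_\Phi(x,\cdot)$ is the Riesz kernel when $\Phi(x)=0$, this single estimate yields both facts you need:
\begin{itemize}
\item the $L^2$-boundedness of $\mathcal R$ on $\mu|_{2B\cap G_B}$;
\item an $L^2(\mu|_{2B\cap G_B})$ bound for $\mathcal R(\mu|_{3B\setminus G_B})$ by a power of $\delta$.
\end{itemize}

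\textbf{Which measure to use.} The paper applies the criterion to $\tilde\mu=\mu|_{G_B\cup(3B)^c}$. Keeping the far part gives $\mathcal R\tilde\mu=\mathcal R\mu-\mathcal R(\mu|_{3B\setminus G_B})$, so your concern that the far part has non-small oscillation disappears: it is already accounted for in hypothesis c). One then checks directly that:
\begin{itemize}
\item $B$ is $(2C_1,P_{\tilde\mu})$-doubling;
\item the growth bound holds at every $x\in 2B$, including $x\notin\overline{G_B}$;
\item b) holds for $\tilde\mu$ with $\alpha/2$ once $\delta\lesssim\alpha\delta_0^n/C_1$.
\end{itemize}

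\textbf{Step 3 is not needed, and its reasoning does not work.} Hypothesis b) is simply a hypothesis of \cite[Th.~1.5]{T}, to be transferred to $\tilde\mu$. Your two bounds $\alpha\delta_0^n\mu(B)\le\mu(B_1)\le C_2\delta_1^n\mu(B)$ are not contradictory, so they exclude nothing. A compactness argument producing a flat limit measure would not by itself produce a rectifiable $\Gamma$.
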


The proof consists in reducing Theorem 3.1 to Theorem 1.5 of \cite{T}, which we recall now:

\begin{teo}\label{teo:tolsa1}
    Let $\mu$ be a Radon measure in $\mathbb{R}^{n+1}$. If $B\subset\mathbb{R}^{n+1}$ is a $(C_1, P_\mu)$-doubling ball satisfying:
    \begin{enumerate}[label=\alph*)]
        \item $\mathcal{R}(\mu|_{2B})$ is bounded in $L^2(\mu|_{2B})$ and for some $C_2>0$,  $\norm{\mathcal{R}(\mu|_{2B})}_{L^2(\mu|_{2B})\to L^2(\mu|_{2B})}\leq C_2\Theta^n_\mu(B)$ and
        \begin{equation*}
            \sup_{0<r\leq 4 r(B)} \dfrac{\mu(B(x,r))}{r^n} \leq C_2\Theta_\mu^n(B)
        \end{equation*}
        for all $x\in 2B$.
        \item There exists a ball $B_1$ centered at $B$ and there exists $\delta_0,\delta_1,\alpha>0$ such that $\Theta^n_\mu(B_1)\geq\alpha\Theta_\mu^n(B)$ with $\delta_0r(B)\leq r(B_1)\leq\delta_1 r(B)$.
        \item For some $\varepsilon>0$
        \begin{equation*}
            \int_{2B}|\mathcal{R}\mu(x)-m_{\mu,2B}(\mathcal{R}\mu)|^2\,d\mu(x)\leq\varepsilon\,\Theta^n_\mu(B)^2\mu(B).
        \end{equation*}
    \end{enumerate}
        Suppose that $0<\delta_1<1/2$ small enough depending on $\alpha,C_1$ and $n$, and that $\varepsilon$ is small enough, depending on $C_1,\alpha$ and $\delta_0$. Then there is some rectifiable set $\Gamma$ and some $\tau>0$ such that
    \begin{equation*}
        \mu(\Gamma\cap 2B)\geq\tau \mu(B),
    \end{equation*}
    with $\tau$ depending on the above constants.
\end{teo}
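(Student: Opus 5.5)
Since this is \cite[Th.~1.5]{T}, the plan is to reproduce the architecture of that proof: a corona-type stopping-time construction inside $B$, followed by a compactness/flatness argument. I would work with the David--Mattila lattice $\mathcal{D}_\mu$ from Section \ref{sec2.3}, normalize so that $\Theta^n_\mu(B)=1$, and pick a doubling root cube $R_0$ comparable to $B$; this is possible because $B$ is $(C_1,P_\mu)$-doubling, using Lemma \ref{lema:bola_asociada_DM_amb_bones_propietats}. Starting from $R_0$, I would stop at maximal cubes of three types:
\begin{itemize}
\item \emph{high density} cubes, with $\Theta_\mu(2B(Q))\geq A$;
\item \emph{low density} cubes, with $\Theta_\mu(2B(Q))\leq\eta$;
\item cubes where the local oscillation of $\mathcal{R}\mu$ is not small.
\end{itemize}
The set $\Gamma$ will be a Lipschitz graph (or a finite union of them) covering the part of $R_0$ that is never stopped. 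Hypothesis a) already excludes high density cubes inside $2B$, since $\Theta_\mu(B(x,r))\leq C_2$ there. The big-oscillation cubes carry small total measure by Chebyshev applied to c), once $\varepsilon$ is small with respect to the other parameters.

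The heart of the argument is showing that low density cubes cannot cover most of $R_0$. Here I would use the non-degeneracy hypothesis b): the ball $B_1$ has density at least $\alpha$ at scale $r(B_1)\leq\delta_1 r(B)$. I would run a variational/energy argument in the style of Eiderman--Nazarov--Volberg and Reguera--Tolsa. Suppose almost all the mass of $R_0$ sits in low density cubes. Replace $\mu$ on these cubes by a smoothed measure $\nu$ with density $\lesssim\eta$. The $L^2(\mu|_{2B})$-boundedness of $\mathcal{R}$ from a) controls the error $\mathcal{R}\mu-\mathcal{R}\nu$ in $L^2$. The small oscillation c) then says that $\mathcal{R}\nu$ is almost constant on most of $2B$. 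Testing against a bump adapted to $B_1$, and using that $\delta_1$ is small depending on $\alpha,C_1$, the concentration of mass $\alpha r(B_1)^n$ in $B_1$ produces a Riesz transform oscillation of size $\gtrsim\alpha$ across $B_1$. This contradicts c) once $\varepsilon$ is small in terms of $\alpha,\delta_0,C_1$. I expect this step to be the main obstacle. The technical difficulty is to make the maximum-principle/variational inequality for the localized measure quantitative and compatible with the small-boundary property \eqref{eq:small_boundary_DM}.

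Once the tree of non-stopped cubes is known to carry mass $\geq\tau\mu(B)$, on it $\mu$ has density comparable to $1$ (between $\eta$ and $A$), $\mathcal{R}$ is bounded, and $\mathcal{R}\mu$ has small oscillation at all scales of the tree. Next I would show that this forces small $\beta$-numbers at every cube of the tree. The tool is a compactness argument: a weak limit of rescaled measures with bounded density and constant Riesz transform is a flat measure, by the blow-up characterization in the spirit of Nazarov--Tolsa--Volberg / Girela-Sarri\'on--Tolsa. Finally, a David--Semmes/L\'eger type construction of a Lipschitz graph $\Gamma$ approximating the support at all tree scales yields $\mu(\Gamma\cap 2B)\geq\tau\mu(B)$.
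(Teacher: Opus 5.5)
You should know first that the paper gives no proof of this statement. It is quoted from \cite[Th.~1.5]{T} and used as a black box; the only argument in the paper is the reduction of Theorem~\ref{teo:main1} to it. So your outline has to stand on its own. Your treatment of high-density cubes (excluded by a)) and of big-oscillation cubes (Chebyshev applied to c)) is fine, but the low-density step, which carries the whole argument, does not work as described.

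\emph{The error $\mathcal{R}\mu-\mathcal{R}\nu$ is not small.} Replacing $\mu$ on the LD cubes by a smoothed $\nu$ is not a small perturbation in any sense that a) controls. The difference $\mu-\nu$ is not of the form $f\mu$. Even if $\nu=g\mu$, hypothesis a) only gives $\|\mathcal{R}((1-g)\mu)\|_{L^2(\mu|_{2B})}\lesssim C_2\,\Theta^n_\mu(B)\,\|1-g\|_{L^2(\mu)}$. That is of the natural size $\Theta^n_\mu(B)\mu(B)^{1/2}$, not $\varepsilon^{1/2}$ times it. Equality of masses gives cancellation only far from each cube. On the cube itself, $\mathcal{R}(\chi_Q\mu)$ need not be small, because descendants of an LD cube may have density up to $C_2\Theta^n_\mu(B)$. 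So c) does not pass from $\mu$ to $\nu$.

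\emph{The oscillation principle is false as stated.} You claim that mass $\alpha\,\Theta^n_\mu(B)\,r(B_1)^n$ in $B_1$ forces an oscillation $\gtrsim\alpha\,\Theta^n_\mu(B)$ of the Riesz transform across $B_1$. For $\mu=\mathcal{H}^n|_L$, with $L$ a hyperplane, every ball centred on $L$ has the same density, yet $\mathcal{R}\mu\equiv 0$ on $L$. What concentration of mass does give, via $\operatorname{div}\mathcal{R}\nu=\mathcal{H}^n(S^n)\,\nu$, is a lower bound for the flux of $\mathcal{R}\nu$ through spheres. That controls its size off the support or its jump across it, not its oscillation \emph{on} $\supp\mu$, which is all that c) controls. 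Converting one into the other is exactly the maximum-principle/variational argument your sketch leaves out.

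\emph{You never use $\delta_1\ll\alpha$, and that is the entire content of b).} Cover $B$ by at most $C_n\delta_1^{-(n+1)}$ balls of radius $\delta_1 r(B)$ centred in $B$. This shows that \emph{every} measure satisfies b) with $\delta_0=\delta_1$ and $\alpha=c_n\delta_1$. So b) carries information only because $\alpha\gg\delta_1$: it says that somewhere the density ratio does not decay from scale $r(B)$ to scale $\delta_1 r(B)$ at a rate governed by $C_1$ and $n$. You assert that this smallness is used, but nothing in your mechanism depends on it. An argument that never exploits this quantitative non-decay would, if it worked, remove b) altogether. Note that in this paper a) and c) are checked fairly directly for harmonic measure, while b) needs the whole touching-point construction.

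\emph{The final compactness step has the same problem.} You invoke the rigidity that limits with constant Riesz transform are flat. That is known for AD-regular measures (Nazarov--Tolsa--Volberg), not under upper density bounds alone. Your lower bound $\eta$ holds only on tree cubes. It does not hold on the stopped descendants of a tree cube, which also appear in the blow-up limit. So this step also needs a real argument rather than a compactness citation.
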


\begin{proof}[Proof of Theorem \ref{teo:main1}]
    Let $\tilde{\mu}=\mu|_{G_B\cup (3B)^c}$ and let us see that $\tilde\mu$ verifies the hypotheses of Theorem \ref{teo:tolsa1}. First of all, notice that since $B$ is $(C_1,P_\mu)$-doubling, 
\begin{align*}
\mu(B)& = \mu(B\cap G_B)+\mu(B\setminus G_B)\leq \tilde\mu(B)+\mu(3B\setminus G_B)\\
&\leq \tilde\mu(B)+\delta\mu(3B)\leq \tilde\mu(B)+4^{n+1}C_1\delta\mu(B).
\end{align*}   
Thus, $(1-4^{n+1}C_1\delta)\,\Theta_\mu^n(B)\leq\Theta^n_{\tilde\mu}(B)$ and so,
for $\delta<2^{-1}4^{-n-1}C_1^{-1}$, 
    \begin{equation*}
        P_{\tilde\mu}(B)\leq P_\mu(B)\leq C_1\Theta_\mu^n(B)\leq\dfrac{C_1}{1-4^{n+1}C_1\delta}\Theta_{\tilde\mu}^n(B)\leq 2C_1 \Theta_{\tilde\mu}^n(B),
    \end{equation*}
  So $B$ is $(2C_1,P_{\tilde{\mu}})$-doubling. 
  
  Let us check now that the hypothesis $b)$ also holds with the same ball $B_1$ but changing $\alpha$ by $\alpha/2$:
    \begin{flalign*}
        \alpha\Theta^n_{\tilde\mu}(B)& \leq \alpha\Theta_\mu^n(B)\leq \Theta^n_\mu(B_1)\\
        &\leq \Theta_{\tilde\mu}^n(B_1)+\dfrac{4^{n+1}C_1\delta}{r(B_1)^n}\mu(B)\leq  \Theta_{\tilde\mu}^n(B_1)+\dfrac{4^{n+1}C_1\delta}{\delta_0^n}\Theta^n_{\mu}(B).
    \end{flalign*}
    From this we conclude that if $\delta<2^{-1}\alpha4^{-n-1}C_1^{-1}\delta_0^n$, then
    \begin{equation*}
       \dfrac{\alpha}{2}\,\Theta^n_{\tilde\mu}(B)\leq \dfrac{\alpha}{2}\,\Theta^n_{\mu}(B)\leq\Theta_{\tilde\mu}^n(B_1).
    \end{equation*}

    The proof of $a)$ and $c)$ follows rather closely the ideas from \cite[Th.~3.3]{AMT}. However, we include here the full argument for completeness. 
 Define $\sigma=\mu|_{3B}$, $a,b>0$ constants to be chosen and set
    \begin{equation*}
        M_a=\{ x\in\mathbb{R}^{n+1}:\mathcal{M}_n\sigma(x)>a\Theta^n_\sigma(B) \}~\text{ and }~R_b=\{ x\in\mathbb{R}^{n+1}:\mathcal{R}_\ast\sigma(x)>b\Theta^n_\sigma(B) \}.
    \end{equation*}
    For $x\in M_a$, denote
    \begin{equation*}
        \rho_M(x)=\sup\{r>0:\sigma(B(x,r))>a\Theta^n_\sigma(B)r^n\}
    \end{equation*}
    and for $x\in R_b$
      \begin{equation*}
        \rho_R(x)=\sup\{r>0:|\mathcal{R}_r\sigma(x)|>b\Theta^n_\sigma(B)\}.
    \end{equation*}
    It is immediate to check that both $\rho_M(x)$ and $\rho_R(x)$ are finite, using that $\sigma$ is a finite and compactly supported measure.
    Now, define
    \begin{equation*}
        H_M=\bigcup_{x\in M_a}B(x,\rho_M(x))~~\text{ and }~~H_R=\bigcup_{x\in R_b}B(x,\rho_R(x)).
    \end{equation*}
    Both sets are open. We claim that $2B\cap (H_R\cup H_M)\subset 2B\setminus G_B$ if $a,b>0$ are large enough. Indeed, if $y\in 2B\cap H_M$, then there exists $x\in M_a$ such that $|x-y|<\rho_M(x)$, so
    \begin{equation}\label{eq:key_lemma_inf_bound_1}
        \sigma(B(y,2\rho_M(x)))\geq\sigma(B(x,\rho_M(x)))\geq a\Theta^n_\sigma(B)\rho_M(x)^n.
    \end{equation}
    On the other hand, if $y\in G_B~\cap ~2B$ and $2\rho_M(x)\leq8 r(B)$, then
    \begin{equation*}
        \sigma(B(y,2\rho_M(x)))\leq\mu(B(y,2\rho_M(x)))\leq C_2(2\rho_M(x))^n\Theta^n_\mu(B)=C_2(2\rho_M(x))^n\Theta^n_\sigma(B).
    \end{equation*}
    Taking $a>\max\{2^nC_2,\,4^{n+1}C_1\}$ contradicts \eqref{eq:key_lemma_inf_bound_1}. 
    
    It remains to verify that $2\rho_M(x)\leq8 r(B)$.
    If $x\in M_a$, then 
    \begin{equation*}
\mu(3B)\geq\sigma(B(x,r))>a\Theta^n_\sigma(B)r^n>C_14^{n+1}\Theta^n_\mu(B)r^n\geq\dfrac{\mu(3B)}{r(B)^n}r^n, 
    \end{equation*}
    hence $r<r(B)$, i.e., $2\rho_M(x)\leq 2r(B)\leq8 r(B)$. We conclude that $2B\cap H_M\subset 2B\setminus G_B$.

    Suppose now that $y\in 2B\cap (H_R\setminus H_M)$, so there exists $x\in R_b$ such that $y\in B(x,\rho_R(x))$. Let us show that
    \begin{equation}\label{eq:acotacions_riesz_punts_prox}
        |\mathcal{R}_{\rho_R(x)}\sigma(x)-\mathcal{R}_{\rho_R(x)}\sigma(y)|\leq C_na\Theta_\sigma^n(B).
    \end{equation}
    Notice that
    \begin{flalign*}
        &|\mathcal{R}_{\rho_R(x)}\sigma(x)-\mathcal{R}_{\rho_R(x)}\sigma(y)|\leq\\
        &\leq |\mathcal{R}_{\rho_R(x)}(\chi_{B(y,2\rho_R(x))}\sigma)(x)|+|\mathcal{R}_{\rho_R(x)}(\chi_{B(y,2\rho_R(x))}\sigma)(y)|+\\
        &+|\mathcal{R}_{\rho_R(x)}(\chi_{B(y,2\rho_R(x))^c}\sigma)(x)-\mathcal{R}_{\rho_R(x)}(\chi_{B(y,2\rho_R(x))^c}\sigma)(y)|:=I_1+I_2+I_3.
    \end{flalign*}
    Since $y\not\in H_M$, we have $\Theta^n_\sigma(B(y,r))\leq a\Theta^n_\sigma(B)$ for all $r>0$, and so we get
    \begin{equation*}
        I_1\leq\int_{|x-z|>\rho_R(x)}\dfrac{\chi_{B(y,2\rho_R(x))}(z)}{|x-z|^n}\,d\sigma(z)\leq2^n\dfrac{\sigma(B(y,2\rho_R(x)))}{2^n\rho_R(x)^n}\leq 2^na\Theta^n_\sigma(B),
    \end{equation*}
    and
    \begin{equation*}
        I_2\leq\int_{|y-z|>\rho_R(x)}\dfrac{\chi_{B(y,2\rho_R(x))}(z)}{|y-z|^n}\,d\sigma(z)\leq2^n\dfrac{\sigma(B(y,2\rho_R(x)))}{2^n\rho_R(x)^n}\leq 2^na\Theta^n_\sigma(B).
    \end{equation*}
    For the last one, using the H\"older continuity estimate for Calder\'on-–Zygmund kernels, we obtain
    \begin{flalign*}
        I_3&=\left| \int_{|z-y|\geq2\rho_R(x)}\dfrac{x-z}{|x-z|^{n+1}}\,d\sigma(z)-\int_{|z-y|\geq2\rho_R(x)}\dfrac{y-z}{|y-z|^{n+1}}\,d\sigma(z) \right|\\
        &\leq  \tilde{C}_n\int_{|z-y|\geq2\rho_R(x)}\dfrac{|x-y|}{|y-z|^{n+1}}\,d\sigma(z)\leq \tilde{C}_n\sum_{j\geq1}\dfrac{\rho_R(x)}{(2^j\rho_R(x))^{n+1}}\sigma(B(y,2^{j+1}\rho_R(x)))\\
         &\leq \tilde{C}_n\sum_{j\geq1}\dfrac{2^n}{2^j}a\Theta^n_\sigma(B)=\tilde{C}_n2^na\Theta^n_\sigma(B),
    \end{flalign*}
    so from \eqref{eq:acotacions_riesz_punts_prox}, we get
    \begin{equation*}
        |\mathcal{R}_{\rho_R(x)}\sigma(y)|\geq|\mathcal{R}_{\rho_R(x)}\sigma(x)|-C_n a\Theta^n_\sigma(B)\geq b\Theta^n_\sigma(B)-C_na\Theta^n_\sigma(B).
    \end{equation*}
    If $b-C_na>C_2$, we conclude that $|\mathcal{R}_{\rho_R(x)}\sigma(y)|>C_2\Theta_\sigma^n(B)$. This implies that $2B\cap H_R\setminus H_M\subset2B\setminus G_B$.

    Taking this $a,b>0$ large enough so that $2B\cap(H_R\cup H_M)\subset2B\setminus G_B$, we consider the open set $H=H_R\cup H_M$ and we define  the $1$-Lipschitz function
    \begin{equation*}
        \Phi(x)=\operatorname{dist}(x,H^c)\geq\max\{\rho_R(x)\,,\rho_M(x)\},
    \end{equation*}
    and the associated ``suppressed kernel"
    \begin{equation*}
        K_\Phi(x,y)=\dfrac{x-y}{(|x-y|^2+\Phi(x)\Phi(y))^{\frac{n+1}{2}}}.
    \end{equation*}
    Consider the operator
    \begin{equation*}
        \mathcal{R}_{\Phi,\sigma}f(x):=\int K_\Phi(x,y)f(y)\,d\sigma(y),
    \end{equation*}
    and for every $\varepsilon>0$, its truncated version
     \begin{equation*}
        \mathcal{R}_{\Phi,\varepsilon,\sigma}f(x):=\int_{|x-y|>\varepsilon} K_\Phi(x,y)f(y)\,d\sigma(y).
    \end{equation*}
    Also set
    \begin{equation*}
        \mathcal{R}_{\Phi,\ast,\sigma}f(x)=\sup_{\varepsilon>0}\{|\mathcal{R}_{\Phi,\varepsilon,\sigma}f(x)|\}.
    \end{equation*}
    We say that $\mathcal{R}_{\Phi,\sigma}$ is bounded in $L^2(\sigma)$ if the operators $\mathcal{R}_{\Phi,\varepsilon,\sigma}$ are bounded in $L^2(\sigma)$ uniformly in $\varepsilon>0$.
    We now prove that
    \begin{equation}\label{eq:acotacion_riesz_supressed}
        \mathcal{R}_{\Phi,\ast,\sigma}1(x)\leq C(a,b)\Theta^n_\sigma(B),
    \end{equation}
    for all $x\in\mathbb{R}^{n+1}$. To do so, we need the following lemma, the proof of which can be found in \cite[L.~5.5]{CZ-T}
    \begin{lema}\label{lema:acotacion_riesz_supressed}
        Let $x\in\mathbb{R}^{n+1}$ and $r_0\geq0$ be such that $\sigma(B(x,r))\leq A_1r^n$ for $r\geq r_0$ and $|\mathcal{R}_\varepsilon\sigma(x)|\leq A_2$ for $\varepsilon\geq r_0$. If $\Phi(x)\geq r_0$, then there exists $C>0$, so that $|\mathcal{R}_{\Phi,\varepsilon,\sigma}1(x)|\leq CA_1+A_2$ for all $\varepsilon>0$.
    \end{lema}
    Now observe that taking $A_1=a\Theta^n_\sigma(B)$, $A_2=b\Theta_\sigma^n(B)$ and $r_0=\max\{ \rho_R(x),\rho_M(x) \}$ in Lemma \ref{lema:acotacion_riesz_supressed}, we obtain \eqref{eq:acotacion_riesz_supressed}. We further apply the $Tb$ theorem for suppressed operators proved in \cite{NTV} (see also \cite[Cor.~5.33]{CZ-T}) and it follows that $\mathcal{R}_{\Phi,\sigma}:L^2(\sigma)\rightarrow L^2(\sigma)$ is bounded with norm
    \begin{equation*}
        \norm{\mathcal{R}_{\Phi,\sigma}}_{L^2(\sigma)\to L^2(\sigma)}\lesssim\Theta_\sigma^n(B)\leq2\Theta_{\tilde{\mu}}^n(B).
    \end{equation*}
    Since $\Phi(x)=0$ for all $x\in 2B\cap G_B\subset H^c$, we have $\mathcal{R}_{\tilde{\mu}|_{2B}}:L^2(\tilde{\mu}|_{2B})\rightarrow L^2(\tilde{\mu}|_{2B})$
    with $\norm{\mathcal{R}_{\tilde{\mu}|_{2B}}}_{L^2(\tilde{\mu}|_{2B})\to L^2(\tilde{\mu}|_{2B})}\lesssim\Theta_{\tilde{\mu}}^n(B)$.

    For the density estimate in the hypothesis $a)$, take $x\in 2B$ and $0<r\leq 4r(B)$. If $x\in\overline{G_B}$, then take $y_r\in B(x,r)\cap G_B$ and observe that
    \begin{equation*}
        \Theta_{\tilde{\mu}}^n(B(x,r))\leq2^n\Theta^n_\mu(B(y_r,2r))\leq 2^nC_2\Theta_\mu^n(B)\leq2^{n+1}C_2\Theta^n_{\tilde{\mu}}(B). 
    \end{equation*}
    In case that $x\not\in \overline{G_B}$, since $d(x,\partial3B)\geq r(B)$, either $B(x,r)\cap G_B\neq\emptyset$ for $0<r<r(B)$, so we argue as before, or $\tilde{\mu}(B(x,r))=0$ for all $0<r<r(B)$, and then in this case
    \begin{equation*}
    \begin{aligned}
        \sup_{0<r\leq4r(B)}\{\Theta_{\tilde{\mu}}^n(B(x,r))\}&=\sup_{r(B)\leq r\leq4r(B)}\{\Theta_{\tilde{\mu}}^n(B(x,r))\}
        \\
        &\leq\dfrac{\mu(6B)}{r(B)^n}\leq8^{n+1}C_1\Theta^n_\mu(B)\leq 8^{n+1}2C_1\Theta^n_{\tilde{\mu}}(B).
    \end{aligned}
    \end{equation*}

    Condition $c)$ remains, taking into account that
    \begin{flalign*}
        \int_{2B}|\mathcal{R}\tilde{\mu}(x)-m_{\tilde{\mu},2B}(\mathcal{R}\tilde{\mu})|^2\,d\tilde{\mu}(x)&\lesssim\int_{2B\cap G_B}|\mathcal{R}{\mu}(x)-m_{{\mu},2B\cap G_B}(\mathcal{R}{\mu})|^2\,d{\mu}(x)\\
        &\quad+\int_{2B\cap G_B}|\mathcal{R}(\mu-\tilde{\mu})(x)|^2\,d\mu(x)=J_1+J_2.
    \end{flalign*}
    By hypothesis, we already know $J_1\leq\varepsilon\Theta^n_\mu(B)^2\mu(B)\leq 2^3\varepsilon\Theta^n_{\tilde{\mu}}(B)^2\tilde\mu(B)$.

    For $J_2$, notice that $\mu-\tilde\mu=\mu|_{3B\setminus G_B}$ and, furthermore, recall that $\Phi$ vanishes on $2B\cap G_B\subset H^c$, so $\mathcal{R}(\mu-\tilde\mu)(x)=\mathcal{R}_\Phi(\mu-\tilde\mu)(x)$ for all $x\in 2B\cap G_B$. Using now the boundedness of $\mathcal{R}_{\Phi,\sigma}$ from $L^1(\sigma)$ to $L^{1,\infty}(\sigma)$ (see \cite[L.~5.27]{CZ-T}) and duality, we can conclude that $\mathcal{R}_{\Phi,\sigma}$ is bounded in $L^4(\sigma)$, so we have that
    \begin{flalign*}
        \int_{2B\cap G_B}|\mathcal{R}(\mu-\tilde\mu)|^2\,d\mu&\leq\mu(G_B)^{1/2}\|\mathcal{R}_{\Phi}(\mu|_{3B\setminus G_B})\|^2_{L^4(\mu|_{2B\cap G_B})}\\
        &\lesssim \Theta^n_\sigma(B)^2\mu(G_B)^{1/2}\mu(3B\setminus G_B)^{1/2}\lesssim\delta^{1/2}\Theta^n_{\tilde\mu}(B)^2\tilde\mu(B).
    \end{flalign*}
    Gathering the estimates for $J_1$ and $J_2$ we obtain
    \begin{equation*}
        \int_{2B}|\mathcal{R}\tilde{\mu}(x)-m_{\tilde{\mu},2B}(\mathcal{R}\tilde{\mu})|^2\,d\tilde{\mu}(x)\lesssim(\varepsilon+\delta^{1/2})\Theta^n_{\tilde\mu}(B)^2\tilde\mu(B).
    \end{equation*}
\end{proof}

\section{Reduction to Wiener regular open sets and $\mathbb{R}^{n+1}$ for $n\geq2$}\label{sec4}

In this section, we will show that the planar case of Theorem \ref{teo:main_result} is trivial and moreover, that if this is true for Wiener regular open sets $\Omega$ with compact boundary, then it also holds for general open sets with compact boundary. 

In the case $n=1$, we recall Wolff's result in \cite{W2} which states that given an open set $\Omega\subset\mathbb{R}^2$, and $p\in\Omega$, there exists a set $F\subset\partial\Omega$ and a family $\{F_i\}_{i\in\mathbb{N}}$ such that $\omega^p_\Omega(F)=1$,  $F=\bigcup_{i\in\mathbb{N}}F_i$ and $\mathcal{H}^1(F_i)<\infty$. If
\begin{equation*}
    Z=\left\{x\in\mathbb{R}^{2}~:~\lim_{r\to0}\dfrac{\omega_\Omega^p(B(x,r))}{r}=0\right\},
\end{equation*}
then  $\omega^p_\Omega(Z)=0$ by the Besicovitch density theorem. Indeed, 
it is clear that
\begin{equation*}
    \Theta^{\ast,1}_{\omega^p_\Omega}(x)=\limsup_{r\to0}\dfrac{\omega^p_\Omega(B(x,r))}{r}=0\quad\text{ for all }~x\in Z,
\end{equation*}
and so  by \cite[Th.~6.9]{MAT}, for all $\varepsilon>0$ and all $i\in\mathbb{N}$
\begin{equation*}
    \omega^p_\Omega(Z\cap F_i)\leq 2\varepsilon \mathcal{H}^1(Z\cap F_i),
\end{equation*}
so we conclude that $\omega^p_\Omega(Z)=0$. This proves Theorem \ref{teo:main_result} in the case $n=1$.

Suppose that $n\geq2$, $\Omega\subset\mathbb{R}^{n+1}$ is an open set with compact boundary, $p\in\Omega$. Recall that
\begin{equation*}
    Z=\left\{x\in\mathbb{R}^{n+1}~:~\lim_{r\to0}\dfrac{\omega_\Omega^p(B(x,r))}{r^n}=0\right\}.
    \end{equation*}
Moreover, define
\begin{equation*}
    \mathcal{O}=\left\{x\in Z~:~\limsup_{R\to0}\sup_{0<r<R}\dfrac{\Theta^n_{\omega_\Omega^p}(x,r)}{\Theta^n_{\omega_\Omega^p}(x,R)}<\infty\right\}.
\end{equation*}
Assume Theorem \ref{teo:main_result} to be valid under the additional assumption of Wiener regularity and suppose that $\omega^p_\Omega(\mathcal{O})>0$. For every $\varepsilon>0$ we can consider the modified domain $\Omega_\varepsilon$ defined in Lemma \ref{lema:modificacion_dominios_no_WR}. Then, by Lemma \ref{lema:aproximacion_medida_con_dominios_WR_modificados} and (vi) in Lemma \ref{lema:modificacion_dominios_no_WR}, we can take $\varepsilon>0$ sufficiently small such that 
\begin{equation}\label{eq:reduction_wr}
    \omega^p_{\Omega_\varepsilon}\left(\partial\Omega\setminus\bigcup_{i\in I_\varepsilon}2\overline{B_i}\right)\geq \omega^p_{\Omega_\varepsilon}\left(\mathcal{O}\setminus\bigcup_{i\in I_\varepsilon}2\overline{B_i}\right)\geq \frac12\, \omega^p_{\Omega_\varepsilon}(\mathcal{O})>0.
\end{equation}
By Lemma \ref{lema:Markov_property_harmonic_measure}, the set $A=\partial\Omega\setminus\bigcup_{i\in I_\varepsilon}2\overline{B_i}$ satisfies $\omega_{\Omega_\varepsilon}^p|_A\ll\omega^p_\Omega|_A$. It is clear that $\omega^p_{\Omega_\varepsilon}$-a.e. $x\in A$ is a Lebesgue point of $A$ for $\omega^p_\Omega$ and $\omega^p_{\Omega_\varepsilon}$ and moreover, a Lebesgue point of $h_\varepsilon:=\frac{d\omega^p_{\Omega_\varepsilon}}{d\omega^p_\Omega}$ for $\omega^p_\Omega|_A$. Such point satisfies
\begin{flalign*}
    &\lim_{r\to0}\dfrac{\omega^p_{\Omega}(B(x,r)\cap A)}{\omega^p_{\Omega}(B(x,r))}= \lim_{r\to0}\dfrac{\omega^p_{\Omega_\varepsilon}(B(x,r)\cap A)}{\omega^p_{\Omega_\varepsilon}(B(x,r))} = 1,
\end{flalign*}
so in particular,
\begin{align*}
    \lim_{r\to0}\dfrac{\Theta^n_{\omega^p_\Omega}(x,r)}{\Theta^n_{\omega^p_{\Omega_\varepsilon}}(x,r)}   & = \lim_{r\to0} \dfrac{\omega^p_\Omega(B(x,r))}{\omega^p_{\Omega_\varepsilon}(B(x,r))}\\
    &=
    \lim_{r\to0} \dfrac{\omega^p_\Omega(B(x,r)\cap A)}{\omega^p_{\Omega_\varepsilon}(B(x,r)\cap A)}
=\frac1{h_\varepsilon(x)}\in(0,+\infty) \,\quad\omega^p_{\Omega_\varepsilon}\text{-a.e.}~~x\in A.
\end{align*}
Since the ratio of the two density ratios converges to a finite positive constant, the vanishing of the density and the above almost-monotonicity property are preserved when passing from $\omega^p_\Omega$ to $\omega^p_{\Omega_\varepsilon}$.

 Since $\mathcal{O}\setminus\bigcup_{i\in I_\varepsilon}2\overline{B_i}\subset A$, we get
\begin{equation*}
    \limsup_{R\to0}\sup_{0<r<R}\frac{\Theta^n_{\omega_{\Omega_\varepsilon}^p}(x,r)}{\Theta^n_{\omega_{\Omega_\varepsilon}^p}(x,R)}<\infty\quad\omega^p_{\Omega_\varepsilon}\text{-a.e.}~~x\in \mathcal{O}\setminus\bigcup_{i\in I_\varepsilon}2\overline{B_i}.
\end{equation*}
By assumption, since $\Omega_\varepsilon$ is Wiener regular, we conclude that
\begin{equation*}
    \omega^p_{\Omega_\varepsilon}\left(\mathcal{O}\setminus\bigcup_{i\in I_\varepsilon}2\overline{B_i}\right)=0.
\end{equation*}
This contradicts  \eqref{eq:reduction_wr}.

\section{Proof of Theorem \ref{teo:main_result}}

We may assume that $n\geq 2$ and that $\Omega$ is a Wiener regular set with compact boundary. Moreover, for a fixed $p\in\Omega$, we have $\omega^p_\Omega=\omega^p_{\mathbb{R}^{n+1}\setminus\partial\Omega}$, so replacing $\Omega$ by $\mathbb{R}^{n+1}\setminus \partial \Omega$ we may suppose that $\Omega=\mathbb{R}^{n+1}\setminus\partial\Omega$ without loss of generality. In this situation, we can ensure that the component of $\Omega$ that contains $p$ is Wiener regular.

Again, we consider the sets
\begin{equation*}
    Z=\left\{x\in\mathbb{R}^{n+1}:\lim_{r\to0}\dfrac{\omega_\Omega^p(B(x,r))}{r^n}=0\right\}
\end{equation*}
and
\begin{equation*}
    \mathcal{O}=\left\{x\in Z:\limsup_{R\to0}\sup_{0<r<R}\dfrac{\Theta^n_{\omega_\Omega^p}(x,r)}{\Theta^n_{\omega_\Omega^p}(x,R)}<\infty\right\}.
\end{equation*}
We argue by contradiction. Assume that $\omega_\Omega^p(\mathcal{O})>0$. Then, there exist $m\in\mathbb{N}$ and $\beta>0$ such that the set
\begin{equation*}
    E_{m,\beta}:=\left\{x\in Z:\sup_{0<r<R}\dfrac{\Theta^n_{\omega^p_\Omega}(x,r)}{\Theta^n_{\omega^p_\Omega}(x,R)}\leq m \quad\text{for all } R\leq\beta\right\}
\end{equation*}
satisfies $\omega^p_\Omega(E_{m,\beta})>0$.

To reach a contradiction, we will apply Theorem \ref{teo:main1} to a suitable small ball $B$ centered at a density point of $E_{m,\beta}$. That theorem yields the existence of a rectifiable set $\Gamma$ such that $\mu(\Gamma\cap 2B)\geq\tau \mu(B)$ for some $\tau>0$, which will contradict the fact that points in $\mathcal{O}$ have vanishing density for $\omega^p_\Omega$. In order to apply Theorem \ref{teo:main1}, we will show that hypotheses a) and c) hold by following ideas similar to those in \cite{AMT, AMTV}. The main difficulty lies in proving the existence of a ball $B_1$ fulfilling the assumptions of b), that is, $B_1$ has to be sufficiently small relative to $B$ and must have a density ratio bounded from below.\footnote{We remark that if $\Omega$ satisfied the CDC, then it would be much easier to check all the required assumptions.}

We consider the David--Mattila lattice $\mathcal{D}_{\omega^p_\Omega}$ introduced in Section \ref{sec2.3}. As mentioned in that section, we take some large constants $C_0$ and $A_0=C_0^{c_{n+1}}$. We choose
\begin{equation*}
\kappa = 8(n+1),
\end{equation*}
and we take $C_0$ so large that  
\eqref{eq:C_0^-l_0_decrece_mas_que_A_0} and Lemma \ref{lema:acotacion_cadena_cubos_malos} hold with 
 $\kappa=8(n+1)$. We will not keep track of the constants $A_0$ and $C_0$ explicitly if it is not necessary.

Now, let $\xi_0\in \partial\Omega$ be a Lebesgue density point of $E_{m,\beta}$ with respect to $\omega^p_\Omega$. By Lemma \ref{lema:bolas_P_doblantes_medida_armonica}, we can assume that there exists a sequence of $(a, P_{\gamma,\omega^p_\Omega})$-doubling balls $B(\xi_0,r_k)$ with radii $r_k\to 0$, for some constants $0<\gamma<1$ and $a>0$. 
On the other hand, for any $\delta\in(0,1/2)$ arbitrarily small, we can find $k_0=k_0(\delta,\beta,p)\in\mathbb{N}$ such that
\begin{enumerate}[label=\roman*)]
    \item $p\notin B(\xi_0,570C_0^2A_0r_{k_0})$;\label{enum:pol_p}
    \item $570C_0^2A_0r_{k_0}<\beta$;\label{enum:beta}
    \item $\omega_\Omega^p(B(\xi_0,r)\setminus E_{m,\beta})<\delta 2^{-11(n+1)}a^{-1}(C_0A_0)^{-(n+1)}\omega_\Omega^p(B(\xi_0,r))$ whenever $r\leq 570C_0^2A_0r_{k_0}$.\label{enum:delta}
\end{enumerate}
Let $j_0\in\mathbb{N}$ be such that $A_0^{-j_0-1}\leq r_{k_0}<A_0^{-j_0}$ and take $R\in \mathcal{D}_{\omega^p_\Omega,j_0}$ as in Lemma \ref{lema:bola_asociada_DM_amb_bones_propietats}, so that $\xi_0\in R\in\mathcal{D}^{db}_{\omega^p_\Omega}$ and $B(R)$ is a $(C,P_{\omega^p_\Omega,\gamma})$-doubling ball for some $C=C(n,\gamma,a,C_0)$. It is easy to check that 
\begin{equation*}
    B(\xi_0,r_{k_0})\subset 800B(R)\subset B(\xi_0,828C_0A_0r_{k_0}).
\end{equation*}
Therefore, using these inclusions, the fact that $B(\xi_0,r_{k_0})$ is $(a,P_{\omega^p_\Omega,\gamma})$-doubling, \eqref{eq:P-doubling_implica_doubling}, and $828C_0A_0\leq 570C_0^2A_0$ in \ref{enum:delta}, we obtain
\begin{flalign}\label{eq:800B(R)_tiene_masa_en_Emb}
    \omega^p_\Omega(800B(R)\setminus E_{m,\beta})&\leq \omega^p_\Omega(B(\xi_0,828C_0A_0r_{k_0})\setminus E_{m,\beta})\notag\\
    &<\delta 2^{-11(n+1)}a^{-1}(C_0A_0)^{-(n+1)}\omega^p_\Omega(B(\xi_0,828C_0A_0r_{k_0}))\notag\\
    &\leq\delta 2^{-11(n+1)}a^{-1}(C_0A_0)^{-(n+1)}\omega^p_\Omega(B(\xi_0,2^{10+\lceil\log_2(C_0A_0)\rceil}r_{k_0}))\notag\\
    &\leq \delta\omega^p_\Omega(B(\xi_0,r_{k_0}))\leq\delta \omega^p_\Omega(800B(R));
\end{flalign}
thus, most of the harmonic measure of $800B(R)$ is concentrated on $E_{m,\beta}$ in terms of $\delta$. 

For any cube $Q\in\mathcal{D}_{\omega^p_\Omega}$, let $B_Q:=29B(Q)$. From the scaling properties of the David--Mattila lattice, one can check that if $Q\subset P$, then $B_Q\subset B_P$. Now, we define the family of neighbor cubes of $R$ and the enlarged region $R'$ as
\begin{equation*}
    \mathcal{N}(R)=\{Q\in\mathcal{D}_{\omega^p_\Omega,j_0}:Q\cap 3B_R\neq\emptyset\},\quad R'=\bigcup_{\tilde{R}\in\mathcal{N}(R)}\tilde{R},
\end{equation*}
and
\begin{equation*}
    \mathcal{D}_{\omega^p_\Omega}(R')=\bigcup_{\tilde{R}\in\mathcal{N}(R)}\mathcal{D}_{\omega^p_\Omega}(\tilde{R}).
\end{equation*}
For all $\tilde{R}\in\mathcal{N}(R)$, let
\begin{equation*}
    \mathcal{B}_{\tilde{R}}=\{Q\in\mathcal{D}_{\omega^p_\Omega}(\tilde{R}):\omega^p_\Omega(Q\setminus E_{m,\beta})\geq\tfrac{1}{2C_0}\omega^p_\Omega(Q)\}.
\end{equation*}
We consider $B_R$ as our main ball for Theorem \ref{teo:main1} and set
\begin{equation*}
    G_B=G_{B_R}=(3B_R\cap E_{m,\beta})\setminus\bigcup_{\tilde{R}\in\mathcal{N}(R)}\bigcup_{Q\in\mathcal{B}_{\tilde{R}}}Q.
\end{equation*}

\begin{lema}\label{lema:G_B_mucha_masa_dentro_de_3B}
    There exists $\delta'=\delta'(n,C_0,\delta)\in(0,1)$ such that
    \begin{equation*}
        \omega^p_\Omega(3B_R\setminus G_B)<\delta'\omega^p_\Omega(3B_R),
    \end{equation*}
    assuming $\delta$ is small enough.
\end{lema}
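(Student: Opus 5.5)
We split $3B_R\setminus G_B$ into the part lying outside $E_{m,\beta}$ and the part covered by bad cubes:
\begin{equation*}
3B_R\setminus G_B\subset (3B_R\setminus E_{m,\beta})\cup\bigcup_{\tilde R\in\mathcal N(R)}\bigcup_{Q\in\mathcal B_{\tilde R}}Q .
\end{equation*}
Each piece will be compared with $\omega^p_\Omega(800B(R))$, which is controlled by \eqref{eq:800B(R)_tiene_masa_en_Emb}. At the end we convert back to $\omega^p_\Omega(3B_R)$ using that $R\in\mathcal D^{db}_{\omega^p_\Omega}$.

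\textbf{Containments.} Since $B_R=29B(R)$, we have $3B_R=87B(R)$. Every $\tilde R\in\mathcal N(R)$ meets $87B(R)$ and satisfies $\tilde R\subset 28B(\tilde R)$. Moreover $r(\tilde R)\le C_0A_0^{-j_0}$ and $r(R)\ge A_0^{-j_0}$. Hence $\tilde R\subset B(x_R,(87+56C_0)r(R))$. This ball is not contained in $800B(R)$, so I will instead use the larger ball $B(\xi_0,570C_0^2A_0r_{k_0})$, for which hypothesis \ref{enum:delta} was designed. This ball contains $R'$, since $r(R)\le C_0A_0^{-j_0}\le C_0A_0 r_{k_0}$ and $|x_R-\xi_0|<28r(R)$.

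Hypothesis \ref{enum:delta} gives
\begin{equation*}
\omega^p_\Omega(R'\setminus E_{m,\beta})\le \delta\,2^{-11(n+1)}a^{-1}(C_0A_0)^{-(n+1)}\,\omega^p_\Omega\bigl(B(\xi_0,570C_0^2A_0r_{k_0})\bigr).
\end{equation*}
We then apply \eqref{eq:P-doubling_implica_doubling} with $2^{m}\approx 570C_0^2A_0$. This gives a factor $(C_0^2A_0)^{n+1}$ times $a$ relative to $\omega^p_\Omega(B(\xi_0,r_{k_0}))\le\omega^p_\Omega(800B(R))$, and so
\begin{equation*}
\omega^p_\Omega(R'\setminus E_{m,\beta})\le C(n)\,C_0^{n+1}\,\delta\,\omega^p_\Omega(800B(R)).
\end{equation*}
The prefactor in \ref{enum:delta} absorbs all but a power of $C_0$. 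The same bound holds for $\omega^p_\Omega(3B_R\setminus E_{m,\beta})$, because $3B_R\cap\operatorname{supp}\omega^p_\Omega\subset R'$.

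\textbf{Bad cubes.} For each $\tilde R\in\mathcal N(R)$, let $\mathcal B^{\max}_{\tilde R}$ be the maximal cubes of $\mathcal B_{\tilde R}$. They are disjoint and their union equals $\bigcup_{Q\in\mathcal B_{\tilde R}}Q$. Each satisfies $\omega^p_\Omega(Q)\le 2C_0\,\omega^p_\Omega(Q\setminus E_{m,\beta})$. Summing over the maximal cubes and over $\tilde R$, which are disjoint, gives
\begin{equation*}
\omega^p_\Omega\Bigl(\bigcup_{\tilde R}\bigcup_{Q\in\mathcal B_{\tilde R}}Q\Bigr)\le 2C_0\,\omega^p_\Omega(R'\setminus E_{m,\beta})\le C(n)C_0^{n+2}\delta\,\omega^p_\Omega(800B(R)).
\end{equation*}

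\textbf{Conclusion.} Since $R\in\mathcal D^{db}_{\omega^p_\Omega}$, \eqref{eq:bolas_db_800_DM} gives $\omega^p_\Omega(800B(R))\le C_0\,\omega^p_\Omega(B(R))\le C_0\,\omega^p_\Omega(3B_R)$. Combining the two pieces,
\begin{equation*}
\omega^p_\Omega(3B_R\setminus G_B)\le C(n)C_0^{n+3}\delta\,\omega^p_\Omega(3B_R).
\end{equation*}
We take $\delta'=C(n)C_0^{n+3}\delta$, which lies in $(0,1)$ once $\delta$ is small depending on $n$ and $C_0$.

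\textbf{Main obstacle.} The only delicate step is the geometric bookkeeping: verifying that $R'$ lies inside the ball where \ref{enum:delta} applies, and that the resulting doubling constants are powers of $C_0A_0$ absorbed by the prefactor in \ref{enum:delta}. Any leftover power of $C_0$ is harmless, since it only enlarges $\delta'$ in terms of $C_0$.
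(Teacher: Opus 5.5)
Your proof is correct and follows essentially the same route as the paper. You use the same split of $3B_R\setminus G_B$, the same maximal-cube argument for $\bigcup_{\tilde R}\bigcup_{Q\in\mathcal B_{\tilde R}}Q$, the same control of $\omega^p_\Omega(R'\setminus E_{m,\beta})$ via hypothesis iii) on a ball around $\xi_0$ combined with the $P$-doubling of $B(\xi_0,r_{k_0})$, and the same use of $R\in\mathcal D^{db}_{\omega^p_\Omega}$ to return to $\omega^p_\Omega(3B_R)$. The differences are cosmetic: you bound $3B_R\setminus E_{m,\beta}$ through $R'$ rather than directly through $800B(R)$, and you obtain $C(n)C_0^{n+3}\delta$ instead of the paper's $(2C_0^{n+2}+C_0)\delta$.
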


\begin{proof}
    By definition, $3B_R\setminus G_B\subset (3B_R\setminus E_{m,\beta})\cup\bigcup_{\tilde{R}\in\mathcal{N}(R)}\bigcup_{Q\in\mathcal{B}_{\tilde{R}}}Q$. By \eqref{eq:800B(R)_tiene_masa_en_Emb},
    \begin{equation*}
        \omega^p_\Omega(800B(R)\setminus E_{m,\beta})<\delta\omega^p_\Omega(800B(R)).
    \end{equation*}
    Combining this with the fact that $R\in\mathcal{D}^{db}_{\omega^p_\Omega}$, it follows that
    \begin{equation*}
        \omega^p_\Omega(3B_R\setminus E_{m,\beta})\leq \delta \,\omega^p_\Omega(800B(R))\leq \delta C_0\omega^p_\Omega(3B_R).
    \end{equation*}
    On the other hand, since all the cubes in $\mathcal{N}(R)$ are at the same scale $j_0$, by the geometric properties of the David--Mattila lattice, one checks that 
    \begin{equation*}
        R'\subset 57C_0B(R)\subset B(\xi_0,85C_0^2A_0 r_{k_0}),
    \end{equation*}
    and moreover, arguing as above with $800B(R)\setminus E_{m,\beta}$, we deduce
    \begin{flalign}\label{eq:R'_tiene_casi_toda_la_masa_en_E_mb}
        \omega^p_\Omega(R'\setminus E_{m,\beta})&\leq\omega^p_\Omega(57C_0B(R)\setminus E_{m,\beta})\notag\\
        &\leq\omega^p_\Omega(B(\xi_0,85C_0^2A_0 r_{k_0})\setminus E_{m,\beta})\leq C_0^{n+1}\delta\omega^p_\Omega(3B_R).
    \end{flalign}
    For all $\tilde{R}\in\mathcal{N}(R)$ and all $Q\in\mathcal{B}_{\tilde{R}}$, let $\breve{Q}$ denote the maximal cube in $\mathcal{B}_{\tilde{R}}$ that contains $Q$. Then we get
    \begin{flalign*} 
        \omega^p_\Omega\left(\bigcup_{\tilde{R}\in\mathcal{N}(R)}\bigcup_{Q\in\mathcal{B}_{\tilde{R}}}Q\right)&=\omega^p_\Omega\left(\bigcup_{\tilde{R}\in\mathcal{N}(R)}\bigcup_{\breve{Q}\in\mathcal{B}_{\tilde{R}}}\breve{Q}\right)\\
        &=\sum_{\tilde{R}\in\mathcal{N}(R)}\sum_{\breve{Q}\in\mathcal{B}_{\tilde{R}}}\omega^p_\Omega(\breve{Q})
        \leq 2C_0\sum_{\tilde{R}\in\mathcal{N}(R)}\sum_{\breve{Q}\in\mathcal{B}_{\tilde{R}}}\omega^p_\Omega(\breve{Q}\setminus E_{m,\beta})\\
        &=2C_0\omega^p_\Omega\left(\bigcup_{\tilde{R}\in\mathcal{N}(R)}\bigcup_{\breve{Q}\in\mathcal{B}_{\tilde{R}}}\breve{Q}\setminus E_{m,\beta}\right)\leq 2C_0\omega^p_\Omega\left(R'\setminus E_{m,\beta}\right)\\
        &\overset{\eqref{eq:R'_tiene_casi_toda_la_masa_en_E_mb}}{\leq} 2C_0^{n+2}\delta\omega^p_\Omega(3B_R).
    \end{flalign*}
    Therefore, taking $\delta'=(2C_0^{n+2}+C_0)\delta$ concludes the proof.
\end{proof}

From now on, we assume that $\delta$ has been chosen sufficiently small so that $0<\delta'\ll 1$. Several of the arguments that follow were stated under the CDC assumption in \cite{AMT, HM}. The following lemma provides the key substitute for the capacity density condition in all these arguments.

\begin{lema}\label{lema:densidad_acotada_cdc_local}
    Let $\tilde{R}\in\mathcal{N}(R)$ and $Q\in \mathcal{D}_{\omega^p_\Omega}(\tilde{R})$ be such that $Q\in\mathcal{D}^{db}_{\omega^p_\Omega}\setminus \mathcal{B}_{\tilde{R}}$. Then $\Capp(\overline{B(Q)}\setminus\Omega)\gtrsim_{n,m,C_0} r(Q)^{n-1}$.
\end{lema}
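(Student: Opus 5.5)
The plan is to turn the good-cube hypothesis into a lower bound on capacity. The bridge will be the Green function: Lemma \ref{lema:formula_Green_medida_armonica} bounds $\omega^p_\Omega$ of a small ball by an average of $G^p_\Omega$, and Lemma \ref{lema:Cap_8B} bounds $G^p_\Omega$ by $\omega^p_\Omega$ of a larger ball divided by a capacity. Chaining the two gives, for a ball $B'=B(z,s)$ with $p\notin 4B'$,
\begin{equation*}
\omega^p_\Omega(\tfrac12 B')\lesssim s^{-2}\int_{B'}G^p_\Omega\,dy\lesssim s^{n-1}\,\frac{\omega^p_\Omega(8B')}{\Capp(2\overline{B'}\setminus\Omega)},
\end{equation*}
and rearranging,
\begin{equation}\label{eq:plan_cap}
\Capp(2\overline{B'}\setminus\Omega)\gtrsim s^{n-1}\,\frac{\omega^p_\Omega(\tfrac12B')}{\omega^p_\Omega(8B')}.
\end{equation}
For the first inequality I take $\varphi\in\mathcal C^\infty_c(B')$ with $\varphi=1$ on $\frac12B'$ and $|\Delta\varphi|\lesssim s^{-2}$. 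Since $p\notin B'$, we have $\varphi(p)=0$, and since $G^p_\Omega\ge 0$ the formula yields the bound. The second inequality is Lemma \ref{lema:Cap_8B}, applicable because $p$ is far from $B(\xi_0,570C_0^2A_0r_{k_0})$ by \ref{enum:pol_p}. If $2B'\subset\overline{B(Q)}$, then \eqref{eq:plan_cap} immediately gives a lower bound for $\Capp(\overline{B(Q)}\setminus\Omega)$. So everything reduces to finding a ball $B'$ of radius $s=\eta\, r(Q)$, with $2B'\subset \overline{B(Q)}$ and $\eta$ depending only on $n,m,C_0$, whose mass ratio $\omega^p_\Omega(\frac12B')/\omega^p_\Omega(8B')$ is bounded below.

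For the denominator I plan to use almost monotonicity at a point of $E_{m,\beta}$. Since $Q\notin\mathcal B_{\tilde R}$, we have $\omega^p_\Omega(Q\cap E_{m,\beta})\ge (1-\frac1{2C_0})\omega^p_\Omega(Q)$, so $E_{m,\beta}$ carries almost all of the mass of $Q$. For $z\in E_{m,\beta}\cap B(Q)$, all radii involved are below $\beta$ by \ref{enum:beta}, so
\begin{equation*}
\omega^p_\Omega(B(z,8\eta r(Q)))\le m(8\eta)^n\omega^p_\Omega(B(z,2r(Q)))\le m(8\eta)^n\omega^p_\Omega(800B(Q))\le m8^nC_0\eta^n\omega^p_\Omega(B(Q)),
\end{equation*}
where the last step uses $Q\in\mathcal D^{db}$.

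For the numerator I will use pigeonholing. Cover the relevant part of $Q\cap E_{m,\beta}$ by $\lesssim \eta^{-(n+1)}$ balls of radius $\eta r(Q)/2$ centred at points of $E_{m,\beta}$. One of them, say $\frac12B'$ with $B'=B(z,\eta r(Q))$, must satisfy $\omega^p_\Omega(\frac12B')\gtrsim\eta^{n+1}\omega^p_\Omega(Q\cap E_{m,\beta}\cap\cdots)$. Plugging both bounds into \eqref{eq:plan_cap} gives
\begin{equation*}
\Capp(\overline{B(Q)}\setminus\Omega)\gtrsim(\eta r(Q))^{n-1}\,\frac{\eta^{n+1}}{mC_0\eta^n}\approx_{n,m,C_0,\eta} r(Q)^{n-1},
\end{equation*}
which is the claim once $\eta$ is fixed.

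The main obstacle is the requirement $2B'\subset\overline{B(Q)}$. The pigeonholing must run over points $z$ with $|z-x_Q|\le(1-2\eta)r(Q)$, so I need a fixed fraction of $\omega^p_\Omega(B(Q))$ to sit inside $B(x_Q,(1-2\eta)r(Q))$ rather than in the thin shell next to $\partial B(Q)$. Doubling only controls mass outward, so it does not give this. My first attempt would be to run the whole argument not on $B(Q)$ but on a slightly smaller concentric ball $B(x_Q,\theta r(Q))$. I would choose $\theta\in[\frac12,1)$ so that the annulus $B(Q)\setminus B(x_Q,\theta r(Q))$ carries at most half of $\omega^p_\Omega(B(Q))$, or else shrink $\theta$ dyadically. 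In the latter case doubling again at scale $\theta r(Q)$ is supplied by almost monotonicity at an $E_{m,\beta}$ point together with $800B(Q)$-doubling, at the cost of constants depending on $m$. If this fails, the fallback is the small-boundary estimate \eqref{eq:small_boundary_DM} for the neighbouring cubes, applied after using $\frac{101}{100}B(Q)\cap F\subset Q$, to show that the mass of the shell is a small fraction of $\omega^p_\Omega(90B(Q))\le C_0\,\omega^p_\Omega(B(Q))$.
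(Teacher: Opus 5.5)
The gap is the step you single out as the main obstacle, and none of your proposed fixes closes it. In fact it cannot be closed by a one-ball Green function argument.

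\textbf{Why the one-ball argument fails.} The hypotheses ($\omega^p_\Omega(800B(Q))\le C_0\,\omega^p_\Omega(B(Q))$, $Q\notin\mathcal{B}_{\tilde R}$, almost monotonicity at points of $E_{m,\beta}$) do not prevent $\omega^p_\Omega|_{B(Q)}$ from being carried by a thin shell $\{(1-t)r(Q)<|x-x_Q|<r(Q)\}$ with $t$ as small as you like. You only know $x_Q\in\operatorname{supp}\omega^p_\Omega$, not that an inner ball carries a fixed fraction of the mass. In that configuration, applying Lemmas \ref{lema:formula_Green_medida_armonica} and \ref{lema:Cap_8B} to a ball $B'=B(z,s)$ that charges the shell and satisfies $2\overline{B'}\subset\overline{B(Q)}$ forces $s<t\,r(Q)$. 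So the lower bound the method produces is at most of order $(t\,r(Q))^{n-1}$. Since capacity is only subadditive, such bounds for many small balls along the shell cannot be summed.

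Your remedies do not change this:
\begin{itemize}
\item Passing to a concentric ball $B(x_Q,\theta r(Q))$ either leaves the mass at its edge again or discards it.
\item Almost monotonicity bounds the mass of a shell of width $\eta r(Q)$ only via about $\eta^{-n}$ balls, each of mass $\lesssim mC_0\eta^n\omega^p_\Omega(B(Q))$. The total is $\lesssim mC_0\,\omega^p_\Omega(B(Q))$, which is not small, because $n$ is exactly the critical dimension.
\item The estimate \eqref{eq:small_boundary_DM} controls points of $Q$ within $A_0^{-k-l}$ of $F\setminus Q$, where $F=\operatorname{supp}\omega^p_\Omega$. By \eqref{eq:cadena_inclsuion_cubos_bolas_DM}, every point of $F\cap B(Q)$ is at distance at least $r(Q)/100$ from $F\setminus Q$. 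So it says nothing about mass near the Euclidean sphere $\partial B(Q)$.
\end{itemize}

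\textbf{What your argument does prove, and how the paper gets the full statement.} Your argument proves $\Capp(\lambda\overline{B(Q)}\setminus\Omega)\gtrsim_{n,m,C_0,\lambda}r(Q)^{n-1}$ for any fixed $\lambda>1$. Take $\eta=(\lambda-1)/2$ and pigeonhole over all of $E_{m,\beta}\cap B(Q)$. This set carries at least $\frac12\omega^p_\Omega(B(Q))$, because $F\cap B(Q)\subset Q$ and $Q$ is doubling. For $1<\lambda<\frac{101}{100}$ this weaker form would in fact suffice for how the lemma is used later, but it is not the statement.

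The paper reaches $\lambda=1$ with exactly your two ingredients, but feeds them into Hausdorff content instead of a single Green function ball. Let $\mu=\omega^p_\Omega|_{B(Q)\cap E_{m,\beta}}$. Then $\mu(B(x,r))\le 2^n mC_0\,\Theta^n_{\omega^p_\Omega}(B(Q))\,r^n$ for all $x\in\mathbb{R}^{n+1}$ and $r>0$:
\begin{itemize}
\item for $r<r(Q)/2$, recentre at some $x_0\in B(x,r)\cap B(Q)\cap E_{m,\beta}$, and use almost monotonicity at $x_0$ up to scale $r(Q)$ together with the doubling of $Q$;
\item for $r\ge r(Q)/2$ the bound is trivial.
\end{itemize}
Moreover, $\operatorname{supp}\mu\subset\overline{B(Q)}\cap\partial\Omega=\overline{B(Q)}\setminus\Omega$. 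The mass distribution principle then gives $\mathcal{H}^n_\infty(\overline{B(Q)}\setminus\Omega)\gtrsim \mu(\mathbb{R}^{n+1})/(mC_0\,\Theta^n_{\omega^p_\Omega}(B(Q)))\gtrsim r(Q)^n/(mC_0)$. Lemma \ref{lema:relacion_contenido_Hausodrf_capacidad} with $s=n$ converts this into $\Capp(\overline{B(Q)}\setminus\Omega)\gtrsim r(Q)^{n-1}$.

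Hausdorff content registers the spread of the mass over many small balls, wherever they sit in $\overline{B(Q)}$. That is precisely what a one-ball estimate cannot see.
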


\begin{proof}
    First, note that if $Q\in \mathcal{D}_{\omega^p_\Omega}(\tilde{R})$, we deduce that $r(Q)\leq r(\tilde{R})\leq C_0 A_0^{-j_0}\leq\beta/800$ by \ref{enum:beta}. Now, if $Q\notin\mathcal{B}_{\tilde{R}}$ and $Q\in\mathcal{D}^{db}_{\omega^p_\Omega}$, then
    \begin{equation}\label{eq:Si_Q_tiene_medida_en_E_B(Q)_tambien}
        \omega^p_\Omega(B(Q)\setminus E_{m,\beta})\leq\omega^p_\Omega(Q\setminus E_{m,\beta})<\frac{1}{2C_0}\omega^p_\Omega(Q)\leq \frac{1}{2}\omega^p_\Omega(B(Q)).
    \end{equation}
    Consider the measure $\mu=\omega_\Omega^p|_{B(Q)\cap E_{m,\beta}}$.
  
    For 
    $x\in\mathbb{R}^{n+1}$ and  $0<r<\frac{r(Q)}{2}$ such that $B(x,r)\cap \operatorname{supp}\mu \neq \emptyset$, choose $x_0\in B(x,r)\cap B(Q)\cap E_{m,\beta}$. By the definition of $E_{m,\beta}$, we have    
       \begin{equation*}
        \Theta^n_{\mu}(B(x,r))\leq 2^n\Theta^n_{\mu}(x_0,2r)\leq 2^n m\,\Theta^n_{\omega^p_\Omega}(B(x_0,r(Q)))\leq 2^n m C_0\Theta^n_{\omega^p_\Omega}(B(Q)),
    \end{equation*}
    since $2r\leq \beta$.
    If instead $r\geq\frac{r(Q)}{2}$, then \begin{equation*}
    \Theta^n_{\mu}(B(x,r))\leq 2^n\Theta^n_{\omega^p_\Omega}(B(Q))\leq 2^n m C_0\Theta^n_{\omega^p_\Omega}(B(Q)).
    \end{equation*}
    Therefore, the measure $\sigma=(2^n m C_0\Theta^n_{\omega^p_\Omega}(B(Q)))^{-1}\mu$ satisfies $\mathcal{M}_n\sigma(x)\leq 1$ for all $x\in \mathbb{R}^{n+1}$. Moreover, $\operatorname{supp}\sigma\subset \overline{B(Q)\cap E_{m,\beta}}\subset \overline{B(Q)}\setminus\Omega$. Hence, Frostman's lemma (see \cite[Chapter 4]{HM}) and \eqref{eq:Si_Q_tiene_medida_en_E_B(Q)_tambien} yield
    \begin{flalign*}
        \mathcal{H}^n_\infty(\overline{B(Q)}\setminus\Omega)&\gtrsim_n \sigma(\overline{B(Q)}\setminus\Omega)=(2^n m C_0\Theta^n_{\omega^p_\Omega}(B(Q)))^{-1}\omega^p_\Omega (B(Q)\cap E_{m,\beta})\\
        &\geq r(Q)^n (2^n m C_0)^{-1}\dfrac{\frac{1}{2}\omega^p_\Omega(B(Q))}{\omega^p_\Omega(B(Q))}\gtrsim_{n,m,C_0} r(Q)^n.
    \end{flalign*}
    Combining this estimate with Lemma \ref{lema:relacion_contenido_Hausodrf_capacidad} gives the desired result.
\end{proof}

We are now ready to verify that $\omega^p_\Omega$, $B_R$, and $G_B$ satisfy the assumptions of Theorem \ref{teo:main1}.

\subsection{Hypothesis a)}
Let $x\in G_B\cap 2B_R\subset E_{m,\beta}$ and $0<r\leq 8r(B_R)=232 r(R)<\beta$ by \ref{enum:beta}. Thus,
\begin{equation*}
    \Theta^n_{\omega^p_\Omega}(x,r)\leq m \Theta^n_{\omega^p_\Omega}(x,8r(B_R))\leq \frac{m}{8^n}\dfrac{\omega^p_\Omega(10B_R)}{r(B_R)^n}\leq C_0 m\Theta^n_{\omega^p_\Omega}(B_R).
\end{equation*}
Next, we estimate $\mathcal{R}_\ast(\chi_{3B_R}\omega^p_\Omega)(x)$ for all $x\in 2B_R\cap G_B$. Observe that for every $r>5r(B_R)$, we have $|\mathcal{R}_r(\chi_{3B_R}\omega^p_\Omega)(x)|=0$. Moreover, if $\frac{r(B_R)}{A_0}\leq r\leq 5r(B_R)$, then
\begin{equation*}
    |\mathcal{R}_r(\chi_{3B_R}\omega^p_\Omega)(x)|\leq A_0^n\dfrac{\omega^p_\Omega(3B_R)}{r(B_R)^n}\leq A_0^n C_0\Theta^n_{\omega^p_\Omega}(B_R).
\end{equation*}
Now assume that $0<r<\frac{r(B_R)}{A_0}$ and split the term as follows:
\begin{flalign*}
    |\mathcal{R}_r(\chi_{3B_R}\omega^p_\Omega)(x)| &= |\mathcal{R}_r\omega^p_\Omega(x)-\mathcal{R}_r(\chi_{(3B_R)^c}\omega^p_\Omega)(x)| \\
    &\leq |\mathcal{R}_r\omega^p_\Omega(x)-\mathcal{R}_{A_0^{-1}r(B_R)}\omega^p_\Omega(x)| + |\mathcal{R}_{A_0^{-1}r(B_R)}\omega^p_\Omega(x)-\mathcal{R}_r(\chi_{(3B_R)^c}\omega^p_\Omega)(x)| \\
    &=: I + II.
\end{flalign*}
For the second term, we easily obtain
\begin{equation*}
    II\leq\int_{3B_R,\,|x-y|>A_0^{-1}r(B_R)}\dfrac{d\omega^p_\Omega(y)}{|x-y|^n}\leq A_0^n\dfrac{\omega^p_\Omega(3B_R)}{r(B_R)^n}\leq A_0^n C_0\Theta^n_{\omega^p_\Omega}(B_R).
\end{equation*}
Estimating the first term $I$ is more involved. Take $i_0>j_0$ such that $A_0^{-i_0}\leq r < A_0^{-i_0+1}$. We can find a cube $Q_1\in\mathcal{D}_{\omega^p_\Omega,i_0-1}$ with $x\in Q_1$, and a chain of cubes $Q_1\subsetneqq Q_2\subsetneqq\cdots\subsetneqq Q_N=\tilde{R}$ for some $\tilde{R}\in\mathcal{N}(R)$, with $N=i_0-j_0$. One can check that
\begin{equation}\label{eq:inclusiones_bolas_riesz_DM}
    B(x,A_0^{l}r)\subset B_{Q_{l+1}}\quad\text{for all } 0\leq l\leq N-1.
\end{equation}
Since $x\in G_B$, $Q_l\notin\mathcal{B}_{\tilde{R}}$ for all $1\leq l\leq N$. To apply Lemma \ref{lema:densidad_acotada_cdc_local}, we rely on doubling cubes and distinguish three cases depending on the first doubling cube along this chain.

\medskip
\noindent\textbf{Case 1:} $Q_1\in\mathcal{D}_{\omega^p_\Omega}^{db}$. Let $\varphi:\mathbb{R}^{n+1}\to[0,1]$ be a smooth radial function vanishing on $\overline{B(0,1)}$ and identically equal to $1$ on $\mathbb{R}^{n+1}\setminus \overline{B(0,2)}$. For any $\rho>0$ and $z\in\mathbb{R}^{n+1}$, denote $\varphi_\rho(z)=\varphi(z/\rho)$ and $\psi_\rho=1-\varphi_\rho$. Set
\begin{equation*}
    \tilde{\mathcal{R}}_\rho\omega^p_\Omega(z)=\int \dfrac{z-y}{|z-y|^{n+1}}\varphi_\rho(z-y)\,d\omega^p_\Omega(y).
\end{equation*}
Note that
\begin{flalign*}
    |\tilde{\mathcal{R}}_r\omega^p_\Omega(x)-\mathcal{R}_r\omega^p_\Omega(x)|&\leq\int_{r<|x-y|<2r}\dfrac{d\omega^p_\Omega(y)}{|x-y|^n}\leq 2^n\Theta^n_{\omega^p_\Omega}(x,2r) \\
    &\leq 2^n m\Theta^n_{\omega^p_\Omega}(x,r(B_R))\leq 2^n C_0 m\Theta^n_{\omega^p_\Omega}(B_R),
\end{flalign*}
and the same bound holds for $|\tilde{\mathcal{R}}_{A_0^{-1}r(B_R)}\omega^p_\Omega(x)-\mathcal{R}_{A_0^{-1}r(B_R)}\omega^p_\Omega(x)|$. Consequently,
\begin{equation*}
    I\leq|\tilde{\mathcal{R}}_r\omega^p_\Omega(x)-\tilde{\mathcal{R}}_{A_0^{-1}r(B_R)}\omega^p_\Omega(x)|+2^{n+1}C_0 m\Theta^n_{\omega^p_\Omega}(B_R).
\end{equation*}
For any $\rho>0$ and $z\in \mathbb{R}^{n+1}\setminus[\operatorname{supp}(\varphi_\rho(x-\cdot)\omega^p_\Omega)\cup\{p\}]$, define the harmonic function
\begin{equation}\label{eq:green_agujero_suave}
    u_\rho(z)=\mathcal{E}(z-p)-\int\mathcal{E}(z-y)\varphi_\rho(x-y)\,d\omega^p_\Omega(y).
\end{equation}
By \eqref{eq:Green_func}, we can write
\begin{equation}\label{eq:funcion_green_rellenando_agujeros}
    G^p_\Omega(z)=u_\rho(z)-\int\mathcal{E}(z-y)\psi_\rho(x-y)\,d\omega^p_\Omega(y)\quad\text{for }\mathcal{L}^{n+1}\text{-a.e. } z\in\mathbb{R}^{n+1}.
\end{equation}
Differentiating \eqref{eq:green_agujero_suave} with respect to $z$, evaluating at $z=x$, and using the fact that the point-mass term $\nabla \mathcal{E}(x-p)$ cancels out when taking differences, we obtain
\begin{flalign*}
    |\tilde{\mathcal{R}}_r\omega^p_\Omega(x)-\tilde{\mathcal{R}}_{A_0^{-1}r(B_R)}\omega^p_\Omega(x)| &= \kappa_{n+1}|\nabla u_r(x)-\nabla u_{A_0^{-1}r(B_R)}(x)| \\
    &\lesssim_n |\nabla u_r(x)|+|\nabla u_{A_0^{-1}r(B_R)}(x)|.
\end{flalign*}
Using \eqref{eq:funcion_green_rellenando_agujeros} and the harmonicity of $u_r$ in $B_r(x)$, we estimate
\begin{flalign}\label{eq:split}
    |\nabla u_r(x)|&\lesssim_n\frac{1}{r\mathcal{L}^{n+1}(B_r(x))}\int_{B_r(x)}|u_r(z)|\,dz \notag\\
    &\leq\frac{1}{r}\dashint_{B_r(x)}\left|\int\mathcal{E}(z-y)\psi_r(x-y)\,d\omega^p_\Omega(y)\right|dz + \frac{1}{r}\dashint_{B_r(x)}G_\Omega^p(z)\,dz \notag\\
    &=: I_1 + I_2.
\end{flalign}
For $I_1$, using Fubini's theorem and $\operatorname{supp}\psi_r(x-\cdot)\subset B(x,2r)$, we get
\begin{flalign*}
    I_1&\lesssim_n\frac{1}{r^{n+2}}\int\int_{B_r(x)}\mathcal{E}(z-y)\psi_r(x-y)\,dz\,d\omega_\Omega^p(y) \\
    &\lesssim_n\frac{1}{r^{n+2}}\int_{B_{2r}(x)}\int_{B_{3r}(y)}\dfrac{dz}{|y-z|^{n-1}}\,d\omega^p_\Omega(y)\lesssim_n\Theta^n_{\omega^p_\Omega}(x,2r)\leq C_0 m\Theta^n_{\omega^p_\Omega}(B_R).
\end{flalign*}
To bound $I_2$, recall $Q_1\in\mathcal{D}_{\omega^p_\Omega}^{db}$. By Lemmas \ref{lema:Cap_8B} and \ref{lema:densidad_acotada_cdc_local},
\begin{equation*}
    G^p_\Omega(z)\lesssim_n\dfrac{\omega^p_\Omega(8\overline{B_{Q_1}})}{\Capp(2\overline{B_{Q_1}}\setminus\Omega)}\lesssim_{n,m,C_0}\dfrac{\omega^p_\Omega(B_{Q_1})}{r(B_{Q_1})^{n-1}}\quad\text{for all } z\in \overline{B_{Q_1}}\cap\Omega.
\end{equation*}
Since $r\leq r(Q_1)\leq C_0 A_0 r$ and \eqref{eq:inclusiones_bolas_riesz_DM} holds, we deduce
\begin{equation*}
    I_2\lesssim_{n,m,C_0}\dfrac{\omega^p_\Omega(B_{Q_1})}{r(B_{Q_1})^n}.
\end{equation*}
Note that $x\in B_{Q_1}\subset B(x,58r(Q_1))$ and $58r(Q_1)\leq 58C_0 A_0^{-j_0}<\beta$ by \ref{enum:beta}. If $58r(Q_1)\leq r(B_R)$, then
\begin{equation*}
    \Theta^n_{\omega^p_\Omega}(B_{Q_1})\leq\Theta^n_{\omega^p_\Omega}(x,58r(Q_1))\leq m \Theta^n_{\omega^p_\Omega}(x,r(B_R))\leq C_0 m\Theta^n_{\omega^p_\Omega}(B_R).
\end{equation*}
In the case $58r(Q_1)>r(B_R)$, it holds that $B_{Q_1}\subset 59C_0 B(R)$. Indeed, letting $x_R$ be the center of $B(R)$ and $y\in B_{Q_1}$, since $r(Q_1)\leq C_0 A_0^{-j_0}\leq C_0 r(R)$ and $x\in 2B_R$, we have
\begin{equation*}
    |x_R-y|\leq|x_R-x|+|x-y|\leq 58r(R)+58r(Q_1)\leq 59C_0 r(R).
\end{equation*}
Thus, using the $(C,P_{\omega^p_\Omega,\gamma})$-doubling properties of $B(R)$, we conclude that
\begin{equation*}
    \Theta^n_{\omega^p_\Omega}(B_{Q_1})\lesssim_n \dfrac{\omega^p_{\Omega}(59C_0 B(R))}{r(B_R)^n}\lesssim_{n,\gamma,a,C_0}\Theta^n_{\omega^p_\Omega}(B_R).
\end{equation*}
Replacing $r$ by $A_0^{-1}r(B_R)$ in \eqref{eq:split}, we obtain the analogous bound $|\nabla u_{A_0^{-1}r(B_R)}(x)|\leq J_1+J_2$. The term $J_1$ is bounded exactly as $I_1$. For $J_2$, since $\tilde{R}\in\mathcal{N}(R)$ is the top cube, $\tilde{R}\in\mathcal{D}_{\omega^p_\Omega}^{db}$ and $\tilde{R}\notin\mathcal{B}_{\tilde{R}}$, we apply Lemmas \ref{lema:Cap_8B} and \ref{lema:densidad_acotada_cdc_local} over $\tilde{R}$ to conclude $J_1+J_2\lesssim_{n,\gamma,a,C_0}\Theta^n_{\omega^p_\Omega}(B_R)$.

\medskip
\noindent\textbf{Case 2:} $Q_1\notin\mathcal{D}_{\omega^p_\Omega}^{db}$, but $\{Q_l\}_{l=1}^N\cap \mathcal{D}_{\omega^p_\Omega}^{db}\neq\emptyset$. Define
\begin{equation*}
    l_0=\min\left\{1\leq l\leq N:Q_l\in\mathcal{D}_{\omega^p_\Omega}^{db}\right\},
\end{equation*}
so $Q_{l_0}$ is the smallest doubling cube in this chain. We decompose
\begin{equation*}
    I\leq |\mathcal{R}_r\omega^p_{\Omega}(x)-\mathcal{R}_{A_0^{l_0-1}r}\omega^p_{\Omega}(x)|+|\mathcal{R}_{A_0^{l_0-1}r}\omega^p_{\Omega}(x)-\mathcal{R}_{A_0^{-1}r(B_R)}\omega^p_{\Omega}(x)|.
\end{equation*}
The term $|\mathcal{R}_{A_0^{l_0-1}r}\omega^p_{\Omega}(x)-\mathcal{R}_{A_0^{-1}r(B_R)}\omega^p_{\Omega}(x)|$ is bounded as in Case 1. For the first term, since $Q_l\notin\mathcal{D}_{\omega^p_\Omega}^{db}$ for all $1\leq l\leq l_0-1$, applying Lemma \ref{lema:acotacion_cadena_cubos_malos} with $\kappa$ chosen larger than $n+1$ in \eqref{eq:C_0^-l_0_decrece_mas_que_A_0} and using \eqref{eq:inclusiones_bolas_riesz_DM}, we get
\begin{flalign*}
    |\mathcal{R}_r\omega_\Omega^p(x)-\mathcal{R}_{A_0^{l_0-1}r}\omega_\Omega^p(x)| &= \left|\int_{A_0^{l_0-1}r\geq|x-y|>r}\dfrac{x-y}{|x-y|^{n+1}}\,d\omega^p_\Omega(y)\right| \\
    &\leq\sum_{j=1}^{l_0-1}\int_{A_0^j r\geq|x-y|>A_0^{j-1}r}\dfrac{d\omega^p_\Omega(y)}{|x-y|^n}\leq \sum_{j=1}^{l_0-1}\dfrac{\omega^p_\Omega(B_{Q_{j+1}})}{(A_0^{j-1}r)^n} \\
    &\leq\dfrac{\omega^p_\Omega(B_{Q_{l_0}})}{(A_0^{l_0-2}r)^n}+\sum_{j=1}^{l_0-2}\dfrac{A_0^{-(n+1)(l_0-j-2)}\omega^p_\Omega(800B(Q_{l_0}))}{(A_0^{j-1}r)^n} \\
    &=\dfrac{\omega^p_\Omega(B_{Q_{l_0}})}{(A_0^{l_0-2}r)^n}+\dfrac{\omega^p_\Omega(800B(Q_{l_0}))}{(A_0^{l_0-1}r)^n}A_0^{2n}\sum_{j=0}^{l_0-3}A_0^{-j} \\
    &\lesssim_{C_0,n}\Theta^n_{\omega^p_\Omega}(B_{Q_{l_0}})\lesssim_{C_0,n,m}\Theta^n_{\omega^p_\Omega}(B_R).
\end{flalign*}

\medskip
\noindent\textbf{Case 3:} $\{Q_l\}_{l=1}^N\cap \mathcal{D}_{\omega^p_\Omega}^{db}=\emptyset$. 
For $x\in G_B\cap 2B_R$, we decompose
\begin{equation*}
    I\leq |\mathcal{R}_r\omega^p_{\Omega}(x)-\mathcal{R}_{A_0^{N-1}r}\omega^p_{\Omega}(x)|+|\mathcal{R}_{A_0^{N-1}r}\omega^p_{\Omega}(x)-\mathcal{R}_{A_0^{-1}r(B_R)}\omega^p_{\Omega}(x)|.
\end{equation*}
The first term is bounded by $\Theta^n_{\omega^p_\Omega}(B_R)$ following the non-doubling sum argument of Case 2. Recalling $N=i_0-j_0$, we have $A_0^{N-1}r\approx_{C_0} r(B_R)$, so the second term is bounded using the doubling cube $R$ similarly as we did in Case 1. Indeed, writing
$\rho_1= \min(A_0^{-1}r(B_R), A_0^{N-1}r)$, $\rho_2= \max(A_0^{-1}r(B_R), A_0^{N-1}r)$, we get
\begin{align*}
|\mathcal{R}_{A_0^{N-1}r}\omega^p_{\Omega}(x)-\mathcal{R}_{A_0^{-1}r(B_R)}\omega^p_{\Omega}(x)| &
    \leq \int_{\rho_1<|x-y|\leq \rho_2} \frac1{|x-y|^n}\,d\omega^p_{\Omega}(y) \\
    &\leq 
    \frac{\omega_\Omega^p(B(x,\rho_2))}{\rho_1^n} \leq C\,\Theta^n_{\omega^p_\Omega}(B_R).
    \end{align*}

\vspace{2mm}

Before checking assumption b) of Theorem \ref{teo:main1} (which is the most delicate part of our proof), we will deal with hypothesis c), using arguments similar to those used for a).

\subsection{Hypothesis c)}
To show this assumption, we may need the explicit expression for $\mathcal{R}\omega^p_\Omega(x)$. Let us show that
    \begin{equation}\label{eq:explicit_formula_riesz}
        \mathcal{R}\omega^p_\Omega(x)=\dfrac{x-p}{|x-p|^{n+1}}\quad\omega^p_\Omega\text{-a.e. }x\in E_{m,\beta}.
    \end{equation}
    As we explained in the beginning of the proof, $\omega^p_\Omega$-a.e. $x\in E_{m,\beta}$ is a density point of $E_{m,\beta}$ for $\omega^p_\Omega$, and those points have a sequence of $(a,P_{\gamma,\omega^p_\Omega})$-doubling balls $\{B(x,r_j)\}_{j\in\mathbb{N}}$ with $r_j\to0$ when $j\to\infty$. Once again, since 
    \begin{equation*}
        \lim_{j\to\infty}\dfrac{\omega^p_\Omega(B(x,r_j)\cap E_{m,\beta})}{\omega^p_\Omega(B(x,r_j))}=1,
    \end{equation*}
    we can take $j_0\in\mathbb{N}$ large enough and argue as in Lemma \ref{lema:densidad_acotada_cdc_local} to show that\\
    $\Capp(\overline{B(x,r_{j})}\setminus\Omega)\gtrsim_{n,m,a}r_{j}^{n-1}$ for every $j\geq j_0$. Now, if we repeat the argument in the proof of \textit{Hypothesis a)} with $B(x,r_j)$ instead of $B_R$, for all $0<r\lesssim r(R)$, we can get the following estimate
    \begin{flalign*}
        \left|\mathcal{R}_{r}\omega^p_\Omega(x)-\frac{x-p}{|x-p|^{n+1}}\right|&\leq \left|\tilde{\mathcal{R}}_{r}\omega^p_\Omega(x)-\frac{x-p}{|x-p|^{n+1}}\right|+2^n\Theta^n_{\omega^p_\Omega}(x,2r)\\
        &\lesssim_{C_0,\gamma,a,n,m}\Theta^n_{\omega^p_\Omega}(x,r_j).
    \end{flalign*}
    Since $E_{m,\beta}\subset Z$, $\Theta^n_{\omega^p_\Omega}(x,r_j)\to0$ when $j\to\infty$, the identity \eqref{eq:explicit_formula_riesz} follows. From this and the fact that $\dfrac{x-p}{|x-p|^{n+1}}$ is a Calder\'on--Zygmund kernel, we can obtain
    \begin{equation}\label{eq:estimacion_integral_riesz}
        \begin{aligned}
             \int_{2B_R\cap G_B}|\mathcal{R}\omega^p_\Omega(x)-m_{\omega^p_\Omega,2B_R\cap G_B}&(\mathcal{R}\omega^p_\Omega)|^2d\omega^p_\Omega(x)\\
             &\lesssim_n\omega^p_\Omega(2B_R\cap G_B)\left( \dfrac{r(R)}{|x_R-p|^{n+1}}\right)^2,
        \end{aligned}
    \end{equation}
    where $x_R$ is the center of $B(R)$. In order to bound this, observe that
    \begin{equation*}
        \dfrac{r(R)}{|x_R-p|^{n+1}}\leq \dfrac{r(R)}{d(\partial\Omega,p)^{n+1}}\leq k(p,n,\Omega)\dfrac{r(R)}{\operatorname{diam}(\partial\Omega)^{n+1}}\omega^p_\Omega(\partial\Omega).
    \end{equation*}
From here, we deduce
\begin{equation*}
    \begin{aligned}
         \dfrac{r(R)}{\operatorname{diam}(\partial\Omega)^{n+1}}&\omega^p_\Omega(\partial\Omega)\\
         &=\left(\dfrac{r(R)}{\operatorname{diam}(\partial\Omega)}\right)^\gamma\dfrac{\omega^p_\Omega(B(x_R,\operatorname{diam}(\partial\Omega)))}{\operatorname{diam}(\partial\Omega)^{n}}\left(\dfrac{r(R)}{\operatorname{diam}(\partial\Omega)}\right)^{1-\gamma}\\
         &\leq 2^{n+\gamma}P_{\omega^p_\Omega}(B(R))\left(\dfrac{r(R)}{\operatorname{diam}(\partial\Omega)}\right)^{1-\gamma}\\
         &\leq2^{n+\gamma} a\Theta^n_{\omega^p_\Omega}(B(R))\left(\dfrac{r(R)}{\operatorname{diam}(\partial\Omega)}\right)^{1-\gamma}.\\
    \end{aligned}
\end{equation*}
This and \eqref{eq:estimacion_integral_riesz} imply that
\begin{equation*}
    \begin{aligned}
        \int_{2B_R\cap G_B}|\mathcal{R}\omega^p_\Omega(x)-&m_{\omega^p_\Omega,2B_R\cap G_B}(\mathcal{R}\omega^p_\Omega)|^2d\omega^p_\Omega(x)\\
        &\lesssim_{n,p,\gamma,\Omega}\omega_\Omega^p(B_R)\Theta^n_{\omega^p_\Omega}(B_R)^2\left(\dfrac{r(R)}{\operatorname{diam}(\partial\Omega)}\right)^{2-2\gamma}.
    \end{aligned}
\end{equation*}
    Now, taking $r(R)$ small enough (that is, $r_{k_0}$ small enough), then the assumption $c)$ is fulfilled.

\subsection{Hypothesis b)}
Throughout this part of the proof, $\tilde{R}$ denotes a cube in $\mathcal{N}(R)$. Let $k\in\mathbb{N}$ be an integer multiple of $4$, sufficiently large, to be chosen later depending only on $C_0,n,a,\gamma$ and $m$. Define
\begin{flalign*}
    \mathcal{G}_0(\tilde{R})=\{&Q\in \mathcal{D}_{\omega_\Omega^p,k}(\tilde{R})~:\text{ at least }k/2\text{ cubes }~Q\subsetneqq S\subsetneqq \tilde{R}~\text{ satisfy } S\in\mathcal{D}_{\omega^p_\Omega}^{db}~\text{ and }~\\
    &\omega_\Omega^p(S\setminus E_{m,\beta})<(\delta')^{1/2}\omega_\Omega^p(S)\},
\end{flalign*}
where $\delta'$ is the constant appearing in Lemma \ref{lema:G_B_mucha_masa_dentro_de_3B}. Also, denote
\begin{equation*}
    \mathcal{G}_0(R')=\bigcup_{\tilde{R}\in\mathcal{N}(R)}\mathcal{G}_0(\tilde{R}).
\end{equation*}
For every $Q\in\mathcal{G}_0(R')$, we can consider the intermediate cube $Q\subsetneqq P_Q\subsetneqq \tilde{R}$ such that $P_Q\in\mathcal{D}_{\omega^p_\Omega}^{db}$, $\omega_\Omega^p(P_Q\setminus E_{m,\beta})<(\delta')^{1/2}\omega_\Omega^p(P_Q)$, and has exactly $k/4$ predecessors in $\mathcal{D}_{\omega_\Omega^p}(\tilde{R})\setminus\{\tilde{R}\}$ also satisfying both conditions. Thus $P_Q$ is maximal among the cubes satisfying these properties. Observe that such cubes satisfy the assumptions of Lemma \ref{lema:densidad_acotada_cdc_local} if we take $\delta'<\frac{1}{4C_0^2}$. We set
\begin{equation}\label{eqdefGR'}
    \mathcal{G}(R')=\{P_Q:Q\in\mathcal{G}_0(R')\}.
\end{equation}

Let $\varphi:\mathbb{R}^{n+1}\rightarrow[0,1]$ be a radial $\mathcal{C}^\infty$ function such that $\varphi|_{B(R)}=1$, $\operatorname{supp}\varphi\subset 2B(R)$ and $\|\nabla^2\varphi \|_\infty\lesssim_nr(B_R)^{-2}$. Now, 
since $p\not\in 
2B(R)$ by \ref{enum:pol_p}, and by Lemma \ref{lema:formula_Green_medida_armonica}, we obtain
\begin{flalign*}
    \omega_\Omega^p\left(B(R)\right)&\leq\int\varphi(y)\,d\omega^p(y)\\
    &=\int_{2B(R)}\Delta\varphi(y)G_\Omega^p(y)\,dy\lesssim_n\sup_{2B(R)}G^p_\Omega(\cdot) r(B_R)^{n-1}.
\end{flalign*}
So taking $q\in 2B(R)$ such that $G_\Omega^p(q)\geq\sup_{2B(R)}G^p_\Omega(\cdot) /2$, we have
\begin{equation}\label{eq:punto_q_en_B_R}
    G_\Omega^p(q)\gtrsim_n\dfrac{\omega_\Omega^p(B(R))}{r(B_R)^{n-1}}\gtrsim_{C_0}\dfrac{\omega_\Omega^p(B_R)}{r(B_R)^{n-1}}.
\end{equation}
The goal is to use a touching point argument with a ball centered in $q$.

First we remark that the touching ball $\overline{B(q,d(q,\partial\Omega))}$ intersects $\partial\Omega$ in a point of $R'$. In fact, since $q\in 2B(R)$, we have
\begin{equation*}
\overline{B(q,d(q,\partial\Omega))}\cap\partial\Omega\subset B(q,3r(B))\cap\partial\Omega\subset B_R\cap\partial\Omega\subset R'.
\end{equation*}
We would also like the touching ball to intersect $\bigcup_{P\in\mathcal{G}(R')}P$.
To overcome this difficulty, we will modify the domain by removing some 'bad' cubes from the boundary.

We first show that the point $q$ has to be 'away' from the cubes in $\mathcal{G}(R')$ if the integer $k$ that defines the scale of the cubes in $\mathcal{G}_0(R')$ is large enough. Aiming for a contradiction, assume that 
\begin{equation}\label{eq:contradiccion_q_cerca_cubos_buenos}
    q\in 4B_{P_Q} \quad\text{for some}\quad Q\in\mathcal{G}_0(R').
\end{equation}
We know that there exists a sequence of cubes
\begin{equation*}
    P_0=P_Q\subsetneqq P_1\subsetneqq\cdots\subsetneqq P_N=\tilde{R}\quad\text{in}\quad\mathcal{D}_{\omega_\Omega^p}(\tilde{R}).
\end{equation*}
Denoting by $B_{P_i}$ the balls associated to the cubes $P_i$, by definition we know that there are exactly $k/4$ intermediate cubes $P_i$ with $0\leq i\leq N-1$ such that $\omega_\Omega^p(P_i\setminus E_{m,\beta})<(\delta')^{1/2}\omega^p(P_i)$ and $P_i\in\mathcal{D}^{db}_{\omega_\Omega^p}$. Define the open sets
\begin{equation*}
    \Omega_i=5B_{P_i}\cap\Omega\quad\text{for every}\quad0\leq i\leq N.
\end{equation*} 
Observe that each $\Omega_i$ is Wiener regular, $p\not\in570C_0B(R)$ by \ref{enum:pol_p}, and $G_{\Omega}^p|_{\Omega_i}$ is harmonic in $\Omega_i$ and vanishes as $x\to\xi\in\partial\Omega\cap5B_{P_i}$. Now, for every $x\in \partial (5B_{P_i})\cap\Omega$

\begin{equation}\label{eq:primera_estimacion_contradiccion_q}
\begin{aligned}
    G^p_{\Omega}(x)&=\int_{\partial\Omega_{i+1}}G^p_{\Omega}(y)\,d\omega^x_{\Omega_{i+1}}(y)=\int_{\partial (5B_{P_{i+1}})\cap\Omega}G^p_{\Omega}(y)\,d\omega^x_{\Omega_{i+1}}(y)\\
    &\leq\omega^x_{\Omega_{i+1}}(\partial (5B_{P_{i+1}})\cap\Omega)\sup_{\partial (5B_{P_{i+1}})\cap\Omega}G^p_{\Omega}(\cdot).
\end{aligned}
\end{equation}
To apply the estimates from Section 2, we need to verify a geometric inclusion. This is the purpose of the following lemma.

\begin{lema}\label{lema:5B_P_dentro_de_0.99_5B}
    $5B_{P_i}\subset0.99\frac{5}{4}B_{P_{i+1}}$ for every $0\leq i\leq N-1$.
\end{lema}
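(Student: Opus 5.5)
The plan is a direct computation from the geometric properties of the David--Mattila lattice in Section \ref{sec2.3}. First I unwind the notation: since $B_Q=29B(Q)$, the claimed inclusion says that
\[
B\bigl(x_{P_i},145\,r(P_i)\bigr)\subset B\bigl(x_{P_{i+1}},\,0.99\cdot\tfrac54\cdot 29\, r(P_{i+1})\bigr),
\]
where $0.99\cdot\frac54\cdot 29=35.8875$. It therefore suffices to show that every $y\in 5B_{P_i}$ satisfies $|y-x_{P_{i+1}}|<35\,r(P_{i+1})$.

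The key steps are, in order:

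\emph{Comparison of radii.} The chain $P_0\subsetneqq P_1\subsetneqq\cdots\subsetneqq P_N$ runs through consecutive generations, so if $P_{i+1}\in\mathcal{D}_{\omega^p_\Omega,j}$ then $P_i\in\mathcal{D}_{\omega^p_\Omega,j+1}$. (If the chain skipped generations, the estimate below would only improve.) By \eqref{eq:control_radios_DM},
\[
r(P_i)\leq C_0A_0^{-j-1}=\frac{C_0}{A_0}\,A_0^{-j}\leq \frac{C_0}{A_0}\,r(P_{i+1}).
\]
Since $A_0>5000C_0$, this gives $r(P_i)<\frac{1}{5000}\,r(P_{i+1})$.

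\emph{Position of the center.} We have $x_{P_i}\in P_i\subset P_{i+1}$. By \eqref{eq:cadena_inclsuion_cubos_bolas_DM}, $P_{i+1}\subset F\cap 28B(P_{i+1})$, so $|x_{P_i}-x_{P_{i+1}}|<28\,r(P_{i+1})$.

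\emph{Triangle inequality.} For $y\in 5B_{P_i}$ we get
\[
|y-x_{P_{i+1}}|<28\,r(P_{i+1})+145\,r(P_i)<\Bigl(28+\tfrac{145}{5000}\Bigr)r(P_{i+1})<28.03\,r(P_{i+1}).
\]
This is well below $35.8875\,r(P_{i+1})$, which proves the inclusion.

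There is no real obstacle here. The only point needing care is confirming that consecutive cubes in the chain differ by at least one generation, so that the factor $C_0/A_0$ appears. This holds because the inclusions are strict and the lattice is nested, so strictly contained cubes lie in strictly deeper generations. The large separation $A_0>5000C_0$ then absorbs the radius of the smaller ball entirely, leaving plenty of room for the factor $0.99$.
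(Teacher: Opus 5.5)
Your proof is correct and is essentially the paper's argument: the triangle inequality through $x_{P_i}\in P_i\subset P_{i+1}\subset 28B(P_{i+1})$, together with $5r(B_{P_i})=145\,r(P_i)\le 145\,C_0A_0^{-1}r(P_{i+1})$ and $A_0>5000C_0$, gives $|y-x_{P_{i+1}}|<(28+\frac{145}{5000})\,r(P_{i+1})<0.99\cdot\frac54\cdot 29\,r(P_{i+1})$. You also make explicit a point the paper leaves implicit: strictly nested cubes of the lattice lie in strictly finer generations, so the factor $C_0/A_0$ is always available.
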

\begin{proof}
    Recall from \eqref{eq:control_radios_DM} that $29A_0^{-{k(i)}}\leq r(B_{P_i})\leq 29C_0A_0^{-{k(i)}}$, by \eqref{eq:cadena_inclsuion_cubos_bolas_DM} $\partial\Omega\cap B(P_i)\subset P_i\subset\partial\Omega \cap28B(P_i)$ and also $A_0>5000C_0$. Since $P_{i}\subset P_{i+1}$, if $x_i$ and $x_{i+1}$ are the centers of $B(P_i)$ and $B(P_{i+1})$ respectively, then for every $y\in5B_{P_i}$
    \begin{flalign*}
        &| x_{i+1}-y |\leq | x_{i+1}-x_i |+| x_{i}-y |<28r(P_{i+1})+5r(B_{P_i})\\
        & <28r(P_{i+1})+145C_0A_0^{-k(i)}<28r(P_{i+1})+\dfrac{145}{5000}A_0^{-k(i)+1}\\
        &<\left(28+\dfrac{145}{5000}\right)r(P_{i+1})<0.99\dfrac{5}{4}(29r(P_{i+1}))=0.99\dfrac{5}{4}r(B_{P_{i+1}}).
    \end{flalign*}
\end{proof}
Using that $\omega_{\Omega_i}$ is a probability measure and \eqref{eq:primera_estimacion_contradiccion_q}, we get the trivial estimate
\begin{equation*}
    \sup_{\partial (5B_{P_{i}})\cap\Omega}G^p_{\Omega}(\cdot)\leq \sup_{\partial (5B_{P_{i+1}})\cap\Omega}G^p_{\Omega}(\cdot),\quad\text{for every }~0\leq i\leq N-1.
\end{equation*}
We now improve this inequality. Assume that $P_i\in\mathcal{D}^{db}_{\omega^p}$ and $\omega_\Omega^p(P_i\setminus E_{m,\beta})<(\delta')^{1/2}\omega^p(P_i)$. Then, since $\Omega_{i+1}$ is bounded and $x\in\partial (5B_{P_i})\cap\Omega$, we get
\begin{equation*}
    \begin{aligned}
        \omega_{\Omega_{i+1}}^x&(\partial (5B_{P_{i+1}})\cap\Omega)=1-\omega_{\Omega_{i+1}}^x\left( 5\overline{B_{P_{i+1}}}\cap\partial\Omega\right)\\
        &\leq1-\omega_{\Omega_{i+1}}^x\left( 0.99(5\overline{B_{P_{i+1}}})\right)\overset{\text{L.}\ref{lema:5B_P_dentro_de_0.99_5B}~\&~ \ref{lema:cota_superior_medida_armonica_bolas}}{\leq}
        1-c(n)\dfrac{\Capp\left( 0.99\frac{5}{4}\overline{B_{P_{i+1}}}\setminus\Omega_{i+1} \right)}{r(B_{P_{i+1}})^{n-1}}\\
        &=1-c(n)\dfrac{\Capp\left( 0.99\frac{5}{4}\overline{B_{P_{i+1}}}\setminus\Omega \right)}{r(B_{P_{i+1}})^{n-1}}\overset{\text{L.} \ref{lema:5B_P_dentro_de_0.99_5B}}{\leq}1-c(n)\dfrac{\Capp\left( \overline{B_{P_{i}}}\setminus\Omega \right)}{r(B_{P_{i+1}})^{n-1}}\\
        &\overset{\text{L.}\ref{lema:densidad_acotada_cdc_local}}{\leq}1-\hat{c}(n,m,C_0)\dfrac{r(B_{P_i})^{n-1}}{r(B_{P_{i+1}})^{n-1}}\overset{\eqref{eq:control_radios_DM}}{\leq}1-c(n,m,C_0)=\lambda(n,m,C_0)<1.
    \end{aligned}
\end{equation*}
Combining this estimate with \eqref{eq:primera_estimacion_contradiccion_q}, we deduce that, for every cube such that $P_i\in\mathcal{D}^{db}_{\omega^p}$ and $\omega_\Omega^p(P_i\setminus E_{m,\beta})<(\delta')^{1/2}\omega^p(P_i)$, 
    \begin{equation*}
        \sup_{\partial (5B_{P_{i}})\cap\Omega}G^p_{\Omega}(\cdot)\leq\lambda \sup_{\partial (5B_{P_{i+1}})\cap\Omega}G^p_{\Omega}(\cdot).
    \end{equation*}
    Since there are $k/4$ cubes $P_i$ as above, by the maximum principle and the fact that 
    $5B_{P_{N-1}} \subset B_{\tilde{R}}$,     
    we get
    \begin{equation}\label{eq:contradiccion_k_grande}
        G^p_\Omega(q)\leq\sup_{\partial (5B_{P_0})\cap\Omega}G^p_{\Omega}(\cdot)\leq\lambda^{k/4} \sup_{\partial( 5B_{P_{N-1}})\cap\Omega}G^p_{\Omega}(\cdot)\leq\lambda^{k/4} \sup_{\partial B_{\tilde{R}}\cap\Omega}G^p_{\Omega}(\cdot).
    \end{equation}
    Now, we can argue as in \eqref{eq:800B(R)_tiene_masa_en_Emb} and Lemma \ref{lema:densidad_acotada_cdc_local} to obtain
    \begin{equation}\label{eq:cota_inferior_capacidad_bola_grande}
        \Capp(118C_0\overline{B(R)}\setminus\Omega)\gtrsim_{n,m,C_0,a,\gamma}r(B_R)^{n-1}.
    \end{equation}
    Since $B_{\tilde{R}}\subset 59C_0B(R)$, by \eqref{eq:cota_inferior_capacidad_bola_grande}, the maximum principle, Lemma \ref{lema:Cap_8B}, and the $(C,P_{\omega^p_\Omega})$-doubling property of $R$, we get 
    \begin{equation*}
        \sup_{\partial B_{\tilde{R}}\cap\Omega}G^p_{\Omega}(\cdot)\leq \sup_{\partial (59C_0B(R))\cap\Omega}G^p_{\Omega}(\cdot)\lesssim_{n,m,C_0,a,\gamma}\dfrac{\omega^p_\Omega(B_R)}{r(B_R)^{n-1}},
    \end{equation*}
    from this, \eqref{eq:punto_q_en_B_R}, and \eqref{eq:contradiccion_k_grande}, we conclude
    \begin{equation*}
        0<G^p_\Omega(q)\lesssim_{n,m,C_0,a,\gamma}\lambda^{k/4} G^p_\Omega(q).
    \end{equation*}
    Choosing $k=k(n,m,C_0,a,\gamma)$ sufficiently large contradicts this inequality, so \eqref{eq:contradiccion_q_cerca_cubos_buenos} cannot hold.
       Fixing that $k$  sufficiently large, then we have
    \begin{equation}\label{eq:distancia_q_cubos_buenos}
        d(q,P)\geq d(q,B_{P})\geq 3r(B_{P})\quad\text{for every}\quad P\in\mathcal{G}(R').
    \end{equation}
    
    Next we intend to show that the measure $\omega_\Omega^p(\bigcup_{Q\in\mathcal{G}_0(R')}Q)$ is large, in some sense that will be more precise below. To this end, first we claim
    \begin{equation}\label{eq:cardinal_vecinos}
        \# \mathcal{N}(R)\leq (143C_0)^{n+1}.
    \end{equation}
        Recall that $\tilde{R}\in\mathcal{N}(R)$ if $\tilde{R}\in\mathcal{D}_{\omega^p_\Omega,j_0}$ and $\tilde{R}\cap 3B_R\neq\emptyset$, so for every $y\in B(\tilde{R})$, if $y_R\in \tilde{R}\cap 3B_R$, then
        \begin{equation*}
            |y-x_R|\leq|y-y_R|+|y_R-x_R|\leq 56r(\tilde{R})+87r(R)\leq143C_0A_0^{-j_0}.
        \end{equation*}
        Hence $B(\tilde{R})\subset B(x_R,143C_0A_0^{-j_0})$, arguing as in Lemma \ref{lema:cardinal_childs_en_DM}, we obtain \eqref{eq:cardinal_vecinos}.

      Now, for each $\tilde{R}\in\mathcal{N}(R)$, we set
       \begin{equation*}
            \operatorname{Bad}_1(\tilde{R})=\{Q\in\mathcal{D}_{\omega_\Omega^p,k}(\tilde{R}):\text{at least }k/4\text{ cubes }Q\subsetneqq P\subsetneqq \tilde{R}\text{ satisfy } P\not\in\mathcal{D}^{db}_{\omega_\Omega^p}\}
        \end{equation*}
        and
        \begin{flalign*}
            \operatorname{Bad}_2(\tilde{R})=\{&Q\in\mathcal{D}_{\omega_\Omega^p,k}(\tilde{R}):\text{at least }k/4\text{ cubes }Q\subsetneqq P\subsetneqq \tilde{R}\text{ satisfies }\\
            &\omega_\Omega^p(P\setminus E_{m,\beta})\geq(\delta')^{1/2}\omega_\Omega^p(P)\}.
        \end{flalign*}
We also denote
 \begin{equation*}
            \operatorname{Bad}_1(R')=\bigcup_{\tilde{R}\in\mathcal{N}(R)}\operatorname{Bad}_1(\tilde{R}),\quad \operatorname{Bad}_2(R')=\bigcup_{\tilde{R}\in\mathcal{N}(R)}\operatorname{Bad}_2(\tilde{R}),
        \end{equation*}
so that we have
        \begin{equation*}
            \operatorname{Bad}_0(R')=\mathcal{D}_{\omega_\Omega^p,k}(R')\setminus \mathcal{G}_0(R')=\operatorname{Bad}_1(R')\cup\operatorname{Bad}_2(R').
        \end{equation*}

 First we will estimate $\omega_\Omega^p(\bigcup_{Q\in\operatorname{Bad}_1(R')}Q)$. For a fixed $Q\in\operatorname{Bad}_1(R')\cap \mathcal{D}_{\omega_\Omega^p}(\tilde R)$ with $\tilde{R}\in\mathcal{N}(R)$ 
 and $P$ as in the above definition of
  $\operatorname{Bad}_1(\tilde{R})$, by Lemma \ref{lema:acotacion_cadena_cubos_malos} and the fact that $\omega^p_\Omega(800B(P))\leq\omega^p_\Omega(800B(P'))$ for every $P\in\operatorname{Ch}(P')$, we deduce
    \begin{equation*}
    \begin{aligned}
        \omega_{\Omega}^p(Q)\leq\omega^p_\Omega(800B(Q))\leq \left(A_0^{-\kappa}\right)^{k/4}\omega^p_\Omega(800B(\tilde{R})).
    \end{aligned}
    \end{equation*}
    One easily checks that $800B(\tilde{R})\subset 829C_0B(R)\subset 2^{10+\lceil \log_2C_0\rceil}B(R)$, and so by the $(C,P_{\omega^p_\Omega,\gamma})$-doubling property of $B(R)$ and \eqref{eq:P-doubling_implica_doubling}, we get 
    \begin{equation*}
        \omega_{\Omega}^p(Q)\leq (A_0^{-\kappa})^{k/4}2^{11(n+1)}C_0^{n+1}C\omega^p_\Omega(R)
        = A_0^{-2(n+1)k}\,2^{11(n+1)}\,C_0^{n+1}C\omega^p_\Omega(R),
    \end{equation*}
    recalling that $\kappa=8(n+1)$.

    By Lemma \ref{lema:cardinal_childs_en_DM} and \eqref{eq:cardinal_vecinos}, for $k$ large enough then we have
    \begin{equation}\label{eqbad1**}
    \begin{aligned}
        \omega^p_\Omega\left( \bigcup_{Q\in\operatorname{Bad}_1(R')}Q \right)&\leq \#\mathcal{N}(R)A_0^{-2(n+1)k}\,2^{11(n+1)}\,C_0^{n+1}C\omega^p_\Omega(R)\\
  & \leq (143C_0)^{n+1} (23C_0A_0)^{(n+1)k}A_0^{-2(n+1)k}\,2^{11(n+1)}\,C_0^{n+1}C\omega^p_\Omega(R)\\
              &\leq2^{-2}C_0^{-1}\omega^p_\Omega(R).
    \end{aligned}    
    \end{equation}
    
 Next we deal with  $\omega_\Omega^p(\bigcup_{Q\in\operatorname{Bad}_2(R')}Q)$. 
  For each $Q\in\operatorname{Bad}_2(R')\cap \mathcal{D}_{\omega_\Omega^p}(\tilde R)$ with $\tilde{R}\in\mathcal{N}(R)$, we can find at least $k/4$ intermediate cubes $P$, $Q\subsetneqq P\subsetneqq \tilde R$ such that $\omega_\Omega^p(P\setminus E_{m,\beta})\geq(\delta')^{1/2}\omega_\Omega^p(P)$.
Let $S_Q$ be the maximal cube $P$ of this form.  
Using that the family $I=\{S_Q:Q\in\operatorname{Bad}_2(R')\}$ is pairwise disjoint (by maximality) and the fact that, by \eqref{eq:R'_tiene_casi_toda_la_masa_en_E_mb}, $\omega_\Omega^p(R'\setminus E_{m,\beta})<\delta'\omega_\Omega^p(R')$, we deduce
    \begin{equation}\label{eqbad2**}
        \begin{aligned}
            \omega_\Omega^p\left( \bigcup_{Q\in\operatorname{Bad}_2(R')}Q \right)&\leq \sum_{Q\in I}\omega_\Omega^p(S_Q)\leq(\delta')^{-1/2}\sum_{Q\in I}\omega_\Omega^p(S_Q\setminus E_{m,\beta})\\
            &\leq(\delta')^{-1/2}\omega^p_\Omega(R'\setminus E_{m,\beta})\leq(\delta')^{1/2}\omega^p_\Omega(R')\\
            &\leq(\delta')^{1/2}2^{8(n+1)}C_0^{n+1}C\omega^p_\Omega(R).
        \end{aligned}
    \end{equation}
    Choosing $\delta'<(2^{8(n+1)}C_0^{n+1}C)^{-2}2^{-4}C_0^{-2}$, then we get
    \begin{equation*}
        \omega_\Omega^p\left( \bigcup_{Q\in\operatorname{Bad}_2(R')}Q \right)\leq 2^{-2}C_0^{-1}\omega_\Omega^p(R).
    \end{equation*}
 Together with the previous estimate for the family $\operatorname{Bad}_1(R')$, this gives
    \begin{equation}\label{eq:medida_cubos_bad}
        \omega_\Omega^p\left( \bigcup_{Q\in\operatorname{Bad}_0(R')}Q \right)\leq 2^{-1}C_0^{-1}\omega_\Omega^p(R).
    \end{equation}
    From this estimate and the fact that $R\in\mathcal{D}^{db}_{\omega^p_\Omega}$, we obtain
    \begin{equation}\label{eq:B(R)_contiene_cubos_no_malos}
        \omega_\Omega^p\left( B(R)\setminus\bigcup_{Q\in\operatorname{Bad}_0(R')}Q \right)>0.
    \end{equation}
    In particular we have $\omega_\Omega^p(\bigcup_{Q\in\mathcal{G}(R')}Q)\geq     
    \omega_\Omega^p(\bigcup_{Q\in\mathcal{G}_0(R')}Q)>0$ (recall that the family $\mathcal{G}(R')$ was defined in \eqref{eqdefGR'}).

    We need to introduce another family of bad cubes:    
    \begin{equation*}
        \operatorname{Bad}(R')=\operatorname{Bad}_0(R')\setminus \left\{ Q\in\mathcal{D}_{\omega^p,k}(R'):Q\subset\bigcup_{P\in\mathcal{G}(R')}P \right\}.
    \end{equation*}
    Notice that the new family $\operatorname{Bad}(R')$ is obtained by removing from $\operatorname{Bad}_0(R')$ the cubes that
  are contained in any of the cubes in $\mathcal{G}(R')$. This refinement will be essential to preserve some capacitary estimates for the new
  domain $\tilde \Omega$ that we will define now.

    We are now ready to introduce the modified domain. We set
    \begin{equation*}
        K=\overline{\partial\Omega\setminus\bigcup_{Q\in\operatorname{Bad}(R')}\overline{Q}}\quad\text{and}\quad\tilde{\Omega}=K^c,
    \end{equation*}
     so that $\tilde \Omega =
        (\partial\tilde{\Omega})^c$.
   Since we cannot ensure the new open set $\tilde{\Omega}$ to be Wiener regular, we must be careful when studying the harmonic measure for $\tilde \Omega$. For every $\varepsilon>0$ denote by $\tilde{\Omega}_\varepsilon$ the Wiener regular set associated to $\tilde{\Omega}$ and $\varepsilon$ obtained in the Lemma \ref{lema:modificacion_dominios_no_WR}, that is, 
   \begin{equation*}        \tilde\Omega_\varepsilon=\tilde\Omega\setminus\bigcup_{i\in I}\overline{B_i},
   \end{equation*}
   for a suitable family of closed balls $\overline B_i$ centered in $\partial\tilde\Omega$. For the same family of balls, we also consider the set
    \begin{equation*}
        \Omega_\varepsilon=\Omega\setminus\bigcup_{i\in I}\overline{B_i},
    \end{equation*}
    which is also open, coincides with $\Omega\setminus\overline{\bigcup_{i\in I}\overline{B_i}}$, and is Wiener regular, by the same arguments as in the proof of Lemma \ref{lema:modificacion_dominios_no_WR}. 
    Note that $\Omega\subset\tilde{\Omega}$ and $\Omega_\varepsilon\subset\tilde{\Omega}_\varepsilon$. Moreover, since the functions
    \begin{equation*}
        x\mapsto \int_{\partial\Omega_\varepsilon}\mathcal{E}(y-p)\,d\omega^x_{\Omega_\varepsilon}(y)\quad\text{and}\quad x\mapsto \int_{\partial\tilde{\Omega}_\varepsilon}\mathcal{E}(y-p)\,d\omega^x_{\tilde{\Omega}_\varepsilon}(y)
    \end{equation*}
    are harmonic in $x\in\Omega_\varepsilon$ and since the Green functions are non-negative and both $\tilde{\Omega}_\varepsilon$ and $\Omega_\varepsilon$ are Wiener regular, the maximum principle yields
    \begin{equation*}
        G^p_{\tilde{\Omega}_\varepsilon}(x)-G^p_{\Omega_\varepsilon}(x)\geq0\quad\text{for all }~x\in\Omega_\varepsilon\setminus\{p\}.
    \end{equation*}    
    In particular, this holds for $q\in 2B(R)\cap\Omega_\varepsilon\setminus\{p\}$ if $\varepsilon$ is small enough (depending on $q$). Then we have
    \begin{equation}\label{eq:aprox_Green_epsilon_en_q}
        G^p_{\tilde{\Omega}_\varepsilon}(q)\geq G^p_{\Omega_\varepsilon}(q)\overset{\varepsilon\to0}{\longrightarrow}G^p_{\Omega}(q)\overset{\eqref{eq:punto_q_en_B_R}}{\gtrsim_{n}}\dfrac{\omega_\Omega^p(B_R)}{r(B_R)^{n-1}},
    \end{equation}
    where the limit when $\varepsilon\to0$ follows by the same arguments used to prove Lemma \ref{lema:aproximacion_medida_con_dominios_WR_modificados} in \cite[Chapter 6]{HM}.

    Using that $q\in2B(R)$ together with the fact that, by construction, $\partial\tilde\Omega \cap \overline{R'}$ consists of a finite union of closures of cubes from $\mathcal{G}(R')$, 
     it follows that there exists at least one cube   $P_1\in\mathcal{G}(R')$ such that $d(q,\partial\tilde{\Omega})=d(q,P_1)$. Since  $P_1$ contains a cube $Q_1\in\mathcal{G}_0(R')$ such that $Q_1\subsetneq P_{1}$ with at least $k/2$ intermediate cubes with 'good' properties, then $C_0A_0^{-\frac{k}{4}-j_0}\geq r(B({P_{1}}))\geq A_0^{-\frac{3k}{4}-j_0}$. 
    
    Since $\tilde{\Omega}_\varepsilon$ is obtained by removing some closed balls of radius at most $\varepsilon$, there are two possibilities: either there exists some cube $P_1\in\mathcal{G}(R')$ such that
    \begin{equation*}
        d(q,\partial\tilde{\Omega}_\varepsilon)=d(q,P_1)\quad\text{ for infinitely many values of }\varepsilon\to0,
    \end{equation*}
    or there exists a ball $\overline{B_{i_\varepsilon}}$ used in the construction of $\tilde{\Omega}_\varepsilon$ centered at some point of $\overline{P_1}$ for some $P_1\in\mathcal{G}(R')$ such that
    \begin{equation*}
        d(q,\partial\tilde{\Omega}_\varepsilon)=d(q,\overline{B_{i_\varepsilon}})\quad\text{ for infinitely many values of }\varepsilon\to0.
    \end{equation*}
    Fixing some $P_1$ satisfying one of the two conditions, we define 
    \begin{equation*}
        B_1=B(x_1,r(B_{P_1}))=B_{P_1}.
    \end{equation*}
    
     Our objective is to prove that the ball $B_1$ satisfies the required properties for the Hypothesis b).
    Since $A_0^{-\frac{3k}{4}-j_0}\leq r(P_1)\leq C_0A_0^{-\frac{k}{4}-j_0}$ and $A_0^{-j_0}\leq r(R)\leq C_0A_0^{-j_0}$, we have
    \begin{equation*}
        C_0^{-1}A_0^{-\frac{3k}{4}}r(B_R)\leq r(B_1)\leq C_0A_0^{-\frac{k}{4}}r(B_R),
    \end{equation*}
    so $\delta_0(n,m,C_0,a,\gamma)=C_0^{-1}A_0^{-\frac{3k}{4}}$ and $\delta_1(n,m,C_0,a,\gamma)=C_0A_0^{-\frac{k}{4}}<1/2$. It remains to verify that $x_1\in B_R$. Recall that $q\in 2B(R)$ and \eqref{eq:B(R)_contiene_cubos_no_malos}, so 
    \begin{equation}\label{eq:cota_superior_distancia_touching_point}
        r:=d(q,\partial\tilde{\Omega}_\varepsilon)\leq d\left(q, (\partial\Omega \cap B(R))\setminus\bigcup_{Q\in\operatorname{Bad}_0(R')}Q \right)+\varepsilon<4r(R),
    \end{equation}
    this implies that
    \begin{equation*}
        |x_1-x_R|\leq|x_1-q|+|x_R-q|\leq 28\,r(P_1)+4r(R)+2r(R)<r(B_R).
    \end{equation*}

    To estimate the density ratio $\omega_\Omega^p(B_1)/{r(B_1)^n}$ from below, we will use a touching point argument.
    First, observe that by \eqref{eq:distancia_q_cubos_buenos}, for $\varepsilon>0$ small enough, we have
    \begin{equation}\label{eq:cota_inferior_distancia_touching_point}
        r=d(q,\partial\tilde{\Omega}_\varepsilon)\geq d(q,\partial\tilde{\Omega})-\varepsilon =d(q,P_1)-\varepsilon\geq d(q,B_1)-\varepsilon\geq 2r(B_1).
    \end{equation}
    Consider $\xi\in\overline{B(q,r)}\cap\partial\tilde{\Omega}_\varepsilon$ and let $q_1=\frac{\xi+q}{2}$ be mid-point between $\xi$ and $q$, so that $\overline{B(q_1,r/2)}\cap\partial\tilde{\Omega}_\varepsilon=\{\xi\}$. By rotation and translation invariance of harmonic measure, we may assume that $q_1=0$ and $\xi=(0,\dots,0,-|\xi|)$. We define
    \begin{equation*}
        \Gamma=\{ x\in\partial B(q_1,r/2)~:~x_{n+1}\geq0 \}.
    \end{equation*}
    Since $d(\Gamma,\partial B(q,r))\approx r$ and $G_{\tilde{\Omega}_\varepsilon}^p$ is harmonic in $B(q,r)\subset \tilde{\Omega}_\varepsilon\setminus\{p\}$, by a Harnack chain argument $G_{\tilde{\Omega}_\varepsilon}^p(y)\approx_n G_{\tilde{\Omega}_\varepsilon}^p(q)$ for all $y\in\Gamma$. From this and the maximum principle we deduce
    \begin{equation*}
        G_{\tilde{\Omega}_\varepsilon}^p(z)\gtrsim_n \omega^z_{B(q_1,r/2)}(\Gamma)G_{\tilde{\Omega}_\varepsilon}^p(q),\quad\text{for all }~z\in B(q_1,r/2).
    \end{equation*}
    In particular, if we take $z\in B(q_1,r/2)\cap B(\xi,r/4)$, by the explicit formula for the harmonic measure for the ball $B(q_1,r/2)$, we get
    \begin{equation*}
        \begin{aligned}
            \omega^z_{B(q_1,r/2)}(\Gamma)&\approx_n\int_\Gamma\dfrac{(r/2)^2-|z|^2}{(r/2)|y-z|^{n+1}}\,d\sigma (y)\\
            &=d(z,\partial B(q_1,r/2)) \int_\Gamma\dfrac{(r/2)+|z|}{(r/2)|y-z|^{n+1}}\,d\sigma (y)\\
            &\gtrsim_nd(z,\partial B(q_1,r/2))\dfrac{\sigma(\Gamma)}{r^{n+1}}\approx_n\dfrac{d(z,\partial B(q_1,r/2))}{r},
        \end{aligned}
    \end{equation*}
    where $\sigma$ denotes the surface measure. This implies that
    \begin{equation}\label{eq:conexion_funcion_green_distancia_frontera}
        G_{\tilde{\Omega}_\varepsilon}^p(z)\gtrsim_n\dfrac{d(z,\partial B(q_1,r/2))}{r}\,G_{\tilde{\Omega}_\varepsilon}^p(q),\quad\text{for all }~z\in B(q_1,r/2)\cap B(\xi,r/4).
    \end{equation}
    
    We claim that $B\left(\xi,r(P_1)/4\right)\subset B_1$. Indeed, for every $y\in B\left(\xi,r(P_1)/4\right)$, if $\xi\in\overline{B_{i_\varepsilon}}$ and $x_{i_\varepsilon}\in \overline{P_1}$ is the center of the ball $B_{i_\varepsilon}$, then
    \begin{equation*}
        \begin{aligned}
            |y-x_{P_1}|&\leq |y-\xi|+|\xi-x_{i_\varepsilon}|+|x_{i_\varepsilon}-x_{P_1}|<r(P_1)/4+\varepsilon+28r(P_1)\\
            & <r(P_1)/2+28r(P_1)<29r(P_1)=r(B_1).
        \end{aligned}
    \end{equation*}
    In case that $\xi\in\overline{P_1}$, we get the following
    \begin{equation*}
        |y-x_{P_1}|\leq|y-\xi|+|\xi-x_{P_1}|<r(P_1)/4+28r(P_1)<r(B_1).
    \end{equation*}
    
    By \eqref{eq:cota_inferior_distancia_touching_point}, we can find $z_0\in B(q_1,r/2)\cap B(\xi,r(P_1)/4)\subset B(q_1,r/2)\cap B(\xi,r/4)$ such that
    \begin{equation*}
        d(z_0,\partial B(q_1,r/2))\geq \dfrac{r(P_1)}{8}.
    \end{equation*}
    Since $z_0\in B_1$, by Lemma \ref{lema:Cap_8B}, \eqref{eq:conexion_funcion_green_distancia_frontera}, and \eqref{eq:cota_superior_distancia_touching_point}, we deduce that
    \begin{equation}\label{eq:medida_armonica_8B_P_1}
        \omega^p_{\tilde{\Omega}_\varepsilon}(8\overline{B_1})\gtrsim_nG^p_{\tilde{\Omega}_\varepsilon}(z_0)\Capp(2\overline{B_1}\setminus\tilde{\Omega}_\varepsilon)\gtrsim_n\dfrac{r(B(P_1))}{r(B_R)}G^p_{\tilde{\Omega}_\varepsilon}(q)\Capp(2\overline{B_1}\setminus\tilde{\Omega}_\varepsilon).
    \end{equation}
    To estimate the last expression, we need to use a lower bound for $\Capp(2\overline{B_1}\setminus\tilde{\Omega}_\varepsilon)$, but all our results from Lemma \ref{lema:densidad_acotada_cdc_local} are stated for the original domain $\Omega$, not for $\tilde{\Omega}_\varepsilon$. To solve this issue, the following geometric lemma helps.

    \begin{lema}\label{lema:comparacion_conjuntos_omega_tilde_y_omega}
    Under the assumptions above, it holds that
        \begin{equation*}
            \overline{B(P_1)}\setminus\Omega\subset2\overline{B_1}\setminus\tilde{\Omega}\subset2\overline{B_1}\setminus\tilde{\Omega}_\varepsilon.
        \end{equation*}
    \end{lema}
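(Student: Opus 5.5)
The plan is to prove the two inclusions separately. The second one, $2\overline{B_1}\setminus\tilde{\Omega}\subset2\overline{B_1}\setminus\tilde{\Omega}_\varepsilon$, is immediate: $\tilde{\Omega}_\varepsilon=\tilde{\Omega}\setminus\bigcup_{i\in I}\overline{B_i}\subset\tilde{\Omega}$, so complements reverse the inclusion. For the first inclusion, note that $B(P_1)\subset 29B(P_1)=B_1$, hence $\overline{B(P_1)}\subset\overline{B_1}\subset 2\overline{B_1}$. So it suffices to show $\overline{B(P_1)}\setminus\Omega\subset\mathbb{R}^{n+1}\setminus\tilde{\Omega}=K$. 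Since we reduced to $\Omega=\mathbb{R}^{n+1}\setminus\partial\Omega$, this means showing that every $y\in\overline{B(P_1)}\cap\partial\Omega$ belongs to $K$.

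Fix such a $y$. We first localize with the David--Mattila property \eqref{eq:cadena_inclsuion_cubos_bolas_DM}, namely $\partial\Omega\cap\tfrac{101}{100}B(P_1)\subset P_1$. Since $y\in\overline{B(P_1)}$, the open ball $U:=B(y,r(P_1)/100)$ lies in $\tfrac{101}{100}B(P_1)$. Therefore
\[
U\cap\partial\Omega\subset P_1 .
\]

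Next we show that no cube of $\operatorname{Bad}(R')$ meets $P_1$. Every $Q\in\operatorname{Bad}(R')$ lies in $\mathcal{D}_{\omega^p,k}(\tilde R)$ for some $\tilde R\in\mathcal{N}(R)$, so it has generation $j_0+k$. The cube $P_1=P_{Q_1}$ strictly contains a cube $Q_1$ of that generation, so $P_1$ has strictly smaller generation. By the nesting property of the lattice, each such $Q$ is then either contained in $P_1$ or disjoint from it. By definition, $\operatorname{Bad}(R')$ excludes the cubes contained in some $P\in\mathcal{G}(R')$, in particular those contained in $P_1$. Hence every $Q\in\operatorname{Bad}(R')$ satisfies $Q\cap P_1=\emptyset$.

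Combining the two previous steps gives $Q\cap U=\emptyset$, because $Q\subset\partial\Omega$ and $U\cap\partial\Omega\subset P_1$. Since $U$ is open, also $\overline{Q}\cap U=\emptyset$. Thus $U$ misses $\bigcup_{Q\in\operatorname{Bad}(R')}\overline{Q}$, and this holds regardless of how many bad cubes there are. In particular $y\in\partial\Omega\setminus\bigcup_{Q\in\operatorname{Bad}(R')}\overline{Q}\subset K$, as required.

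The only delicate point is the passage from disjointness of the cubes to disjointness of their closures. A bad cube adjacent to $P_1$ could a priori have $y$ in its closure. This is exactly why the argument uses the enlarged ball $\tfrac{101}{100}B(P_1)$ from the modified lattice: it provides an open neighbourhood of $y$ in which $\partial\Omega$ coincides with part of $P_1$. The generation comparison between $P_1$ and the cubes of $\operatorname{Bad}(R')$ is routine but should be checked explicitly.
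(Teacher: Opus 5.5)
Your proof is correct and follows essentially the same route as the paper. Both arguments rest on two facts. First, the enlarged inclusion $\partial\Omega\cap\frac{101}{100}B(P_1)\subset P_1$ gives an open neighbourhood of the point on which the boundary lies inside $P_1$. Second, the cubes of $\operatorname{Bad}(R')$ are disjoint from $P_1$ by nesting. The paper phrases this by contradiction, using a ball $B(z,\eta)$ that misses $\partial\Omega\setminus\bigcup_{Q\in\operatorname{Bad}(R')}\overline{Q}$, whereas you argue directly. Your explicit generation comparison between $P_1$ and the cubes of $\operatorname{Bad}(R')$ fills in the step the paper leaves implicit when it asserts that $S\cap P_1\neq\emptyset$ contradicts $S\in\operatorname{Bad}(R')$.
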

    
    Observe that this lemma, in combination with Lemma \ref{lema:densidad_acotada_cdc_local}, ensures that
    \begin{equation}\label{eqcapcap1}
    \Capp(2\overline{B_1}\setminus\tilde{\Omega}_\varepsilon)\geq 
    \Capp(\overline{B(P_1)}\setminus\Omega)\gtrsim_{n,m,C_0} r(P_1)^{n-1}.
    \end{equation}
    
    \begin{proof}[Proof Lemma \ref{lema:comparacion_conjuntos_omega_tilde_y_omega}]
        The second inclusion is immediate, since $\tilde{\Omega}_\varepsilon\subset\tilde{\Omega}$. For the first one, suppose that there exists some $z\in \overline{B(P_1)}\setminus\Omega$ such that $z\not\in2\overline{B_1}\setminus\tilde{\Omega}$. Since $B(P_1)\subset 2B_1$, this is equivalent to saying that $z\not\in\Omega$, and so $z\not\in\partial\tilde{\Omega}$. Thus there exists $\eta>0$ such that $B(z,\eta)\cap \left(\partial\Omega\setminus\bigcup_{Q\in\operatorname{Bad}(R')}\overline{Q}
        \right)=\emptyset$. Furthermore, recall $\Omega=\mathbb{R}^{n+1}\setminus\partial\Omega$, and so we have
        \begin{equation}\label{eq:frontera_omega_dentro_conjuntos_malos}
            \emptyset\neq B(z,\eta)\cap\partial\Omega\subset \bigcup_{Q\in\operatorname{Bad}(R')}\overline{Q}.
        \end{equation}
        Observe that by \eqref{eq:cadena_inclsuion_cubos_bolas_DM}, $z\in B(z,\eta)\cap \overline{B(P_1)}\cap \partial\Omega\subset B(z,\eta)\cap \frac{101}{100}B(P_1)\cap \partial\Omega\subset P_1$. Thus,  by \eqref{eq:frontera_omega_dentro_conjuntos_malos}, there exists a cube $S\in$ Bad$(R')$ such that $z\in \overline{S}$, which implies that $B(z,\eta)\cap \frac{101}{100}B(P_1)\cap\partial\Omega\cap S\neq\emptyset$, and in particular, $S\cap P_1\neq\emptyset$. This contradicts the fact that $S\in $Bad$(R').$
    \end{proof}

    From \eqref{eq:medida_armonica_8B_P_1} and \eqref{eqcapcap1}, we get
    \begin{equation*}
        \omega^p_{\tilde{\Omega}_\varepsilon}(8\overline{B_1})\gtrsim_{n,m,C_0}\dfrac{r(P_1)^n}{r(B_R)}\,G^p_{\Omega_\varepsilon}(q).
    \end{equation*}
    So by \eqref{eq:aprox_Green_epsilon_en_q} and Lemma \ref{lema:aproximacion_medida_con_dominios_WR_modificados}, taking $\varepsilon\to0$ we deduce that
    \begin{equation*}
        \omega^p_{\tilde{\Omega}}(8\overline{B_1})\gtrsim_{n,m,C_0}\dfrac{r(P_1)^n}{r(B_R)}G^p_{\Omega}(q)\gtrsim_n\dfrac{r(P_1)^n}{r(B_R)^n}\omega_\Omega^p(B_R),
    \end{equation*}
    or equivalently,
    \begin{equation}\label{eq:medida_armonica_8B_P_2}
     \frac{\omega^p_{\tilde{\Omega}}(8\overline{B_1})}{r(B_1)^n}\gtrsim_{n,m,C_0} \frac{\omega_\Omega^p(B_R)}{r(B_R)^n}.
     \end{equation}

   To conclude the proof of Hypothesis b), it remains to prove that $\omega_{\tilde{\Omega}}^p(8\overline{B_1})\lesssim \omega^p_{\Omega}(\overline{8B_1})$. To this end, we will need to relate $\omega_{\tilde{\Omega}}^p(8\overline{B_1})$ with $\omega^p_\Omega\left(\bigcup_{Q\in\operatorname{Bad}(R')}Q\right)$. Let $c=c(C_0,n,m)$ be the implicit constant in \eqref{eq:medida_armonica_8B_P_2}. Using that $\frac{r(P_1)^n}{r(B_R)^n}\geq29^{-n} C_0^{-n}A_0^{-\frac{3kn}{4}}$\!, we can choose  $\delta'$ small enough, depending on $k$ so that instead of \eqref{eq:medida_cubos_bad} (using \eqref{eqbad1**} and \eqref{eqbad2**}),  we get
  \begin{align*}
        \omega_\Omega^p\left( \bigcup_{Q\in\operatorname{Bad}_0(R')}Q \right)&\leq 
        C(n,C_0)\,(A_0^{-(n+1)k} + (\delta')^{1/2}) \,\omega_\Omega^p(R)\\
        &\leq 
        2^{-1}c^{-1}29^{-n} C_0^{-n}A_0^{-\frac{3kn}{4}}\omega_\Omega^p(R).
  \end{align*}  
    Then we obtain 
    \begin{equation}\label{eq:relacions_cubos_bad_medida_domino_no_WR}
        \omega^p_\Omega\left(\bigcup_{Q\in\operatorname{Bad}(R')}Q\right)
       \leq \omega_\Omega^p\left( \bigcup_{Q\in\operatorname{Bad}_0(R')}Q \right)         
        \leq\frac12\,\omega_{\tilde{\Omega}}^p(8\overline{B_1}).
    \end{equation}
    
    \begin{lema}\label{lema:acotacion_medida_bola_8B_dominio_no_WR_por_dominio_WR}
    Under the above assumptions, we have
        \begin{equation*}
            \omega_{\tilde{\Omega}}^p(8\overline{B_1})\leq\omega^p_{\Omega}(8\overline{B_1})+\omega^p_{\Omega}\left(\bigcup_{Q\in\operatorname{Bad}(R')}\overline{Q}\right).
        \end{equation*}
    \end{lema}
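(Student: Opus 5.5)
The plan is to compare the harmonic measures of $\Omega\subset\tilde\Omega$ through the Markov property of Lemma \ref{lema:Markov_property_harmonic_measure}. Since $\tilde\Omega$ need not be Wiener regular, I will work with the regularized sets $\Omega_\varepsilon\subset\tilde\Omega_\varepsilon$ built from the same family of balls $\overline{B_i}$. Then I pass to the limit $\varepsilon\to0$ using Lemma \ref{lema:aproximacion_medida_con_dominios_WR_modificados} for both $\Omega$ and $\tilde\Omega$. The statement is an approximation argument wrapped around a single identity.

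\textbf{Step 1 (identity).} Both $\Omega_\varepsilon$ and $\tilde\Omega_\varepsilon$ are Wiener regular, so every point of $\partial\Omega_\varepsilon\cap\partial\tilde\Omega_\varepsilon$ is regular for $\tilde\Omega_\varepsilon$. Lemma \ref{lema:Markov_property_harmonic_measure} with $A=8\overline{B_1}\cap\partial\tilde\Omega_\varepsilon$ gives
\[
\omega^p_{\tilde\Omega_\varepsilon}(A)=\omega^p_{\Omega_\varepsilon}(A)+\int_{\partial\Omega_\varepsilon\setminus\partial\tilde\Omega_\varepsilon}\omega^y_{\tilde\Omega_\varepsilon}(A)\,d\omega^p_{\Omega_\varepsilon}(y)\leq \omega^p_{\Omega_\varepsilon}(8\overline{B_1})+\omega^p_{\Omega_\varepsilon}(\partial\Omega_\varepsilon\setminus\partial\tilde\Omega_\varepsilon).
\]

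\textbf{Step 2 (locating the extra boundary).} I claim that $\partial\Omega_\varepsilon\setminus\partial\tilde\Omega_\varepsilon\subset\bigcup_{Q\in\operatorname{Bad}(R')}\overline Q$. By Lemma \ref{lema:modificacion_dominios_no_WR}(iv), a point of $\partial\Omega_\varepsilon$ lies either on some $\partial\overline{B_i}$ or in $\partial\Omega\setminus\bigcup_i\overline{B_i}$. The points on the spheres $\partial\overline{B_i}$ that remain in $\partial\Omega_\varepsilon$ but not in $\partial\tilde\Omega_\varepsilon$ must be handled with the closedness statement (iii). I expect that, up to a set of $\omega_{\Omega_\varepsilon}$-measure tending to $0$, these points are controlled by item (vi). Item (vi) gives $\omega^p_{\Omega_\varepsilon}(\bigcup 2\overline{B_i})\lesssim\varepsilon$ for the same balls, with the same proof, since $d(p,\partial\Omega_\varepsilon)$ is bounded below uniformly. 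For the remaining points $x\in\partial\Omega\setminus\partial\tilde\Omega$, we have $x\notin K$. Since $\partial\Omega\setminus\bigcup_{\operatorname{Bad}}\overline Q\subset K$, it follows that $x\in\bigcup_{Q\in\operatorname{Bad}(R')}\overline Q$.

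\textbf{Step 3 (limit).} I let $\varepsilon\to0$ in
\[
\omega^p_{\tilde\Omega_\varepsilon}(8\overline{B_1})\le \omega^p_{\Omega_\varepsilon}(8\overline{B_1})+\omega^p_{\Omega_\varepsilon}\Bigl(\bigcup_{Q\in\operatorname{Bad}(R')}\overline Q\Bigr)+C\varepsilon .
\]
Lemma \ref{lema:aproximacion_medida_con_dominios_WR_modificados} applies to $\tilde\Omega$ with exactly this family of balls, so the left side converges to $\omega^p_{\tilde\Omega}(8\overline{B_1})$. For the right side, the same approximation arguments of \cite[Chapter 6]{HM} give $\omega^p_{\Omega_\varepsilon}(F)\to\omega^p_\Omega(F)$ for Borel $F\subset\partial\Omega$. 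This is the kind of limit the paper already invoked for $G^p_{\Omega_\varepsilon}$. The key point is that $\Omega$ is Wiener regular, so removing balls of small capacity perturbs $\omega_\Omega$ only slightly. Both sets $8\overline{B_1}\cap\partial\Omega$ and $\bigcup\overline Q$ are Borel subsets of $\partial\Omega$, up to the part of $8\overline{B_1}$ on the spheres, which is again $O(\varepsilon)$.

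\textbf{Main obstacle.} I expect the hardest step to be justifying the convergence $\omega^p_{\Omega_\varepsilon}\to\omega^p_\Omega$ when the balls come from the construction for $\tilde\Omega$ rather than for $\Omega$. My plan is to write $\omega^p_\Omega$ through the Markov identity for $\Omega_\varepsilon\subset\Omega$. That identity holds because $\Omega$ is regular. It gives $|\omega^p_\Omega(F)-\omega^p_{\Omega_\varepsilon}(F)|\le\omega^p_{\Omega_\varepsilon}(\bigcup_i\partial\overline{B_i})\lesssim\varepsilon$, which would close the argument.
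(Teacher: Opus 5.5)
Your architecture is the same as the paper's. You apply the Markov identity of Lemma~\ref{lema:Markov_property_harmonic_measure} to $\Omega_\varepsilon\subset\tilde\Omega_\varepsilon$, place the extra boundary $\partial\Omega_\varepsilon\setminus\partial\tilde\Omega_\varepsilon$ inside $\bigcup_{Q\in\operatorname{Bad}(R')}\overline Q$, and let $\varepsilon\to0$.

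\textbf{The gap is in Step~2.} That is where the lemma's real content lies, and there the inclusion is asserted rather than proved. Take the points of $\partial\Omega_\varepsilon\setminus\partial\tilde\Omega_\varepsilon$ lying in $\partial\Omega\setminus\bigcup_i\overline{B_i}$. You simply relabel them as ``the remaining points $x\in\partial\Omega\setminus\partial\tilde\Omega$'', but nothing you wrote shows that such an $x$ is not in $\partial\tilde\Omega$. Note that $x\notin\bigcup_i\overline{B_i}$ does not give $x\notin\overline{\bigcup_i\overline{B_i}}$, since the family may be infinite, so you cannot just localize away from the balls. The points on the spheres you handle only by an ``expected'' $O(\varepsilon)$ bound borrowed from (vi). However, Lemma~\ref{lema:modificacion_dominios_no_WR} states (vi) for $\tilde\Omega_\varepsilon$, not for $\Omega_\varepsilon$.

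\textbf{The missing idea is a short point-set argument that removes both issues.} Let $V=\mathbb{R}^{n+1}\setminus\overline{\bigcup_i\overline{B_i}}$, which is open; by (iii), $\Omega_\varepsilon=\Omega\cap V$ and $\tilde\Omega_\varepsilon=\tilde\Omega\cap V$.
\begin{itemize}
\item If $x\in\partial\Omega_\varepsilon\setminus\partial\tilde\Omega_\varepsilon$, then $x\in\overline{\Omega_\varepsilon}\subset\overline{\tilde\Omega_\varepsilon}$ and $x\notin\partial\tilde\Omega_\varepsilon$, so $x\in\tilde\Omega_\varepsilon$.
\item Hence $x\in V$. So $x$ lies on no sphere $\partial\overline{B_i}$, and no $O(\varepsilon)$ correction is needed.
\item Since $V$ is an open neighbourhood of $x$ on which $\Omega_\varepsilon$ agrees with $\Omega$, we get $x\in\partial\Omega$.
\item Finally, $x\in\tilde\Omega$ gives $x\notin\partial\tilde\Omega$.
\end{itemize}
Thus $\partial\Omega_\varepsilon\setminus\partial\tilde\Omega_\varepsilon\subset\partial\Omega\setminus\partial\tilde\Omega\subset\bigcup_{Q\in\operatorname{Bad}(R')}\overline Q$ holds exactly.

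\textbf{Your Step~3 device is sound and worth keeping.} The Markov identity for $\Omega_\varepsilon\subset\Omega$ is legitimate, since $\Omega$ is Wiener regular. Because the integral term is nonnegative, it gives $\omega^p_{\Omega_\varepsilon}(F)\le\omega^p_\Omega(F)$ outright for every Borel $F\subset\partial\Omega$. Now take $A=8\overline{B_1}\cap\partial\tilde\Omega\cap\partial\tilde\Omega_\varepsilon$ in Step~1, as the paper does. Then every set on the right is a subset of $\partial\Omega$, so the right-hand side needs no limit and no smallness estimate. Only the left side requires Lemma~\ref{lema:aproximacion_medida_con_dominios_WR_modificados}, applied to $\tilde\Omega$. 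This is more careful than the paper, which cites that lemma for $\Omega_\varepsilon$ too, even though those balls come from the construction for $\tilde\Omega$.
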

    \begin{proof}
        We claim that, for every $\varepsilon>0$,
        \begin{equation}\label{eq:acotacion_medida_bola_8B_dominios_modificados}        \omega_{\tilde{\Omega}_\varepsilon}^p(8\overline{B_1}\cap\partial\tilde{\Omega})\leq\omega^p_{\Omega_\varepsilon}(8\overline{B_1}\cap\partial\Omega)+\omega^p_{\Omega_\varepsilon}\left(\bigcup_{Q\in\operatorname{Bad}(R')}\overline{Q}\right),
        \end{equation}
        From this inequality and Lemma \ref{lema:aproximacion_medida_con_dominios_WR_modificados} we get the desired result.
        
        To prove \eqref{eq:acotacion_medida_bola_8B_dominios_modificados} we use the Markov type property from Lemma \ref{lema:Markov_property_harmonic_measure}:
        \begin{equation*}
            \begin{aligned}
                \omega^p_{\tilde{\Omega}_\varepsilon}(8\overline{B_1}\cap\partial\tilde{\Omega})&=\omega^p_{{\Omega}_\varepsilon}(8\overline{B_1}\cap\partial\tilde{\Omega}\cap\partial\tilde{\Omega}_\varepsilon)+\int_{\partial\Omega_\varepsilon\setminus\partial\tilde{\Omega}_\varepsilon} \omega^y_{\tilde{\Omega}_\varepsilon}(8\overline{B_1}\cap\partial\tilde{\Omega})\,d\omega_{\Omega_\varepsilon}^p(y)\\
                &\leq\omega^p_{\Omega_\varepsilon}(8\overline{B_1}\cap\partial\Omega)+\omega^p_{\Omega_\varepsilon}(\partial\Omega_\varepsilon\setminus\partial\tilde{\Omega}_\varepsilon).
            \end{aligned}
        \end{equation*}
        So, to conclude the proof of \eqref{eq:acotacion_medida_bola_8B_dominios_modificados} it suffices to show that
        \begin{equation*}               \partial\Omega_{\varepsilon}\setminus\partial\tilde{\Omega}_\varepsilon\subset\partial\Omega\setminus\partial\tilde{\Omega}\subset\bigcup_{Q\in\operatorname{Bad}(R')}\overline{Q}.
        \end{equation*}
        The inclusion $\partial\Omega\setminus\partial\tilde{\Omega}\subset\bigcup_{Q\in\operatorname{Bad}(R')}\overline{Q}$ is trivial from the definition of $\partial\tilde{\Omega}$. To show the first one, consider the open set $V=\mathbb{R}^{n+1}\setminus
        \overline{\bigcup_{i\in I_\varepsilon}\overline{B_i}}$, so that 
        \begin{equation*}
        \Omega_\varepsilon = \Omega\cap V\quad \mbox{ and }\quad\tilde\Omega_\varepsilon = \tilde\Omega\cap V.
        \end{equation*}
        Take $x\in \partial\Omega_{\varepsilon}\setminus\partial\tilde{\Omega}_\varepsilon =
        \partial(\Omega\cap V)\setminus\partial(\tilde\Omega\cap V)$.
        Since $\Omega\cap V\subset \tilde\Omega\cap V$, it holds that
        $\overline{\Omega\cap V}\subset \overline{\tilde\Omega\cap V}$. From the fact that $x\not \in \partial(\tilde\Omega\cap V)$, it follows that $x\in \tilde\Omega\cap V$. In particular, $x\in V$. Together with the fact that $x\in \partial(\Omega\cap V)$, this implies that
        $x\in\partial\Omega$. On the other hand, since $x\in\tilde\Omega$, we have that $x\not\in\partial\tilde\Omega$. Altogether, we have $x\in \partial\Omega\setminus \partial\tilde\Omega$.
     \end{proof}

    Observe that \eqref{eq:small_boundary_DM} implies that 
    \begin{equation*}
        \omega^p_\Omega\left( \bigcup_{Q\in\operatorname{Bad}(R')}\overline{Q} \right)=\omega^p_\Omega\left( \bigcup_{Q\in\operatorname{Bad}(R')}Q \right).
    \end{equation*} 
    Then, by Lemma \ref{lema:acotacion_medida_bola_8B_dominio_no_WR_por_dominio_WR} and \eqref{eq:relacions_cubos_bad_medida_domino_no_WR}, we get
    \begin{equation*}
        \omega_{\tilde{\Omega}}^p(8\overline{B_1})\leq 2\,\omega^p_{\Omega}(8\overline{B_1}),
    \end{equation*}
    which together with \eqref{eq:medida_armonica_8B_P_2} and the doubling property of $P_1$ yields
\begin{equation*}
    \frac{\omega^p_{\Omega}(B_1)}{r(B_1)^n}\approx\frac{\omega^p_{\Omega}(8\overline{B_1})}{r(B_1)^n}\gtrsim_{n,m,C_0} \frac{\omega_\Omega^p(B_R)}{r(B_R)^n}.
    \end{equation*}

\begin{proof}[Proof of Theorem \ref{teo:main_result}]
    Suppose, aiming for a contradiction, that $\omega^p_\Omega(\mathcal{O})>0$. Then we can find a sufficiently small ball $B_R$ centered at $\partial\Omega$ satisfying the assumptions of Theorem \ref{teo:main1}. This guarantees the existence of an $n$-rectifiable set $\Gamma$ and a constant $\tau>0$ such that 
    \begin{equation*}
        \omega^p_\Omega(\Gamma\cap 2B_R)\geq\tau\,\omega^p_\Omega(B_R).
    \end{equation*}
    On the other hand, by the doubling property of $B_R$, we have $\omega^p_\Omega(2B_R\setminus E_{m,\beta})<\delta'\omega^p_\Omega(2B_R)\leq C_0\delta'\omega_\Omega^p(B_R)$ for a fixed constant $C_0>0$. Thus, choosing $\delta'>0$ sufficiently small so that $C_0\delta'<\tau$, we obtain
    \begin{equation*}
        \omega^p_\Omega(\Gamma\cap E_{m,\beta}\cap 2B_R)\geq \omega^p_\Omega(\Gamma\cap 2B_R) - \omega^p_\Omega(2B_R\setminus E_{m,\beta}) \geq (\tau-C_0\delta')\omega^p_\Omega(B_R)>0.
    \end{equation*}
    Since $E_{m,\beta}\subset Z$, this yields $\omega^p_\Omega(\Gamma\cap Z)>0$, which directly contradicts the fact that $\omega^p_\Omega(\Gamma\cap Z)=0$. Consequently, $\omega^p_\Omega(\mathcal{O})=0$.
\end{proof}

\vspace{5mm}

\noindent Luis Lloret\\
Departament de Matem\`atiques\\
Universitat Aut\`onoma de Barcelona\\
08193 Bellaterra, Barcelona, Catalonia\\
\textit{E-mail}: Luis.Lloret@uab.cat

\vspace{5mm}
\noindent Xavier Tolsa\\
ICREA (Barcelona), Departament de Matem\`atiques (Universitat Aut\`onoma de Barcelona), and Centre de Recerca Matem\`atica, Bellaterra, Catalonia\\
\textit{E-mail}: Xavier.Tolsa@uab.cat\\


\begin{thebibliography}{99}

\bibitem{AMT}
{\sc J. Azzam, M. Mourgoglou, and X. Tolsa}, \textit{Mutual absolute continuity of interior and exterior harmonic measure implies rectifiability}, Comm. Pure Appl. Math. \textbf{70} (2017), no. 11, 2121--2163.

\bibitem{AMTV}
{\sc J. Azzam, M. Mourgoglou, X. Tolsa, and A. Volberg}, \textit{On a two-phase problem for harmonic measure in general domains}, Amer. J. Math. \textbf{141} (2019), no. 5, 1259--1279.

\bibitem{B}
{\sc J. Bourgain}, \textit{On the Hausdorff dimension of harmonic measures in higher dimension}, Invent. Math. \textbf{87} (1987), no. 3, 477--483.

\bibitem{DM}
{\sc G. David and P. Mattila}, \textit{Removable sets for Lipschitz harmonic functions in the plane}, Rev. Mat. Iberoam. \textbf{16} (2000), no. 1, 137--215.


\bibitem{JW}
{\sc P.W. Jones and T.H. Wolff}, \textit{Hausdorff dimension of harmonic measures in the plane}, Acta Math. \textbf{161} (1988), no. 1--2, 131--144.

\bibitem{La} {\sc N.S. Landkof}, \textit{Foundations of Modern Potential Theory}, Springer-Verlag, Berlin, 1972.


\bibitem{MAT}
{\sc P. Mattila}, \textit{Geometry of Sets and Measures in Euclidean Spaces}, Cambridge Studies in Advanced Mathematics, vol. 44, Cambridge University Press, Cambridge, 1995.

\bibitem{NTV}
{\sc F. Nazarov, S. Treil, and A. Volberg}, \textit{The $T(b)$-theorem on non-homogeneous spaces that proves a conjecture of Vitushkin}, preprint arXiv:1401.2479 (2014).

\bibitem{HM}
{\sc M. Prats and X. Tolsa}, \textit{Harmonic Measure in Euclidean Spaces}, lecture notes, available at \url{https://mat.uab.es/~xtolsa/mesuraharmonica.pdf}.

\bibitem{T}
{\sc X. Tolsa}, \textit{New criteria for the rectifiability of Radon measures in terms of Riesz transforms}, preprint arXiv:2512.14534v2.

\bibitem{T1}
{\sc X. Tolsa}, \textit{Growth estimates for Cauchy integrals of measures and rectifiability}, Geom. Funct. Anal. \textbf{17} (2007), no. 2, 605--643.

\bibitem{CZ-T}
{\sc X. Tolsa}, \textit{Analytic Capacity, the Cauchy Transform, and Non-Homogeneous Calder\'on--Zygmund Theory}, Progress in Mathematics, vol. 307, Birkh\"auser/Springer, Cham, 2014.

\bibitem{W}
{\sc T.H. Wolff}, \textit{Counterexamples with harmonic gradients in $\mathbb{R}^3$}, Essays on Fourier Analysis in Honor of Elias M. Stein (Princeton, NJ, 1991), Princeton Math. Ser., vol. 42, Princeton Univ. Press, Princeton, NJ, 1995, pp. 321--384.

\bibitem{W2}
{\sc T.H. Wolff}, \textit{Plane harmonic measures live on sets of $\sigma$-finite length}, Ark. Mat. \textbf{31} (1993), no. 1, 137--172.
\end{thebibliography}
\end{document}